\documentclass[11pt]{article}

\usepackage[colorlinks=true,bookmarks=true,linkcolor=blue,urlcolor=blue,citecolor=blue,breaklinks=true]{hyperref}
\usepackage{breakcites}

\usepackage{doi}

\usepackage{amsmath}
\usepackage{amsfonts}
\usepackage{amsthm}
\usepackage{amssymb}

\usepackage{mathtools}

\usepackage{xcolor}
\usepackage{bm} 
\usepackage{dsfont} 

\usepackage{graphicx}
\usepackage{tikz-cd}
\usepackage{tikz}
\definecolor{mycolor1}{rgb}{0.105882,0.619608,0.466667}
\definecolor{mycolor2}{rgb}{0.85098,0.372549,0.00784314}
\definecolor{mycolor3}{rgb}{0.458824,0.439216,0.701961}
\definecolor{mycolor4}{rgb}{0.905882,0.160784,0.541176}
\definecolor{mycolor5}{rgb}{0.4,0.65098,0.117647}
\definecolor{mycolor6}{rgb}{0.65098,0.462745,0.113725}
\definecolor{mycolor7}{rgb}{0.901961,0.670588,0.00784314}
\definecolor{mycolor8}{rgb}{0.4,0.4,0.4}
\definecolor{mycolor9}{rgb}{0.301961,0,0.294118}
\definecolor{mycolor10}{rgb}{0.0313725,0.25098,0.505882}

\usepackage{psfrag}

\usepackage{algorithm}
\usepackage{algpseudocode}
\algdef{SE}[DOWHILE]{Do}{doWhile}{\algorithmicdo}[1]{\algorithmicwhile\ #1}%

\usepackage{mathrsfs} 

\DeclareMathOperator*{\argmin}{arg\,min}

\newcommand{\spann}{\mathrm{span}}

\newcommand{\Exp}{\mathrm{Exp}}
\newcommand{\Log}{\mathrm{Log}}

\newcommand{\Retr}{\mathrm{R}}
\newcommand{\inj}{\mathrm{inj}}

\newcommand{\T}{\mathrm{T}}

\newcommand{\Rd}{{\mathbb{R}^{d}}}

\newcommand{\reals}{{\mathbb{R}}}

\newcommand{\Rn}{{\mathbb{R}^n}}

\newcommand{\grad}{\mathrm{grad}}
\newcommand{\Hess}{\mathrm{Hess}}

\newcommand{\D}{\mathrm{D}}

\newcommand{\calF}{\mathcal{F}}

\newcommand{\calM}{\mathcal{M}}

\newcommand{\rank}{\operatorname{rank}}

\newcommand{\TODOF}[1]{}

\newcommand{\dist}{\mathrm{dist}}

\newcommand{\graph}{\mathrm{graph}}
\newcommand{\Eu}{E_{\mathrm{u}}}
\newcommand{\Ecs}{E_{\mathrm{cs}}}
\newcommand{\Iu}{I_{\mathrm{u}}}
\newcommand{\Ics}{I_{\mathrm{cs}}}
\newcommand{\Wcsloc}{W_{\mathrm{cs}}^\mathrm{loc}}
\newcommand{\Wcsloctilde}{\widetilde{W}_{\mathrm{cs}}^\mathrm{loc}}

\newcommand{\Lip}{\mathrm{Lip}}
\newcommand{\TEu}{T\vert_{\Eu}}
\newcommand{\pcs}{p_{\mathrm{cs}}}
\newcommand{\pu}{p_{\mathrm{u}}}
\newcommand{\gcs}{g_{\mathrm{cs}}}
\newcommand{\gu}{g_{\mathrm{u}}}
\newcommand{\Tcs}{T_{\mathrm{cs}}}
\newcommand{\Tu}{T_{\mathrm{u}}}

\newcommand{\hyvarphi}{h_y^\varphi}
\newcommand{\Fix}{\mathrm{Fix}}

\newcommand{\positive}{(0, \infty)} 

\newtheorem{theorem}{Theorem}[section]
\newtheorem{lemma}[theorem]{Lemma}
\newtheorem{proposition}[theorem]{Proposition}

\newtheorem{definition}[theorem]{Definition} 
\newtheorem{remark}[theorem]{Remark}

\usepackage[lmargin=3cm,rmargin=3cm,bottom=3cm,top=3cm]{geometry}
\usepackage[ansinew]{inputenc}
\usepackage[round,compress]{natbib}

\usepackage{sectsty}
\makeatletter\def\@seccntformat#1{\protect\makebox[0pt][r]{\csname the#1\endcsname\hspace{12pt}}}\makeatother

\usepackage{subcaption}

\usepackage{enumitem} 

\usepackage[normalem]{ulem} 

\usepackage{multicol}
\usepackage{multirow}

\usepackage{blindtext} 

\newcommand{\aref}[1]{\hyperref[#1]{A\ref{#1}}}
\newtheorem{assumption}{A\ignorespaces} 

\usepackage{xcolor}
\usepackage{tikz}
\usepackage{comment}

\usepackage{etoolbox}
\newbool{BanachAppendix}
\setbool{BanachAppendix}{false} 

\title{A non-autonomous center-stable set theorem\\for saddle avoidance in optimization}

\author{
  Andreea-Alexandra Mu\c{s}at and Nicolas Boumal\thanks{Ecole Polytechnique F\'ed\'erale de Lausanne (EPFL), Institute of Mathematics,
      \texttt{\{andreea.musat, nicolas.boumal\}@epfl.ch}.}
}
\date{\today}

\begin{document}

\maketitle

\begin{abstract}
Optimization algorithms are unlikely to converge to strict saddle points.
Proofs to that effect rely on the Center-Stable Manifold Theorem (CSMT), casting algorithms as dynamical systems: $x_{k+1} = g_k(x_k)$.
In its standard form, the CSMT is limited to autonomous systems (the maps $g_k$ are all the same).
To study algorithms such as gradient descent with non-constant step-size schedules, we need a non-autonomous CSMT.
There are a few, but they are unable to handle, for example, vanishing step sizes.
To cover such scenarios, we establish a new Center-Stable Set Theorem (CSST) for non-autonomous systems.
We use it to prove saddle avoidance for gradient descent (Euclidean and Riemannian) and for the proximal point method, without assuming Lipschitz gradients or isolated saddles, and allowing vanishing step sizes.
\end{abstract}

\noindent \textbf{Keywords:} Hadamard--Perron, pseudo-hyperbolic, nonconvex optimization.

\section{Introduction}

Empirically, unconstrained optimization algorithms tend to avoid the strict saddle points of the objective function $f \colon \reals^d \to \reals$, that is, they generically do not converge to critical points where the Hessian has at least one negative eigenvalue.

A standard way to study this behavior is to view the algorithm as a discrete-time dynamical system: a map $g \colon \reals^d \to \reals^d$ that produces the next iterate as $x_{k+1} = g(x_k)$.
This perspective is useful because it enables the use of tools from dynamical systems theory, such as the \emph{center-stable manifold theorem} (CSMT) \citep[Thm.~5.1]{hirsch1977invariant}, \citep[Thm.~III.7]{shub1987book}.

This encompasses algorithms such as gradient descent (GD) with a constant step size $\alpha > 0$, which indeed uses a \emph{single update map} $g(x) = x - \alpha \nabla f(x)$, applied at all times $k$.
This \emph{autonomous} setting is well understood and forms the basis of most saddle avoidance results.

Some algorithms, however, are \emph{non-autonomous}: the update rule changes with time.
Explicitly, there is a predetermined sequence of maps $g_0, g_1, \dots$ and iterates satisfy $x_{k+1} = g_k(x_k)$.
For example, GD with vanishing step sizes fits into this framework: the iteration maps are $g_k(x) = x - \alpha_k \nabla f(x)$, where the step sizes $\alpha_k > 0$ are fixed ahead of time (no line-search) and converge to zero.

\cite{panageas2019first} argue that GD with vanishing step sizes $\alpha_k = \Omega(\frac{1}{k})$ avoids strict saddles for generic initial points, under the assumptions that all saddles are isolated
and all step sizes are small enough so that each map $g_k$ is a diffeomorphism (see also \emph{Related work}).
We set out to allow for non-isolated saddles and arbitrarily large step sizes (removing any Lipschitz gradient assumptions).
The plan is to do so by developing a general theorem for non-autonomous dynamical systems first, and applying it to optimization second.

Since the CSMT does not apply in the setting of GD with vanishing steps, \cite{panageas2019first} proposed a tailored theorem to claim that, locally, the set of initial points converging to an isolated strict saddle has measure zero.
In this setting, a tailored version of the CSMT is indeed required.
While there exist non-autonomous extensions of the CSMT that handle sequences of iteration maps~\citep[Thm.~6.2.8]{katok1995introduction}, these results impose several technical assumptions on the maps.
In particular, they require that all linearizations of the iteration maps contract and expand uniformly, with the same constants, on fixed subspaces.
GD with vanishing step sizes does not satisfy these conditions: as the step size converges to zero, the iteration maps approach identity, as do their differentials: no uniform expansion or contraction constants can exist in this setting.

Our main contributions are the following:
\begin{enumerate}[leftmargin=0pt]
    \item We first develop a non-autonomous center-stable \emph{set}\footnote{The autonomous CSMT also establishes that the center-stable \emph{set} is a \emph{manifold}.
    In our case, it is unclear if the set has higher regularity; however, for our purposes, it only matters that it has measure zero, as shown.}
    theorem (Theorem~\ref{theorem:csmt-general}) for sequences of iteration maps whose linearizations contract and expand along fixed subspaces, but not necessarily with uniform constants across the sequence.
    The theorem shows that, locally, the set of initial conditions that yield convergence to special unstable fixed points (Definition~\ref{def:unstable-fixed-point}) has measure zero.
    We extend it to systems on manifolds in Section~\ref{subsec:riemannian-reduction}.
    \item We then show that for optimization algorithms such as GD (both on Euclidean spaces and Riemannian manifolds) and the proximal point method (PP) with vanishing step-size schedules (among others), the strict saddles of $f$ correspond indeed to such unstable fixed points, so the theorem (together with a globalization argument) implies that these algorithms avoid strict saddle points (Theorems~\ref{thm:gd-vanishing-theorem}, \ref{thm:RGDmain} and~\ref{thm:pp-avoidance}).
    In particular, our results do not require the saddles to be isolated, and (for GD) the initial step sizes can be arbitrarily large.
\end{enumerate}

\subsection*{Proof technique}

Our proof builds on that of the autonomous CSMT by \citet{hirsch1977invariant}, which uses the graph transform---a method that goes back to Hadamard and Perron (see \emph{Related work} below).
In this standard setting where $x_{k+1} = g(x_k)$, for each fixed point $x^\ast$, the argument produces a single function $\varphi$ whose graph (a subset of $\reals^d$) contains $x^\ast$ and is invariant under $g$, that is, $g(\graph(\varphi)) \subseteq \graph(\varphi)$.
It is designed so that, if a trajectory $(x_k)_{k \geq 0}$ converges to $x^\ast$, then there exists some $\bar{k}$ such that $x_k \in \graph(\varphi)$ for all $k \ge \bar{k}$. This shows that $x^\ast$ can only be approached along this single smooth set.
The function $\varphi$ is constructed as the fixed point of a map $\Gamma$ on a space of functions (the ``graph transform'').
The graph of $\varphi$ is the so-called center-stable manifold of $x^\ast$.
If $x^\ast$ is an unstable fixed point, the graph has positive codimension and hence measure zero.

The non-autonomous case differs in a fundamental way: the iteration map changes with time, so we cannot expect a single fixed function $\varphi$ to describe the invariant set.
Instead, we construct a sequence of Lipschitz functions $\varphi_0, \varphi_1, \dots$ such that $g_k(\graph(\varphi_k)) \subseteq \graph(\varphi_{k+1})$.
In analogy to the autonomous case, we design them so that, if a trajectory $(x_k)_{k \geq 0}$ converges to $x^\ast$, then there exists some $\bar{k}$ such that $x_k \in \graph(\varphi_k)$ for all $k \geq \bar{k}$.
Unlike before, the approach to $x^\ast$ is not constrained to any single graph; rather, it consists of jumps through a sequence of graphs.
The functions $\varphi_k$ are Lipschitz, hence their graphs have measure zero: this is key to the avoidance claim.

We present the essential building blocks of the proof of the autonomous CSMT in Section~\ref{section:graph-transform-autonomous}, and we repurpose them for the non-autonomous case in Section~\ref{section:graph-transform-nonautonomous}.
The argument relies on a potential function $V_{\varphi_k}$ that measures the distance to $\graph(\varphi_k)$.
This potential explodes along trajectories that are not confined to those graphs.
Therefore, trajectories that remain near an unstable fixed point (and hence whose potential is bounded) must lie on the graphs.

Our non-autonomous center-stable set theorem provides a \emph{local} saddle avoidance result. 
We globalize it with an argument based on the Luzin $N^{-1}$ property (Definition~\ref{def:luzin}).

This all results in Theorem~\ref{thm:global-avoidance-nonautonomous}, which is a general statement about dynamical systems.
To obtain statements about optimization algorithms, it then suffices to verify that the resulting iteration maps meet the hypotheses of that theorem: see Sections~\ref{section:gd}, \ref{sec:extension-manifolds} and~\ref{section:pp}.

\subsection*{Related work}

\paragraph{The standard CSMT and some variants.}  
The study of invariant manifolds has its roots in classical work on differential equations \citep{poincare1881memoire}.
Subsequent rigorous analyses near equilibria were developed by \citet{hadamard1901iteration} (see \citep{hasselblatt2017iteration} for a translation) and \citet{perron1929stabilitat}.
A comprehensive treatment for discrete-time systems was later given by \citet{hirsch1977invariant}.
These underpin the modern analyses of local dynamics near strict saddle points in optimization.
For a more detailed historical overview, see the endnotes of \citep[Ch.~6]{kloeden2011nonautonomous} and \citep[Ch.~1]{hirsch1977invariant}.
We are most interested in non-autonomous versions of the CSMT: we found a few, including work by \citep[Thm.~6.2.8]{katok1995introduction} and \citep[\S3.4]{dyatlov2018notes}, but they are not applicable in our case.

\paragraph{Applications in optimization.}  
Several works study how optimization algorithms avoid strict saddle points. 
\citet{goudou2009gradient} showed that a continuous-time heavy-ball flow generically converges to local minima of $C^2$ coercive Morse functions.
A more recent line of work showed that gradient descent with suitable step sizes avoids strict saddles \citep{lee2016gradient,panageas2017gradient, schaeffer2020extending}.
This was later extended to other methods, including coordinate descent, block coordinate descent, mirror descent, Riemannian gradient descent \citep{lee2019strictsaddles}, accelerated methods \citep{o2019behavior}, alternating minimization \citep{li2019alternating} and non-smooth settings \citep{davis2019active, cheridito2024gradient}.

Certain non-autonomous settings have also been studied.
Closest to us, \citet{panageas2019first} argue that GD and PP with vanishing step sizes avoid \emph{isolated} strict saddles if \emph{all} step sizes are small enough.\footnote{Earlier versions of their paper claim the non-isolated case too. In communication with the authors, they confirmed that their proof does not encompass that setting.} 
\citet{schaeffer2020extending} also analyzed GD with non-constant step sizes, assuming either a contractive $C^1$ update rule $\alpha_{k+1} = h(\alpha_k)$ with \emph{a unique fixed point} $\alpha^\ast > 0$, or a piecewise-constant sequence with finitely many jumps.
However, their result too requires that $\nabla f$ is globally $L$-Lipschitz and that $\alpha_k L \in (0, 2)$ for all $k$.
In particular, the contractive-update setting does not cover vanishing step sizes, which correspond to a fixed point $\alpha^\ast = 0$.

For an example of GD where the step sizes are adaptive (not pre-set), see our analysis of a modified line-search method in \citep{musat2025saddlelinesearch}.
There, different initial points $x_0$ may result in different step-size sequences, which calls for different techniques.

\subsection*{Notation}

Let $(X,d_X)$ and $(Y,d_Y)$ be metric spaces.
For a map $g \colon X \to Y$ and a subset $U \subseteq X$, we write $g\vert_U$ for the restriction of $g$ to $U$ and define the Lipschitz constant of $g\vert_U$ (in $[0, +\infty]$) as 
\begin{align*}
    \Lip(g\vert_U) \coloneqq
    \sup_{\substack{x,y \in U\\ x \neq y}} \frac{ d_Y(g(x),g(y)) }{ d_X(x,y) }.
\end{align*}
Following standard notation, $\reals^d = V \oplus W$ means $V, W$ are two linear subspaces of $\reals^d$ and each vector $ x \in \reals^d $ can be written uniquely as $x = v + w$ with $v \in V$ and $w \in W$.
Given $\varphi \colon V \to W$, we define the \emph{graph of $\varphi$} as 
\begin{align}
    \graph(\varphi) \coloneqq
    \{ v + \varphi(v) \mid v \in V \} =
    \{ x \in V \oplus W \mid x = v + w \textrm{ with } w = \varphi(v) \}.
    \label{eq:graphnotation}
\end{align}
At times, we also write $x = (v, w)$ to mean $x = v + w$ with $v \in V$ and $w \in W$, so that $x$ is in $\graph(\varphi)$ exactly if $x = (v, \varphi(v))$.
In a metric space (usually $\Rd$ with some norm provided by context), $B_r(x)$ denotes the open ball of radius $r$ around $x$.

\section{A non-autonomous avoidance theorem for unstable fixed point}

In this section, we establish conditions under which a non-autonomous dynamical system avoids certain unstable fixed points.
We first present the main result: a non-autonomous center-stable \emph{set} theorem, which provides a local avoidance guarantee.
Then, we combine it with the Luzin $N^{-1}$ property (Definition~\ref{def:luzin}) to extend the conclusion globally.
In later sections, we prove these theorems and apply them to optimization algorithms.

\subsection{A local result}

In this subsection, we establish the local component of our avoidance theorem.
We build toward the notion of \emph{non-uniform pseudo-hyperbolicity} and present the main result, showing that trajectories that remain near an unstable fixed point must lie on a family of (measure zero) invariant graphs.
Two later sections are dedicated to the proof.
We start with some notions used throughout.

\begin{definition}
    A (discrete-time) \emph{non-autonomous dynamical system} on $\reals^d$ is a sequence of maps $\big( g_k \colon \reals^d \to \reals^d \big)_{k \geq 0}$.
    For a given initial condition $x_0 \in \reals^d$, the corresponding \emph{trajectory} is the sequence $(x_k)_{k \geq 0}$ defined by $x_{k+1} = g_k(x_k)$, $k \geq 0$.
\end{definition}

\begin{definition}
    A point $x^\ast \in \reals^d$ is a \emph{fixed point} of a non-autonomous dynamical system if it is fixed by all maps, that is, $g_k(x^\ast) = x^\ast$ for all $k \geq 0$.
\end{definition}

A non-autonomous dynamical system is said to \emph{avoid} a set $S$ if the set of initial points $x_0$ whose trajectories converge to a point in $S$ has (Lebesgue) measure zero.
Precisely:

\begin{definition} \label{def:stable-set}
    The \emph{stable set} of a fixed point $x^\ast$ of a non-autonomous dynamical system $\big( g_k \colon \reals^d \to \reals^d \big)_{k \geq 0}$ is the set of initial points whose trajectories converge to $x^\ast$, that is
    \begin{align*}
        W(x^\ast) = \{\, x_0 \in \reals^d \mid \lim_{k \to \infty} x_k = x^\ast \textrm{ where } x_{k+1} = g_k(x_k) \textrm{ for } k \geq 0 \,\}.
    \end{align*}
    We say $( g_k )_{k \geq 0}$ \emph{avoids} $x^\ast$ if $W(x^\ast)$ has measure zero.
    Moreover, $( g_k )_{k \geq 0}$ \emph{avoids a subset} $S \subseteq \reals^d$ if
    $W(S) \coloneqq \bigcup_{x^\ast \in S} W(x^\ast)$ has measure zero. 
\end{definition}

Below, we introduce the standard notion of pseudo-hyperbolicity for a pair $(g, T)$ consisting of a map $g \colon \reals^d \to \reals^d$ and a linear map $T \colon \reals^d \to \reals^d$.  
When $g$ is differentiable with fixed point $g(0) = 0$, it is natural to take $T = \D g(0)$. 

The definition parallels the assumptions of the classical CSMT: the splitting $\reals^d = \Ecs \oplus \Eu$ represents the decomposition into invariant subspaces of $T$, while the pseudo-hyperbolicity conditions ensure that $T$ is non-expanding on vectors in $\Ecs$ (``center-stable'') and expanding on those in $\Eu$ (``unstable''), with respect to some chosen norms on $\Ecs$ and $\Eu$. 

The small-Lipschitz condition requires that $g$ is well approximated by $T$ in some neighborhood of the fixed point.
This is with respect to the max-norm on $\Rd$, induced by the norms on $\Ecs$ and $\Eu$:
\begin{align}
    \|x\|_{\reals^d} \;\coloneqq\; \max\bigl( \|y\|_{\Ecs}, \; \|z\|_{\Eu} \bigr) && \textrm{ where } x = y + z \textrm{ with } y \in \Ecs \textrm{ and } z \in \Eu.
    \label{eq:Rd-norm}
\end{align}
Subscripts on the norms are omitted without risk of confusion.

\begin{definition} \label{def:pseudo-hyperbolicity}
    Let $g \colon \reals^d \to \reals^d$ be a map and $T \colon \reals^d \to \reals^d$ a linear map.  
    Let $\mu, \lambda, \varepsilon \in \reals$ satisfy $1 \leq \lambda < \mu$ and $\varepsilon > 0$.  
    We say that the pair $(g, T)$ is $(\mu, \lambda, \varepsilon)$-\emph{pseudo-hyperbolic} on an open neighborhood $U$ of $0$ with respect to the splitting $\reals^d = \Ecs \oplus \Eu$ and specified norms on $\Ecs$ and $\Eu$, which we write as
    \begin{align*}
        (g, T) \in \mathrm{PH}(\mu, \lambda, \varepsilon; U, \Ecs \oplus \Eu),
    \end{align*}
    if the following conditions hold: 
    \begin{assumption}[Fixed origin] \label{assumption:fixed-origin}
        g(0) = 0.
    \end{assumption}

    \begin{assumption}[Invariance] \label{assumption:invariance}
        If $y \in \Ecs$, then $Ty \in \Ecs$ and similarly if $z \in \Eu$, then $Tz \in \Eu$.
    \end{assumption}

    \begin{assumption}[Pseudo-hyperbolicity] \label{assumption:pseudo-hyperbolic}
        For all $y \in \Ecs$ and all $z \in \Eu$, 
        \begin{align*}
            \| Ty \| \leq \lambda \| y \|  
            \quad\quad \textrm{ and } \quad\quad
            \| T z \| \geq \mu \| z \|.
        \end{align*}
    \end{assumption}

    \begin{assumption}[Small-Lipschitz] \label{assumption:small-lipschitz}
        $\Lip \big( (g-T)\vert_{U} \big) \leq \varepsilon <  (\mu - \lambda) / 4 $ (w.r.t.\ the max-norm~\eqref{eq:Rd-norm}).
    \end{assumption}

    \noindent If $U = \reals^d$, we say that $(g, T)$ is \emph{globally} pseudo-hyperbolic.
\end{definition}

We next extend this notion to sequences of maps, allowing for time-dependent dynamics.

\begin{definition} \label{def:jointlyPHonU}
    A sequence of pairs $\big( (g_k \colon \reals^d \to \reals^d, T_k \colon \reals^d \to \reals^d) \big)_{k \ge 0}$ where each $T_k$ is a linear map is said to be \emph{jointly pseudo-hyperbolic} on $U$ if each pair $(g_k, T_k)$ is pseudo-hyperbolic on $U$ with respect to the same splitting $\reals^d = \Ecs \oplus \Eu$ and the same norms.
\end{definition}

The next theorem establishes the existence of a center-stable set for such sequences of globally jointly pseudo-hyperbolic pairs.  
Specifically, if the associated constants satisfy a suitable non-summability condition, then one can construct a center-stable set that captures all trajectories which eventually remain in a neighborhood of the fixed origin.

\begin{theorem} \label{theorem:local-csmt-generic}
    Let $\big( (g_k, T_k \colon \reals^d \to \reals^d) \big)_{k \geq 0}$ be a jointly \emph{globally} pseudo-hyperbolic sequence of pairs $(g_k, T_k) \in \mathrm{PH}(\mu_k, \lambda_k, \varepsilon_k; \reals^d, \Ecs \oplus \Eu)$ with constants satisfying $\sum_{k=0}^\infty \frac{\varepsilon_k}{\mu_k - 2 \varepsilon_k} = \infty.$
    Then, there exist Lipschitz functions $\varphi_0, \varphi_1, \dots \colon \Ecs \to \Eu$ such that,
    if the sequence $(x_k)_{k \geq 0}$ is bounded and satisfies $x_{k+1} = g_k(x_k)$ for all $k \geq \bar{k}$, then $x_k \in \graph(\varphi_k)$ for all $k \geq \bar{k}$.\footnote{As per our notation~\eqref{eq:graphnotation}, $x \in \graph(\varphi)$ means that if $x = y + z$ with $y \in \Ecs$ and $z \in \Eu$, then $z = \varphi(y)$.}
\end{theorem}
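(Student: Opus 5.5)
The plan is a non-autonomous graph transform. Let $\mathcal{G}$ be the space of functions $\varphi \colon \Ecs \to \Eu$ with $\varphi(0) = 0$ and $\Lip(\varphi) \leq 1$, w.r.t.\ the norms on $\Ecs$ and $\Eu$.

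\textbf{Step 1: a backward graph transform.} For each $k$ and $\psi \in \mathcal{G}$, define $\Gamma_k \psi \in \mathcal{G}$ by the requirement $g_k^{-1}(\graph(\psi)) = \graph(\Gamma_k \psi)$. Equivalently, $g_k(\graph(\Gamma_k\psi)) \subseteq \graph(\psi)$.

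Fix $y \in \Ecs$ and write $g_k = T_k + r_k$ with $\Lip(r_k) \leq \varepsilon_k$. The condition that $(y, z)$ maps into $\graph(\psi)$ reads
\begin{align*}
    \pu\, g_k(y,z) = \psi(\pcs\, g_k(y,z)).
\end{align*}
Rewrite it as the fixed-point equation
\begin{align*}
    z = T_k|_{\Eu}^{-1}\bigl(\psi(\pcs g_k(y,z)) - \pu r_k(y,z)\bigr).
\end{align*}
The hypothesis \aref{assumption:pseudo-hyperbolic} gives invertibility of $T_k|_{\Eu}$, since $\|T_k z\| \geq \mu_k \|z\|$ forces injectivity on the finite-dimensional $\Eu$. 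The right-hand side has Lipschitz constant in $z$ at most $(\lambda_k+2\varepsilon_k)/\mu_k$, which is $<1$ because $4\varepsilon_k < \mu_k - \lambda_k$. The contraction mapping theorem then yields a unique $z = \Gamma_k\psi(y)$.

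The standard cone estimate shows that $\Lip(\Gamma_k\psi) \leq 1$. Take two points $(y,z)$, $(y',z')$ with images on the graph. Then
\begin{align*}
    \mu_k\|z-z'\| - \varepsilon_k\max(\cdot) \leq \|\pu(\cdot)\| \leq \|\pcs(\cdot)\| \leq \lambda_k\|y-y'\| + \varepsilon_k\max(\cdot).
\end{align*}
This gives $\|z-z'\| \leq \|y-y'\|$ using $\lambda_k + 2\varepsilon_k \leq \mu_k - 2\varepsilon_k$.

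\textbf{Step 2: the sequence $\varphi_k$.} Define $\varphi_k$ as a limit of compositions $\Gamma_k\Gamma_{k+1}\cdots\Gamma_{n}(0)$ as $n\to\infty$. To make this converge, I would show that $\Gamma_k$ contracts the sup-distance:
\begin{align*}
    \|\Gamma_k\psi - \Gamma_k\psi'\|_\infty \leq \frac{1+\varepsilon_k}{\mu_k - 2\varepsilon_k}\,\|\psi-\psi'\|_\infty,
\end{align*}
or at least that it is non-expanding. This is messy, since the functions are unbounded and only $\sup\|\psi-\psi'\|/\|y\|$-type norms are finite.

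A cleaner route avoids the limit. Take $\varphi_k$ to be any choice with $\varphi_k = \Gamma_k \varphi_{k+1}$, obtained from a compactness (Arzel\`a--Ascoli plus diagonal) argument on the equicontinuous family $\mathcal{G}$. Since each $\Gamma_k$ is continuous for locally uniform convergence, a diagonal subsequence of $\Gamma_k\cdots\Gamma_n(0)$ yields a consistent family with $g_k(\graph\varphi_k)\subseteq\graph\varphi_{k+1}$.

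\textbf{Step 3: the potential and its growth.} Set $V_k(x) = \|z - \varphi_k(y)\|$ for $x = (y,z)$, the vertical distance to $\graph(\varphi_k)$. For $x_{k+1} = g_k(x_k)$, compare $x_k$ with the point $\hat x_k = (y_k, \varphi_k(y_k))$ on the graph, whose image lies on $\graph\varphi_{k+1}$. The unstable component then separates by at least $(\mu_k - \varepsilon_k)V_k$, while the center-stable components differ by at most $\varepsilon_k V_k$, as the two points differ only in $\Eu$. Using $\Lip\varphi_{k+1}\leq 1$, this gives
\begin{align*}
    V_{k+1}(x_{k+1}) \geq (\mu_k - 2\varepsilon_k) V_k(x_k).
\end{align*}
We need $\prod_k (\mu_k - 2\varepsilon_k) = \infty$ (given $V_{\bar k}\neq 0$), so that $V_k$ blows up and contradicts boundedness. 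That conclusion needs $V_k \leq 2\max\|x_k\|$, which holds since $\varphi_k(0)=0$ and $\Lip\varphi_k \leq 1$.

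\textbf{Main obstacle.} The bound $\mu_k - 2\varepsilon_k \geq 1 + 2\varepsilon_k$ only gives $\log(\mu_k - 2\varepsilon_k) \gtrsim \varepsilon_k/(\mu_k-2\varepsilon_k)$ up to constants. This uses $\lambda_k \geq 1$, which implies $\mu_k - 2\varepsilon_k \geq 1 + 2\varepsilon_k$.

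To see where the divergence comes from, write
\begin{align*}
    \mu_k - 2\varepsilon_k \geq 1+2\varepsilon_k \geq \Bigl(1 - \frac{2\varepsilon_k}{\mu_k-2\varepsilon_k}\Bigr)^{-1},
\end{align*}
where the second inequality holds because $\mu_k-2\varepsilon_k\ge 1+2\varepsilon_k$. Taking logarithms, $\log(\mu_k - 2\varepsilon_k) \geq 2\varepsilon_k/(\mu_k-2\varepsilon_k)$. The hypothesis $\sum_k \varepsilon_k/(\mu_k-2\varepsilon_k)=\infty$ then forces the product to diverge.

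I expect the real difficulty to be Step 2, namely making the construction of the consistent sequence $\varphi_k$ rigorous. If the compactness route for $\Gamma_k$ proves delicate, I would fall back on the contraction estimate in a weighted sup-norm.
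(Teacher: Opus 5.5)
Your proposal is correct, but it produces the functions $\varphi_k$ by a different route than the paper.

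\textbf{The paper's route.} The paper works in $\calF_1$ with the weighted norm $\|\varphi\|=\sup_{y\neq0}\|\varphi(y)\|/\|y\|$, which is your fallback. It proves $\Lip(\Gamma_k)\le(\lambda_k+\varepsilon_k)/(\mu_k-2\varepsilon_k)$ and shows that $\Gamma_{\bar k}\circ\cdots\circ\Gamma_k(0)$ is Cauchy because $\prod_k\Lip(\Gamma_k)=0$. That is a second use of non-summability, through $\mu_k-\lambda_k-3\varepsilon_k\ge\varepsilon_k$. The mere non-expansion you mention would not give convergence. The paper then unrolls the potential inequality starting from the zero function and passes to the limit $k\to\infty$. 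You instead run the potential growth directly along a consistent family.

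\textbf{Your route is sound.} The set $\mathcal G$ is compact for locally uniform convergence, since $\|\varphi(y)\|\le\|y\|$ and the dimension is finite. You only assert that $\Gamma_k$ is continuous, and you should write it out. Put $z_n=\Gamma_k\psi_n(y)$ and $z=\Gamma_k\psi(y)$. The fixed-point equations, together with $\pcs T_k(0,z_n-z)=0$, give $(\mu_k-2\varepsilon_k)\|z_n-z\|\le\sup_B\|\psi_n-\psi\|$. Here $B$ is a bounded set containing $\pcs g_k(y,z)$ for all $\|y\|,\|z\|\le R$. With continuity in hand, $\varphi_k=\Gamma_k\varphi_{k+1}$ passes to the limit along the diagonal subsequence, and your Step 3 finishes the proof.

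\textbf{What each approach buys.} Your route never needs the contraction estimate for $\Gamma_k$. It only uses $\prod_k(\mu_k-2\varepsilon_k)=\infty$, which is equivalent to the first of the two relaxed conditions the paper lists after its proof. That condition does not imply the second: take $\mu_k=2$, $\lambda_k=2-5\varepsilon_k$ with $\sum_k\varepsilon_k<\infty$. The paper's route gives a constructive, canonical family. Every starting function in $\calF_1$ yields the same limit, so the consistent family is unique. It also needs no compactness, in the classical Hirsch--Pugh--Shub style.
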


\begin{remark}
    The assumption of a common splitting $\Ecs \oplus \Eu$ is necessary in general (and is fulfilled in the optimization settings we consider later on).
    To see this, consider the following linear maps with different invariant splittings:
    \begin{align*}
        g_1 =
            \begin{pmatrix}
            0 & 0\\[2pt]
            -\frac15 & 2
            \end{pmatrix},
        \qquad \qquad
        g_2 =
            \begin{pmatrix}
            198 & \frac15\\[2pt]
            0 & 2
            \end{pmatrix}.
    \end{align*}
    The origin is a strictly unstable fixed point for each map individually, yet 
    \begin{align*}
        (g_1 \circ g_2 \circ g_2)^2 = 0.
    \end{align*}
    This shows that, without a common invariant splitting, a set of positive measure of initial conditions can eventually be mapped to an unstable fixed point.
\end{remark}

\noindent We postpone the proof of Theorem~\ref{theorem:local-csmt-generic} to Section~\ref{section:graph-transform-nonautonomous}.
Observe that this is a \emph{global} result: the pseudo-hyperbolicity
assumptions, and in particular the small-Lipschitz control in~\aref{assumption:small-lipschitz} is required to hold on all of \(\reals^d\).
In many applications, this control is available only \emph{locally}, in a neighborhood of a fixed point.
In the next step, we remove this globality requirement by showing how to reduce the local setting
to the global one without changing the dynamics near the fixed point.

As a minor notational point, the fixed point need not be at the origin.
Accordingly, if \(x^\ast\) is such that \(g_k(x^\ast)=x^\ast\) for all \(k\), we consider the shifted maps
\begin{align*}
    (g_k-x^\ast)(x) \coloneqq g_k(x+x^\ast)-x^\ast,
\end{align*}
restricted to an open ball $U := B_r(0)$ around the fixed point $0 = (g_k - x^\ast)(0)$.

\begin{definition} \label{def:unstable-fixed-point}
    A point $x^\ast \in \reals^d$ is a \emph{non-uniformly pseudo-hyperbolic (NPH) unstable fixed point} of the non-autonomous system given by the maps $\big( g_k \colon \reals^d \to \reals^d \big)_{k \geq 0}$ if there exist
    a splitting $\reals^d = \Ecs \oplus \Eu$ with $\dim(\Eu) \geq 1$,
    a family of linear maps $\big( T_k \colon \reals^d \to \reals^d \big)_{k \geq 0}$,
    a radius $r > 0$,
    an index $K \geq 0$ and
    a sequence of positive constants $( \mu_k, \lambda_k, \varepsilon_k )_{k \geq K}$ such that
    \begin{itemize}
        \item $(g_k - x^\ast, T_k) \in \mathrm{PH}\Big(\mu_k, \lambda_k, \frac{\varepsilon_k}{4}; B_r(0), \Ecs \oplus \Eu\Big)$ (w.r.t.\ the same norms) for all $k \geq K$,
        \item $\varepsilon_k < (\mu_k - \lambda_k) / 4$ for all $k \geq K$, and
        \item $\sum_{k=K}^\infty \varepsilon_k / (\mu_k - 2 \varepsilon_k) = + \infty$.
    \end{itemize}
    In particular, the pairs $(g_k - x^\ast, T_k)$ are (locally) jointly pseudo-hyperbolic as per Definition~\ref{def:jointlyPHonU}.
\end{definition}

\begin{remark}
    If the system is $C^1$ and autonomous ($g_k \equiv g$) then NPH unstable fixed points coincide with the
    usual notion of strictly unstable fixed points.  
    Indeed, we can take $T_k = \D g(x^\ast)$ and the pseudo-hyperbolicity condition
    corresponds to the usual center-stable/unstable splitting of the linearization at $x^\ast$. 
    In this case, the non-summability is satisfied easily since the sequence $( \mu_k, \lambda_k, \varepsilon_k )_{k \geq 0}$ can be taken constant.
\end{remark}

To study the local stable set of an NPH unstable fixed point $x^\ast$, we smoothly modify the maps
$g_k$ outside a small neighborhood of $x^\ast$ so that the resulting sequence becomes jointly \emph{globally} pseudo-hyperbolic.
This allows us to apply Theorem~\ref{theorem:local-csmt-generic} and construct the local center-stable set as the union of the graphs of the resulting Lipschitz functions $\varphi_k$.

\begin{theorem} [non-autonomous center-stable set theorem] \label{theorem:csmt-general}
    Let $x^\ast \in \reals^d$ be an NPH unstable fixed point of a non-autonomous dynamical system $\big( g_k \colon \reals^d \to \reals^d \big)_{k \geq 0}$.
    Then, there exist
    \begin{itemize}
        \item an open neighborhood $B$ of $x^\ast$,
        \item a measure zero set $\Wcsloc$ in $\reals^d$,
    \end{itemize}
    and an integer $K \geq 0$
    with the following property:
    if $( x_k )_{k \geq 0}$ is a sequence such that, for some $\bar{k} \geq K$, we have $x_{k+1} = g_k(x_k)$ and $x_k \in B$ for all $k \geq \bar{k}$, then 
    $x_k \in \Wcsloc$ for all $k \geq \bar{k}$.
\end{theorem}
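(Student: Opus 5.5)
We reduce Theorem~\ref{theorem:csmt-general} to Theorem~\ref{theorem:local-csmt-generic} with a bump-function cutoff. Translate so that $x^\ast = 0$ and write $h_k := g_k - x^\ast$ for $k \geq K$, with $(h_k, T_k) \in \mathrm{PH}(\mu_k, \lambda_k, \varepsilon_k/4; B_r(0), \Ecs \oplus \Eu)$.

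\textbf{Step 1: the cutoff.} Fix a Lipschitz bump $\rho \colon \reals^d \to [0,1]$ with the following properties:
\begin{itemize}
\item $\rho \equiv 1$ on $B_{r/2}(0)$ and $\rho \equiv 0$ outside $B_r(0)$;
\item $\Lip(\rho) \leq c/r$ with respect to the max-norm~\eqref{eq:Rd-norm}.
\end{itemize}
For instance, take $\rho(x) = \min\{1, \max\{0, 2 - 2\|x\|/r\}\}$, which gives $c = 2$. Then define
\begin{align*}
\tilde g_k := T_k + \rho\,(h_k - T_k).
\end{align*}
The conditions \aref{assumption:fixed-origin}--\aref{assumption:pseudo-hyperbolic} are inherited immediately, since $T_k$ is unchanged and $\tilde g_k(0) = h_k(0) = 0$. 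Moreover, $\tilde g_k = h_k$ on $B_{r/2}(0)$.

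\textbf{Step 2: the Lipschitz estimate.} This is the main obstacle: we need $\Lip(\tilde g_k - T_k) \leq \varepsilon_k$ on all of $\reals^d$. Write $\psi_k := h_k - T_k$. We have $\psi_k(0) = 0$ and $\Lip(\psi_k|_{B_r}) \leq \varepsilon_k/4$, so $\|\psi_k(x)\| \leq \varepsilon_k r/4$ on $B_r$.

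For $x, y \in B_r$, the product rule gives
\begin{align*}
\|\rho\psi_k(x) - \rho\psi_k(y)\| \leq \tfrac{\varepsilon_k}{4}\|x-y\| + \tfrac{c}{r}\cdot\tfrac{\varepsilon_k r}{4}\|x-y\|.
\end{align*}
With $c = 2$ this is $\tfrac34\varepsilon_k\|x-y\|$.

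For points outside $B_r$, $\rho\psi_k$ vanishes. When one point is inside $B_r$ and the other outside, use the bound for $x \in B_r$ and $y$ on the segment's exit point (a convex-ball argument in the max-norm), or simply note $\|\rho(x)\psi_k(x)\| \leq \tfrac{c}{r}\,\mathrm{dist}(x, \partial B_r)\cdot\tfrac{\varepsilon_k r}{4}$. Either way the global constant is at most $\varepsilon_k$.

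Together with $\varepsilon_k < (\mu_k - \lambda_k)/4$, this gives
\begin{align*}
(\tilde g_k, T_k) \in \mathrm{PH}(\mu_k, \lambda_k, \varepsilon_k; \reals^d, \Ecs \oplus \Eu)
\end{align*}
for all $k \geq K$. The factor $1/4$ in Definition~\ref{def:unstable-fixed-point} is exactly what absorbs the cutoff's Lipschitz cost. If the constant works out slightly worse, we shrink the plateau, for example to $B_{r/2}$ with a gentler slope; the estimate above suggests $c = 2$ suffices.

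\textbf{Step 3: reindexing and the non-summability.} Reindex by setting $\tilde g'_j := \tilde g_{K+j}$. The non-summability hypothesis $\sum_{k \geq K} \varepsilon_k/(\mu_k - 2\varepsilon_k) = \infty$ is exactly the one required. Theorem~\ref{theorem:local-csmt-generic} then yields Lipschitz maps $\varphi_k \colon \Ecs \to \Eu$ for $k \geq K$.

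\textbf{Step 4: definitions and conclusion.} Set
\begin{align*}
B := B_{r/2}(x^\ast), \qquad \Wcsloc := \bigcup_{k \geq K}\bigl(x^\ast + \graph(\varphi_k)\bigr).
\end{align*}
This is a countable union of graphs of Lipschitz functions over the proper subspace $\Ecs$ (recall $\dim \Eu \geq 1$). Each such graph is the image of the measure-zero set $\Ecs \times \{0\}$ under the bi-Lipschitz map $(y,z) \mapsto (y, z + \varphi_k(y))$, so it has measure zero. Hence $\Wcsloc$ has measure zero.

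Now suppose $\bar k \geq K$, $x_{k+1} = g_k(x_k)$ and $x_k \in B$ for all $k \geq \bar k$. Then the shifted sequence $x_k - x^\ast$ stays in $B_{r/2}(0)$, where $\tilde g_k = h_k$. So it is a bounded sequence satisfying $x_{k+1} - x^\ast = \tilde g_k(x_k - x^\ast)$ for all $k \geq \bar k$. The conclusion of Theorem~\ref{theorem:local-csmt-generic}, applied to the tail indexed from $\bar k$, gives $x_k - x^\ast \in \graph(\varphi_k)$. Therefore $x_k \in \Wcsloc$ for all $k \geq \bar k$.
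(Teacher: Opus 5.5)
Your proposal is correct and follows the paper's proof essentially step for step. The paper also translates $x^\ast$ to the origin and uses exactly your bump $\min\{1,\max\{0,2-\frac{2}{r}\|x\|\}\}$ (Lemma~\ref{lemma:local-to-global-small-lipschitz}) to get $\tilde g_k = g_k$ on $B_{r/2}(0)$ with $\Lip(\tilde g_k - T_k) \leq \frac{3}{4}\varepsilon_k$; it then applies Theorem~\ref{theorem:local-csmt-generic} to $(\tilde g_k, T_k)_{k \geq K}$ and takes $\Wcsloc$ to be the countable union of the Lipschitz graphs. Your hedging at the end of Step~2 is unnecessary: in the mixed case, $\rho(x)\|\psi_k(x)\| \leq \frac{2}{r}\|x-y\|\cdot\frac{\varepsilon_k r}{4} = \frac{1}{2}\varepsilon_k\|x-y\|$ directly, so the bound $\frac{3}{4}\varepsilon_k$ already holds globally.
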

\begin{proof}
    Assume without loss of generality that $x^\ast = 0$.
    Since $x^\ast$ is an NPH unstable fixed point, there exist a family of linear maps $\big( T_k \colon \reals^d \to \reals^d  \big)_{k \geq 0}$, a constant $r > 0$, an index $K \geq 0$ and a sequence $( \mu_k, \lambda_k, \varepsilon_k )_{k \geq K}$ such that (among other things) $\Lip((g_k - T_k)\vert_{B_r(0)}) \leq \varepsilon_k / 4$ for all $k \geq K$.
    Using a smooth transition function, we modify all maps $(g_k)_{k \geq K}$ outside a \emph{common} neighborhood of the origin so that the small-Lipschitz condition holds globally---see Lemma~\ref{lemma:local-to-global-small-lipschitz}: it provides maps $\tilde{g}_k \colon \reals^d \to \reals^d$ such that $\tilde{g}_k = g_k$ on $B \coloneqq B_{r/2}(0)$ and $\Lip(\tilde{g}_k - T_k) \leq \varepsilon_k$ for all $k \geq K$.
    
    Furthermore, the dynamics of $g_k$ and $\tilde{g}_k$ coincide locally.
    Indeed, consider a sequence $(x_k)_{k \geq 0}$ such that, for some $\bar{k} \geq K$, we have $x_{k + 1} = g_k(x_k)$ and $x_k \in B$ for all $k \geq \bar{k}$.
    Then, $x_{k+1} = g_k(x_k) = \tilde{g}_k (x_k) \in B$ for all $k \geq \bar{k}$.
    Thus, it suffices to show the result for the dynamical system corresponding to the $\tilde{g}_k$ maps.
    Observe that together with the other conditions from Definition~\ref{def:unstable-fixed-point}, the global
    Lipschitz control over $\tilde{g}_k - T_k$ implies that 
    \begin{align*}
        (\tilde{g}_k, T_k) \in \mathrm{PH}\Big( \mu_k, \lambda_k, \varepsilon_k; \reals^d, \Ecs \oplus \Eu \Big).
    \end{align*}
    By Theorem~\ref{theorem:local-csmt-generic} applied to the pairs $(\tilde{g}_k, T_k)_{k \geq K}$, there exists a sequence of Lipschitz functions $\varphi_K, \varphi_{K+1}, \dots \colon \Ecs \to \Eu$ with the following property:
    if the sequence $(x_k)_{k \geq 0}$ satisfies $x_k \in B$ and $x_{k+1} = g_k(x_k)$ for all $k \geq \bar{k} \geq K$, then $x_k \in \graph(\varphi_k)$ for all $k \geq \bar{k}$.
    Thus, the set 
    \begin{align*}
        \Wcsloc \coloneqq \bigcup_{k \geq K} \graph(\varphi_k)
    \end{align*}
    satisfies the desired invariance property.
    Moreover, each $\varphi_k \colon \Ecs \to \Eu$ is Lipschitz and $\dim(\Eu) \geq 1$, so each graph has measure zero in $\reals^d$.
    Hence, $\Wcsloc$ also has measure zero.
\end{proof}

\subsection{A global result}

With the local characterization of trajectories near an NPH unstable fixed point in hand, we now extend the avoidance guarantee globally.
The main ingredient is the following property.
Let $\mu$ denote the (standard) $d$-dimensional Lebesgue measure on $\reals^d$.

\begin{definition} \label{def:luzin}
    A map $g \colon \reals^d \rightarrow \reals^d$ has the 
    \emph{Luzin $N^{-1}$ property} if 
    \begin{equation*}
        \text{for all } E \subseteq \reals^d, \qquad\qquad
        \mu(E) = 0 
        \quad \implies \quad
        \mu(g^{-1}(E)) = 0.
    \tag{Luzin $N^{-1}$}
    \end{equation*}
\end{definition}
Although the Luzin $N^{-1}$ property may seem hard to check at first, a classical lemma provides a simpler characterization for continuously differentiable $g$.

\begin{lemma} \citep{ponomarev1987submersions} \label{lemma:luzin-verify}
    If $g \colon \reals^d \rightarrow \reals^d$ is a $C^1$ map, then $g$ has the Luzin $N^{-1}$ property if and only if $\rank \D g(x) = d$ for almost all $x \in \reals^d$.    
\end{lemma}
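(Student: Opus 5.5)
We prove the two implications separately. Throughout, $Z \coloneqq \{x : \rank \D g(x) < d\}$ denotes the critical set; it is closed because $g$ is $C^1$ and $x \mapsto \det \D g(x)$ is continuous.

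\emph{Necessity.} Suppose $g$ has the Luzin $N^{-1}$ property but $\mu(Z) > 0$. By Sard's theorem ($C^1$ suffices when the dimensions are equal), the critical values satisfy $\mu(g(Z)) = 0$. Applying the Luzin $N^{-1}$ property to $E = g(Z)$ gives $\mu(g^{-1}(g(Z))) = 0$. But $Z \subseteq g^{-1}(g(Z))$, so $\mu(Z) = 0$, a contradiction. Hence $\rank \D g(x) = d$ for almost all $x$.

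\emph{Sufficiency.} Suppose $\mu(Z) = 0$ and let $E$ satisfy $\mu(E) = 0$. Write
\begin{align*}
    g^{-1}(E) \subseteq Z \cup \bigl(g^{-1}(E) \setminus Z\bigr).
\end{align*}
The first piece is null by assumption, so it remains to show that $g^{-1}(E) \cap (\reals^d \setminus Z)$ is null. The set $\reals^d \setminus Z$ is open, and $g$ is a local $C^1$ diffeomorphism on it by the inverse function theorem. Every point of this open set therefore has an open neighborhood $U$ on which $g\vert_U$ is a $C^1$ diffeomorphism onto an open set $g(U)$. Since $\reals^d$ is second countable (Lindel\"of), countably many such sets $U_i$ cover $\reals^d \setminus Z$.

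On each $U_i$ the inverse $(g\vert_{U_i})^{-1} \colon g(U_i) \to U_i$ is $C^1$, hence locally Lipschitz. Locally Lipschitz maps on open sets of $\reals^d$ send null sets to null sets: exhaust the domain by countably many compact sets on which the map is Lipschitz, and use that a Lipschitz map sends null sets to null sets via covers by small cubes. Consequently
\begin{align*}
    g^{-1}(E) \cap U_i = (g\vert_{U_i})^{-1}\bigl(E \cap g(U_i)\bigr)
\end{align*}
is null for each $i$. A countable union of null sets is null, so $g^{-1}(E) \setminus Z$ is null, and therefore $\mu(g^{-1}(E)) = 0$.

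The main point needing care is in the necessity direction. There one needs Sard's theorem in the equidimensional $C^1$ case, which holds: the required regularity is $C^{\max(n-m+1,1)}$, and here $n = m = d$. One also needs the elementary inclusion $Z \subseteq g^{-1}(g(Z))$. The sufficiency direction is routine; the only measure-theoretic subtlety is that the measurability of $g^{-1}(E)$ is handled by completeness of the Lebesgue measure, since we exhibit $g^{-1}(E)$ as a subset of a null set.
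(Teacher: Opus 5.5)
Your proof is correct. The paper gives no proof of this lemma and simply cites \citet{ponomarev1987submersions}, so your argument is a genuinely different, self-contained route. For necessity, you apply the Luzin $N^{-1}$ property to the null set $E = g(Z)$ provided by the equidimensional $C^1$ Sard theorem and use $Z \subseteq g^{-1}(g(Z))$. For sufficiency, you combine the inverse function theorem on the open set $\reals^d \setminus Z$, a countable Lindel\"of cover, and the fact that $C^1$ (locally Lipschitz) maps send null sets to null sets. Your route makes transparent that $C^1$ is exactly the regularity equidimensional Sard needs. Since every step is local, it also applies verbatim to $C^1$ maps defined on open subsets of $\reals^d$ or between $d$-dimensional manifolds. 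That is reassuring, because the paper also invokes the lemma for the maps $h_k \colon (0,\infty) \to (0,\infty)$ in Lemma~\ref{lemma:all-maps-have-luzin}, which are not literally maps on all of $\reals^d$. The Sard step is where your argument is tied to equal dimensions: for $C^1$ maps $\reals^n \to \reals^m$ with $n > m$, Sard requires $C^{n-m+1}$ and can fail for $C^1$, so your necessity argument would not carry over there, whereas citing the literature avoids that issue. For the equidimensional statement the paper actually uses, your proof is complete.
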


\begin{remark} \label{remark:gd-restricted-step-always-luzin}
    The Luzin $N^{-1}$ property is fairly common in optimization.
    For example, if $f \colon \reals^d \to \reals$ is $C^2$ and has $L$-Lipschitz gradient, then the gradient descent iteration map $g_{\alpha}(x) = x - \alpha \nabla f(x)$ satisfies the Luzin $N^{-1}$ property for all $\alpha \in (0, 1/L)$, because $\D g_{\alpha}(x) = I - \alpha \nabla^2 f(x)$ is invertible for \emph{all} $x$.
    In Sections~\ref{sec:luzin-gd} and~\ref{subsec:RGD}, we exploit the extra leeway (``for \emph{almost} all $x$'') to remove the Lipschitz assumption and allow arbitrarily large step sizes.
\end{remark}

When all the maps $g_k$ satisfy the Luzin $N^{-1}$ property, we can extend the unstable fixed point avoidance result globally.
Sections~\ref{section:gd}--\ref{section:pp} provide applications of this theorem.

\begin{theorem} [Global NPH unstable fixed point avoidance] \label{thm:global-avoidance-nonautonomous}
    If each map $g_k$ of the non-autonomous dynamical system $\big( g_k \colon \reals^d \to \reals^d \big)_{k \geq 0}$ has the Luzin $N^{-1}$ property, then the system avoids its set of NPH unstable fixed points (see Definitions~\ref{def:stable-set} and~\ref{def:unstable-fixed-point}).
\end{theorem}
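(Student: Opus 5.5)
The plan is to reduce the global statement to the local Theorem~\ref{theorem:csmt-general} by pulling back through finitely many iteration maps and then taking a countable union. The one obstacle is that the set $S$ of NPH unstable fixed points may be uncountable (for instance, a continuum of non-isolated saddles), so we cannot simply union over $x^\ast \in S$. We first deal with this by a Lindel\"of argument.

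For each $x^\ast \in S$, Theorem~\ref{theorem:csmt-general} provides an open neighborhood $B_{x^\ast}$, a measure-zero set $W_{x^\ast} \coloneqq \Wcsloc$ and an index $K_{x^\ast}$. The open cover $\{B_{x^\ast}\}_{x^\ast \in S}$ of $S$ admits a countable subcover $\{B_{x_i^\ast}\}_{i \geq 1}$, because $\reals^d$ is second countable. Write $B_i, W_i, K_i$ for the corresponding objects.

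Now take $x_0 \in W(S)$, so $x_k \to x^\ast$ for some $x^\ast \in S$. Choose $i$ with $x^\ast \in B_i$. Since $B_i$ is open, there is $k_0$ such that $x_k \in B_i$ for all $k \geq k_0$. Set $\bar{k} \coloneqq \max(k_0, K_i)$. The hypotheses of Theorem~\ref{theorem:csmt-general} then hold for $x^\ast_i$, because the trajectory satisfies $x_{k+1} = g_k(x_k)$ and $x_k \in B_i$ for all $k \geq \bar{k} \geq K_i$. Hence $x_{\bar{k}} \in W_i$. Here it matters that the local theorem does not require the limit to be $x_i^\ast$ itself: it only asks that the trajectory stay in $B_i$. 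Since $x_{\bar{k}} = (g_{\bar{k}-1} \circ \cdots \circ g_0)(x_0)$, we get
\begin{align*}
    W(S) \subseteq \bigcup_{i \geq 1} \bigcup_{m \geq 0} \big(g_{m-1} \circ \cdots \circ g_0\big)^{-1}(W_i),
\end{align*}
where the composition is the identity when $m = 0$.

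It remains to show that each set in this union is null. The preimage of a set under a composition is obtained by taking preimages under the individual maps one at a time:
\begin{align*}
    \big(g_{m-1} \circ \cdots \circ g_0\big)^{-1}(E) = g_0^{-1}\big( \cdots g_{m-1}^{-1}(E) \cdots \big).
\end{align*}
Each $g_k$ has the Luzin $N^{-1}$ property, so by induction on $m$ every such preimage of the null set $W_i$ has measure zero. A countable union of null sets is null, so $\mu(W(S)) = 0$, which is the claim.

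The only delicate points are the countable-subcover reduction and the choice of $\bar{k}$ at least $K_i$. Measurability is not an issue: Definition~\ref{def:luzin} is stated for arbitrary sets, so we can work with outer measure throughout.
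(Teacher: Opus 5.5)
Your proposal is correct and follows essentially the same route as the paper's proof. Both arguments apply Theorem~\ref{theorem:csmt-general} at each NPH unstable fixed point, extract a countable subcover by Lindel\"of, and pull the measure-zero sets $\Wcsloc(x_i^\ast)$ back through the compositions $g_{k-1}\circ\cdots\circ g_0$, concluding by the Luzin $N^{-1}$ property of these compositions and countable subadditivity. Your explicit choice $\bar{k} = \max(k_0, K_i)$, and your remark that the local theorem only needs the trajectory to stay in $B_i$ (not to converge to $x_i^\ast$), spell out two steps the paper states more tersely.
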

\begin{proof}
    The proof is an immediate adaptation of the now standard arguments in \citep[Thm.~2]{lee2019strictsaddles}.
    To handle our generalized setup, it relies on the new Theorem~\ref{theorem:csmt-general} which itself relies on Theorem~\ref{theorem:local-csmt-generic} (proved in Sections~\ref{section:graph-transform-autonomous}--\ref{section:graph-transform-nonautonomous}).

    Let $x^\ast$ be an NPH unstable fixed point of $( g_k )_{k \geq 0}$.
    Applying Theorem~\ref{theorem:csmt-general}, we obtain an integer $K \geq 0$, an open neighborhood $B({x^\ast})$ of $x^\ast$ and a measure zero set $\Wcsloc(x^\ast)$ in $\reals^d$ satisfying the following:
    for a trajectory $(x_k)_{k \geq 0}$,
    if there exists $k_0 \geq K$ such that $x_k \in B({x^\ast})$ for all $k \geq k_0$, then $x_k \in \Wcsloc(x^\ast)$ for all $k \geq k_0$.

    Accordingly, if $x_0$ is such that the trajectory $(x_k)_{k \geq 0}$ eventually enters and never again exits $B({x^\ast})$,
    then there exists some $k \geq K$ such that $x_{k}$ is in $\Wcsloc(x^\ast)$.
    Since $x_{k} = \Phi_{k}(x_{0})$ with $\Phi_{k} \coloneqq g_{k-1} \circ \cdots \circ g_0$, it follows that
    \begin{align} \label{eq:preimage-wcsloc}
        x_0 \in \bigcup_{k \geq K} \Phi_{k}^{-1} (\Wcsloc(x^\ast)).
    \end{align}
    Let $S$ denote the set of all NPH unstable fixed points of $( g_k )_{k \geq 0}$.
    Applying the argument from above at each $x^\ast \in S$, we obtain sets $\{ B({x^\ast}) \}_{x^\ast \in S}$ which form a potentially uncountable open cover for $S$.
    By Lindel\"{o}f's lemma, we can extract a countable set $\{x_i^\ast\}_{i \geq 0}$ such that the associated open sets $\{ B({x_i^\ast}) \}_{i \geq 0}$ cover $S$.
    
    Now, consider $x_0 \in \reals^d$ such that the trajectory $(x_k)_{k \geq 0}$ converges to an (arbitrary) NPH unstable fixed point $x^\ast \in S$.
    Then, there exists some index $i \geq 0$ such that $x^\ast \in B({x_i^\ast})$.
    Since the trajectory eventually enters $B({x_i^\ast})$ and never exits it again, from Eq.~\eqref{eq:preimage-wcsloc}, we obtain that $x_0$ is in the set 
    \begin{align*}
        W \coloneqq \bigcup_{i \geq 0} \; \bigcup_{k \geq K} \Phi_{k}^{-1} (\Wcsloc(x_i^\ast)). 
    \end{align*}
    As all maps $g_k$ have the Luzin $N^{-1}$ property, the maps $\Phi_{k}$ also have the Luzin $N^{-1}$ property for all $k \geq 0$.
    Then, since the sets $\Wcsloc(x_i^\ast)$ all have measure zero in $\reals^d$, it follows that 
    $W$ has measure zero as a countable union of measure zero sets.
\end{proof}

\section{The graph transform method for autonomous systems} \label{section:graph-transform-autonomous}

In this section, we review some well-known ingredients of the graph transform method for autonomous systems ($x_{k+1} = g(x_k)$): we shall repurpose them in Section~\ref{section:graph-transform-nonautonomous} for non-autonomous systems.
For completeness, we include proofs in Appendix~\ref{appendix:graph-transform-proofs} based on~\citep{hirsch1977invariant}.

Let $\Ecs, \Eu \subseteq \reals^d$ be complementary subspaces such that $\reals^d = \Ecs \oplus \Eu $, that is, 
each $x \in \reals^d$ splits uniquely as $x = y + z$ with $y \in \Ecs$ and $z \in \Eu$.  
Let $\pcs \colon \reals^d \to \Ecs$ and $\pu \colon \reals^d \to \Eu$ denote the corresponding projectors, so that
$y = \pcs(x)$ and $z = \pu(x)$.
Given a map $H \colon \reals^d \to \reals^d$, for ease of notation we define 
$H_{\mathrm{cs}} \coloneqq \pcs \circ H$ and $H_{\mathrm{u}} \coloneqq \pu \circ H$.

The subspaces $\Ecs$ and $\Eu$ each have their norm, and we equip $\Rd$ with the induced max-norm~\eqref{eq:Rd-norm}.

The (inverse) graph transform method constructs a center-stable manifold by working within a class of candidate graphs:
we consider certain maps $\varphi \colon \Ecs \to \Eu$ and identify each candidate set with $\graph(\varphi) \subset \Ecs \oplus \Eu$.
Then, we consider a graph transform operator that maps $\graph(\varphi)$ to a new graph, which represents (locally) a kind of inverse image of $\graph(\varphi)$ under the dynamics.
The center-stable manifold is the limit of this sequence of graphs.

\subsection{A graph invariance property}

For the rest of this section, we fix a pseudo-hyperbolic pair $(g, T) \in \mathrm{PH}(\mu, \lambda, \varepsilon; \reals^d, \Ecs \oplus \Eu)$ as per Definition~\ref{def:pseudo-hyperbolicity}.
\emph{Given} some Lipschitz function $\varphi \colon \Ecs \to \Eu$, our goal is to \emph{construct} $\tilde{\varphi} \colon \Ecs \to \Eu$ such that the graph of $\tilde{\varphi}$ is mapped by $g$ into the graph of $\varphi$, that is,\footnote{In this sense, $\graph(\tilde{\varphi})$ is a kind of inverse image of $\graph(\varphi)$ under the dynamical system~$g$, even though~$g$ itself may not be invertible.}
\begin{align} \label{eq:graph-invariance}
    x \in \graph(\tilde{\varphi}) \quad \implies \quad g(x) \in \graph(\varphi).
\end{align}
As before (recall Eq.~\eqref{eq:graphnotation}), $\graph({\varphi})$ denotes the set of points $x \in \reals^d$ that admit the decomposition $x = y + {\varphi}(y)$ with $y \in \Ecs$.
We also write this as $x = (y, {\varphi}(y))$.
Equivalently, these are points satisfying $\pu(x) = {\varphi}(\pcs(x))$.

Using this notation, Eq.~\eqref{eq:graph-invariance} can be restated as follows: each point in $\graph(\tilde{\varphi})$ is of the form $(y, \tilde{\varphi}(y))$ for some $y \in \Ecs$, and we require $g(y, \tilde{\varphi}(y))$ to be in $\graph(\varphi)$, that is,
\begin{align} \label{eq:graph-invariance-expanded}
    \pu \big( g(y, \tilde{\varphi}(y)) \big) =
    \varphi \big(\pcs( g( y,\tilde\varphi(y) )) \big). 
\end{align}
For each $y \in \Ecs$, our goal is to determine $\tilde{\varphi}(y)$ 
satisfying the relationship above.

We proceed by rewriting condition~\eqref{eq:graph-invariance-expanded} as a fixed-point equation and showing that it admits a unique solution.
To do so, decompose $\pu \circ g = \pu \circ (g-T) + \pu \circ T$.
By the invariance of the splitting of $\reals^d$ under $T$, for any $(y, z) \in \reals^d$ we have $\pu (T(y,z)) = T\vert_{\Eu}\,z$, where $T\vert_{\Eu} \colon \Eu \to \Eu$ is the restriction of $T$ to the unstable subspace.\footnote{Not to be confused with $\Tu = \pu \circ T$, which is defined on all of $\reals^d$.}
Then, writing $z := \tilde\varphi(y)$, condition~\eqref{eq:graph-invariance-expanded} becomes 
\begin{align*}
    T\vert_{\Eu}\, z =
    \varphi\big( \pcs(g(y,z)) \big) - \pu \big( (g-T)(y,z) \big).
\end{align*}
Since $T|_{\Eu}\colon \Eu\to \Eu$ is invertible by~\aref{assumption:pseudo-hyperbolic}, it follows that $z$ must be a fixed point of the \emph{auxiliary map} $h_y^\varphi\colon \Eu\to \Eu$ defined by
\begin{align} \label{eq:h-phi-y-definition}
    h_y^\varphi(z) = 
    \big( T\vert_{\Eu} \big)^{-1} 
    \Big( \varphi\big( \gcs(y,z) \big) - \bigl((g_u-T_u)(y,z)\bigr) \Big),
\end{align}
where we use the notation $\gcs = \pcs \circ g$ and $\gu = \pu \circ g$.

Lemma~\ref{lemma:h_y_phi_lipschitz} below implies for each $y \in \Ecs$ that the map $h_y^\varphi$ is a contraction.
Consequently, it has a unique fixed point which we denote by $\Fix(h_y^\varphi)$.
Then, the function
\begin{align*}
    \tilde{\varphi}(y) \coloneqq \Fix(h_y^\varphi)
\end{align*}
is well defined, and it satisfies our requirement in Eq.~\eqref{eq:graph-invariance} (by design).

Since we later study the regularity of $\tilde{\varphi}$, it is useful to record already now that, for fixed $z$, the map $y \to h_y^\varphi(z)$ is Lipschitz.
This observation will allow us to show that if $\varphi$ belongs to a given Lipschitz class, then so does $\tilde{\varphi}$.

\begin{lemma} \label{lemma:h_y_phi_lipschitz}
    Let $(g, T) \in \mathrm{PH}(\mu, \lambda, \varepsilon; \reals^d, \Ecs \oplus \Eu)$ and let $\varphi \colon \Ecs \to \Eu$ satisfy $\Lip(\varphi) \leq 1$ and $\varphi(0) = 0$.
    Then, the associated auxiliary map $\hyvarphi$~\eqref{eq:h-phi-y-definition} satisfies:
    \begin{itemize}
        \item For all $y \in \Ecs$, it holds that $\Lip(z \mapsto \hyvarphi(z)) \leq 2 \varepsilon / \mu < 1$. 
        \item For all $z \in \Eu$, it holds that $\Lip(y \mapsto \hyvarphi(z)) \leq (\lambda + 2 \varepsilon) / \mu $.
    \end{itemize}
\end{lemma}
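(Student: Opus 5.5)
The plan is to estimate the two Lipschitz constants directly from the definition~\eqref{eq:h-phi-y-definition}, using three ingredients. First, by~\aref{assumption:pseudo-hyperbolic}, $\|T\vert_{\Eu} z\| \ge \mu \|z\|$, so $T\vert_{\Eu}$ is injective, hence invertible on the finite-dimensional space $\Eu$, with $\|(T\vert_{\Eu})^{-1} w\| \le \|w\|/\mu$. This means $\Lip\big((T\vert_{\Eu})^{-1}\big) \le 1/\mu$, and it suffices to bound the Lipschitz constant of the bracketed expression $\varphi(\gcs(y,z)) - (\gu - \Tu)(y,z)$ in each variable. Second, since the norm on $\reals^d$ is the max-norm~\eqref{eq:Rd-norm}, the projectors satisfy $\|\pcs(x)\| \le \|x\|$ and $\|\pu(x)\| \le \|x\|$. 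Consequently, $\Lip(\pcs\circ(g-T)) \le \varepsilon$ and $\Lip(\pu\circ(g-T)) \le \varepsilon$ by~\aref{assumption:small-lipschitz}, applied globally. Third, again by the max-norm, $\|(y,z)-(y',z')\| = \max(\|y-y'\|,\|z-z'\|)$.

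For the $z$-variable, fix $y$ and take $z,z' \in \Eu$. The key step is to split $\gcs = \pcs\circ T + \pcs\circ(g-T)$. By invariance~\aref{assumption:invariance}, $\pcs(T(y,z)) = T y$ does not depend on $z$. Hence $\|\gcs(y,z)-\gcs(y,z')\| \le \varepsilon \|z-z'\|$, and since $\Lip(\varphi)\le 1$, the first term changes by at most $\varepsilon\|z-z'\|$. The second term, $(\gu-\Tu)(y,\cdot)$, changes by at most $\varepsilon\|z-z'\|$. Combining these with the factor $1/\mu$ gives $\Lip(z\mapsto \hyvarphi(z)) \le 2\varepsilon/\mu$. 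This is $<1$ because $\varepsilon < (\mu-\lambda)/4 < \mu/2$, using $\lambda \ge 1 > 0$.

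For the $y$-variable, fix $z$ and take $y,y'\in\Ecs$. Now $\pcs(T(y,z)) - \pcs(T(y',z)) = T(y-y')$, which by~\aref{assumption:pseudo-hyperbolic} has norm at most $\lambda\|y-y'\|$. The perturbation part contributes at most $\varepsilon\|y-y'\|$, so $\|\gcs(y,z)-\gcs(y',z)\| \le (\lambda+\varepsilon)\|y-y'\|$. Applying $\varphi$ preserves this bound. The $(\gu-\Tu)$ term contributes at most $\varepsilon\|y-y'\|$. Altogether, with the factor $1/\mu$, this gives $(\lambda+2\varepsilon)/\mu$.

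The only delicate point is using the invariance~\aref{assumption:invariance} to see that the linear part of $\gcs$ depends on $y$ alone, and the norm bounds on the projectors coming from the max-norm. Everything else is a routine triangle inequality. The hypothesis $\varphi(0)=0$ is not needed for these Lipschitz estimates; it will matter only later, for showing $\tilde\varphi(0)=0$.
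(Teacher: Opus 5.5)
Your proposal is correct and follows essentially the same route as the paper's proof. You bound through $\|(T\vert_{\Eu})^{-1} w\| \le \|w\|/\mu$ (the paper writes the equivalent $\mu\|\hyvarphi(z)-\hyvarphi(z')\| \le \|T\vert_{\Eu}(\hyvarphi(z)-\hyvarphi(z'))\|$), split $g = T + (g-T)$, and use invariance so that the linear part of $\gcs$ depends only on $y$. Your extra remarks are also right: $2\varepsilon/\mu<1$ does follow from $\varepsilon<(\mu-\lambda)/4$, and $\varphi(0)=0$ is indeed not used in this lemma.
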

\noindent
The proof is in Appendix~\ref{sec:proof_lemma:h_y_phi_lipschitz}.

\subsection{Function space setup for the graph transform} \label{section:function-space}

For a function $\varphi \colon \Ecs \to \Eu$, define the quantity 
\begin{align} \label{eq:func-norm}
    \| \varphi \| \coloneq \sup_{y \neq 0} \frac{\| \varphi(y) \|}{\| y \|}.
\end{align}
Consider the vector space 
\begin{align*}
    \calF = \{ \varphi \colon \Ecs \to \Eu \mid \varphi(0) = 0 \textrm{ and } \| \varphi \| < \infty \}.   
\end{align*}
On $\calF$, the quantity $\| \cdot \|$ defines a norm.
\ifbool{BanachAppendix}{
   Equipped with this norm, $\calF$ is a complete metric space (that is, a Banach space; see Proposition~\ref{prop:banach-space}).
}{
   Equipped with this norm, $\calF$ is a complete metric space (that is, a Banach space).
}%
We next focus on the subset
\begin{align} \label{eq:func-norm-1}
    \calF_1 = \{ \varphi \in \calF \mid \Lip(\varphi) \leq 1 \}.
\end{align}
It is closed in $\calF$. 
For a pseudo-hyperbolic pair $(g, T)$, define the \emph{graph transform map} 
\begin{align} \label{eq:graph-transform-map}
    \Gamma \colon \calF_1 \to \calF_1, \quad\quad
    (\Gamma \varphi)(y) = \Fix(h_y^\varphi),
\end{align}
where $h_y^\varphi$ is the auxiliary map associated with $\varphi$ at $y$~\eqref{eq:h-phi-y-definition} and $\Fix(h_y^\varphi)$ denotes the unique fixed point of $h_y^\varphi$ (well defined owing to Lemma~\ref{lemma:h_y_phi_lipschitz}).

By design (recall Eq.~\eqref{eq:graph-invariance}), the graph transform $\Gamma$ satisfies the invariance property
\begin{align} \label{eq:graph-invariance-gamma}
    x \in \graph(\Gamma \varphi) \quad \implies \quad g(x) \in \graph(\varphi).
\end{align}
The following lemma states that $\Gamma$ indeed maps $\calF_1$ into $\calF_1$ and that it is a contraction in the norm from Eq.~\eqref{eq:func-norm}.
The proof follows the classical arguments in~\citep[Thm.~5.1]{hirsch1977invariant}: see Appendix~\ref{subsec:gamma-lipschitz}.

\begin{lemma}  \label{lemma:gamma-lipschitz}
    Let $(g, T) \in \mathrm{PH}(\mu, \lambda, \varepsilon; \reals^d, \Ecs \oplus \Eu)$ and let $\Gamma \colon \calF_1 \to \calF_1$ be the associated graph transform map~\eqref{eq:graph-transform-map}.
    Then, $\Gamma$ is well defined and it is a contraction with respect to the norm defined in~\eqref{eq:func-norm}, with Lipschitz constant
    $ \Lip(\Gamma) \leq (\lambda + \varepsilon) / (\mu - 2 \varepsilon) < 1 $. 
\end{lemma}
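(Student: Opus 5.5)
We need to prove Lemma~\ref{lemma:gamma-lipschitz} in three parts: $\Gamma\varphi$ lies in $\calF_1$, the contraction bound, and the strict inequality $<1$.

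\emph{Well-definedness and membership in $\calF_1$.} Fix $\varphi \in \calF_1$. By Lemma~\ref{lemma:h_y_phi_lipschitz}, each $z \mapsto \hyvarphi(z)$ is a contraction on the complete space $\Eu$, so $\tilde\varphi(y) \coloneqq \Fix(\hyvarphi)$ exists and is unique.

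\begin{itemize}
    \item \emph{Fixes the origin.} By \aref{assumption:fixed-origin} and $T0=0$, we have $g(0,0)=0$ and $(g_u-T_u)(0,0)=0$. Together with $\varphi(0)=0$ this gives $h_0^\varphi(0)=0$, so $\tilde\varphi(0)=0$.
    \item \emph{Lipschitz bound.} Use the standard parametric fixed-point estimate. For $y,y'$ write $z=\tilde\varphi(y)$ and $z'=\tilde\varphi(y')$. Then
    \[
    \|z-z'\| \le \|h^\varphi_y(z)-h^\varphi_{y'}(z)\| + \|h^\varphi_{y'}(z)-h^\varphi_{y'}(z')\| \le \tfrac{\lambda+2\varepsilon}{\mu}\|y-y'\| + \tfrac{2\varepsilon}{\mu}\|z-z'\|.
    \]
    Rearranging gives $\Lip(\tilde\varphi) \le (\lambda+2\varepsilon)/(\mu-2\varepsilon)$.
    \item \emph{The constant is at most $1$.} This holds since $\lambda+4\varepsilon<\mu$ by \aref{assumption:small-lipschitz}.
\end{itemize}

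Hence $\Gamma\varphi\in\calF_1$. Finiteness of $\|\Gamma\varphi\|$ then follows from the Lipschitz bound and $\Gamma\varphi(0)=0$.

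\emph{Contraction.} Take $\varphi,\psi\in\calF_1$ and fix $y\ne0$. Set $z=\Gamma\varphi(y)$ and $w=\Gamma\psi(y)$. Since $\|(T|_{\Eu})^{-1}\|\le 1/\mu$ by \aref{assumption:pseudo-hyperbolic},
\[
\|z-w\| = \|h^\varphi_y(z)-h^\psi_y(w)\| \le \tfrac1\mu\Big(\|\varphi(\gcs(y,z))-\psi(\gcs(y,w))\| + \varepsilon\|z-w\|\Big),
\]
where the $\varepsilon\|z-w\|$ term comes from the $(g_u-T_u)$ difference under the max-norm. Split the first term as
\[
\|\varphi(\gcs(y,z))-\psi(\gcs(y,z))\| + \|\psi(\gcs(y,z))-\psi(\gcs(y,w))\| \le \|\varphi-\psi\|\,\|\gcs(y,z)\| + \|\gcs(y,z)-\gcs(y,w)\|.
\]
Here $\|\gcs(y,z)-\gcs(y,w)\|\le\varepsilon\|z-w\|$, because $\Tcs$ annihilates the $\Eu$ component by invariance, so only $g-T$ contributes.

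It remains to bound $\|\gcs(y,z)\|$. We have $\gcs(y,z)=Ty+\pcs(g-T)(y,z)$, whose norm is at most $\lambda\|y\|+\varepsilon\max(\|y\|,\|z\|)$. Since $\Gamma\varphi$ is $1$-Lipschitz with $\Gamma\varphi(0)=0$, we get $\|z\|\le\|y\|$, hence $\|\gcs(y,z)\|\le(\lambda+\varepsilon)\|y\|$. Combining,
\[
\|z-w\|(\mu-2\varepsilon) \le (\lambda+\varepsilon)\|\varphi-\psi\|\,\|y\|.
\]
Dividing by $\|y\|$ and taking the supremum gives $\|\Gamma\varphi-\Gamma\psi\|\le\frac{\lambda+\varepsilon}{\mu-2\varepsilon}\|\varphi-\psi\|$. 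This constant is $<1$ because $\lambda+3\varepsilon<\mu$.

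The main obstacle is the bookkeeping of projections under the max-norm, in two places.
\begin{itemize}
    \item One must check that $\Lip(\pcs\circ(g-T))$ and $\Lip(\pu\circ(g-T))$ are each at most $\varepsilon$. This holds because the norm of each component is at most the max-norm.
    \item The bound $\|\gcs(y,z)\|\le(\lambda+\varepsilon)\|y\|$ requires using $\|z\|\le\|y\|$, i.e.\ it uses the already established membership $\Gamma\varphi\in\calF_1$. So that step must come first, as in the ordering above.
\end{itemize}
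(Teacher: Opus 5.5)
Your proof is correct and follows essentially the same route as the paper. It bounds $\Lip(\Gamma\varphi)$ by the parametric fixed-point estimate, which you inline and the paper cites as Lemma~\ref{lemma:uniform-contraction}, and it then runs the same contraction argument, using that $\Tcs$ kills the $\Eu$-component and that $\|(\Gamma\varphi)(y)\|\le\|y\|$. The only differences are cosmetic: you add and subtract $\psi(\gcs(y,z))$ instead of $\varphi_1(\gcs(y,z_2))$, and you explicitly note that $\|\Gamma\varphi\|<\infty$.
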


Since $\calF_1$ is complete (as a closed subset of $\calF$), the fact that $\Gamma$ is a contraction implies the existence of a (unique) function $\varphi \in \calF_1$ such that $\Gamma \varphi = \varphi$.
Combined with Eq.~\eqref{eq:graph-invariance-gamma}, this reveals what would be the sought invariance property in the \emph{autonomous} setting:
\begin{align*}
    x \in \graph(\varphi) \quad \implies \quad g(x) \in \graph(\varphi).
\end{align*}
In the non-autonomous setting, however, the situation changes: at each time step $k$, the map $g_k$ is different, so the associated graph transform $\Gamma_k$ also changes with $k$.
This is why in Section~\ref{section:graph-transform-nonautonomous} we repurpose these constructions.

\subsection{The potential}

For any function $\varphi \in \calF_1$, define the potential function $V_\varphi \colon \reals^d \to \reals$ by
\begin{align} \label{eq:potential-def}
    V_\varphi(x) = \| \pu(x) - \varphi(\pcs(x)) \|. 
\end{align}
This quantity measures how much a point $x \in \reals^d$ departs from the graph of $\varphi$.
Recall the invariance property in Eq.~\eqref{eq:graph-invariance-gamma}.
The next lemma shows that, unless $x$ lies on the graph of $\Gamma \varphi$, the potential at $g(x)$ is strictly larger than the potential at $x$, albeit with the former measured with respect to $V_\varphi$ and the latter with respect to $V_{\Gamma \varphi}$.
The proof is included in Appendix~\ref{sec:proof-lemma-potential-growth}.

\begin{lemma} \label{lemma:potential-growth}
    Let $(g, T) \in \mathrm{PH}(\mu, \lambda, \varepsilon; \reals^d, \Ecs \oplus \Eu)$ and let $\Gamma$ be the associated graph transform map~\eqref{eq:graph-transform-map}.
    Then, for any $\varphi \in \calF_1$ and any $x \in \reals^d$, 
    \begin{equation*}
        V_\varphi(g(x)) \geq (\mu - 2 \varepsilon) V_{\Gamma \varphi}(x),
    \end{equation*}
    where the multiplier satisfies $\mu - 2\varepsilon > 1$.
\end{lemma}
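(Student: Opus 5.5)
Set $\psi \coloneqq \Gamma\varphi$ and write $x = (y,z)$ with $y = \pcs(x)$ and $z = \pu(x)$. Put $z^\ast \coloneqq \psi(y) = \Fix(h_y^\varphi)$ and $x^\ast \coloneqq (y, z^\ast) \in \graph(\psi)$. Then $V_\psi(x) = \|z - z^\ast\|$. By the invariance property~\eqref{eq:graph-invariance-gamma}, $g(x^\ast) \in \graph(\varphi)$, that is, $\gu(x^\ast) = \varphi(\gcs(x^\ast))$. The plan is to compare $g(x)$ with $g(x^\ast)$, since the two points differ only in the unstable coordinate:
\begin{align*}
    V_\varphi(g(x)) = \|\gu(x) - \varphi(\gcs(x))\| = \bigl\|\bigl(\gu(x) - \gu(x^\ast)\bigr) - \bigl(\varphi(\gcs(x)) - \varphi(\gcs(x^\ast))\bigr)\bigr\|.
\end{align*}

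Next, split $g = T + (g - T)$. By invariance~\aref{assumption:invariance}, $\gu(x) - \gu(x^\ast) = T\vert_{\Eu}(z - z^\ast) + \bigl((g_u - T_u)(x) - (g_u - T_u)(x^\ast)\bigr)$. By~\aref{assumption:pseudo-hyperbolic}, the first term has norm at least $\mu\|z - z^\ast\|$. By~\aref{assumption:small-lipschitz}, and since the max-norm gives $\|x - x^\ast\| = \|z - z^\ast\|$ and $\pu$ is nonexpansive for the max-norm, the second term has norm at most $\varepsilon\|z - z^\ast\|$.

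For the $\varphi$-difference, use $\Lip(\varphi) \leq 1$. The key point is that $\pcs(T(x - x^\ast)) = T(0, z - z^\ast)_{\mathrm{cs}} = 0$ by invariance, so $\gcs(x) - \gcs(x^\ast) = (g_{\mathrm{cs}} - T_{\mathrm{cs}})(x) - (g_{\mathrm{cs}} - T_{\mathrm{cs}})(x^\ast)$, which has norm at most $\varepsilon\|z - z^\ast\|$. The reverse triangle inequality then yields
\begin{align*}
    V_\varphi(g(x)) \geq (\mu - \varepsilon - \varepsilon)\|z - z^\ast\| = (\mu - 2\varepsilon) V_{\Gamma\varphi}(x).
\end{align*}

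Finally, $\mu - 2\varepsilon > 1$ follows from $\varepsilon < (\mu - \lambda)/4$ and $\lambda \geq 1$: indeed $\mu - 2\varepsilon > \mu - (\mu - \lambda)/2 = (\mu + \lambda)/2 > \lambda \geq 1$. The only delicate step is the cancellation of the linear part in the center-stable component, which relies on the common invariant splitting; everything else is routine triangle inequalities with respect to the max-norm.
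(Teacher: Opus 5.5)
Your proposal is correct and matches the paper's proof step for step: the paper also compares $g(x)$ with $g(x')$ for $x' = (y, (\Gamma\varphi)(y)) \in \graph(\Gamma\varphi)$. It bounds the unstable difference below by $(\mu-\varepsilon)\|x-x'\|$ and the $\varphi$-difference above by $\varepsilon\|x-x'\|$, using that $\pcs \circ T$ vanishes on the purely unstable displacement; your check that $\mu - 2\varepsilon > 1$ from $\varepsilon < (\mu-\lambda)/4$ and $\lambda \geq 1$ is likewise the same.
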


In the autonomous setting, this is most interesting when $\varphi$ is the fixed point of $\Gamma$, as then we can say more succinctly: the potential $V_\varphi$ along a trajectory $(x_k)_{k \geq 0}$ is either constant (zero), or it diverges exponentially.
If the trajectory itself remains bounded, then continuity of $V_\varphi$ ensures $( V_\varphi(x_k) )_{k \geq 0}$ is bounded as well, which forces the entire sequence $(x_k)$ to be on $\graph(\varphi)$.
This is how one can conclude that $\graph(\varphi)$ is a center-stable set for the unstable fixed point (the origin).
This graph has measure zero since $\varphi$ is Lipschitz continuous.

\section{Exploiting the graph transform with non-autonomous systems} \label{section:graph-transform-nonautonomous}

We are now ready to prove Theorem~\ref{theorem:local-csmt-generic} by adapting the classical graph transform argument from the previous section.
Recall the norm on $\reals^d$ is as in Eq.~\eqref{eq:Rd-norm}.

Importantly, in the non-autonomous setting the iteration map $g_k$ varies with $k$, so there is no single transform map $\Gamma$ with a fixed point $\varphi$.
Instead, each pseudo-hyperbolic pair $(g_k,T_k)$ induces its own graph transform $\Gamma_k$~\eqref{eq:graph-transform-map}.
Accordingly, we aim to construct a sequence $(\varphi_k)_{k\ge 0}$ of Lipschitz functions satisfying the invariance property\footnote{Equivalently, $g_k \big(\graph(\varphi_{k}) \big)\subseteq \graph(\varphi_{k+1})$.}
\begin{align} \label{eq:graphinvarianceproofmainthm}
    x \in \graph(\varphi_{k}) \quad \implies \quad g_k(x) \in \graph(\varphi_{k+1}).
\end{align}
Comparing with~\eqref{eq:graph-invariance-gamma}, we see that this requires $\Gamma_k \varphi_{k+1} = \varphi_k$: this cannot be used directly to build the sequence $(\varphi_k)$, which is why the proof proceeds in a different manner.

With this in mind, let us start the proof.
The space of functions 
$\calF_1 = \{ \varphi \colon \Ecs \to \Eu \mid \varphi(0) = 0, \| \varphi \| < \infty, \; \Lip(\varphi) \leq 1 \}$
is the same as before (Eq.~\eqref{eq:func-norm-1}).
For each pseudo-hyperbolic pair $(g_k, T_k)$, let $\Gamma_{k} \colon \calF_1 \to \calF_1$ be the associated graph transform map as defined in Eq.~\eqref{eq:graph-transform-map}.
From Lemma~\ref{lemma:gamma-lipschitz}, each $\Gamma_k$ is Lipschitz with constant $\Lip(\Gamma_k) \leq (\lambda_k + \varepsilon_k) / (\mu_k - 2 \varepsilon_k)$.

\paragraph{General potential growth inequality.}

Fix some $\varphi \in \calF_1$, arbitrary for now.
If $(x_k)_{k \geq 0}$ is a sequence such that $x_{k+1} = g_k(x_k)$ for all $k \geq \bar{k}$ (for some $\bar{k} \geq 0$), then unrolling the potential growth inequality from Lemma~\ref{lemma:potential-growth} yields
\begin{align*}
    V_\varphi(x_{k + 1}) = 
    V_\varphi(g_k(x_k))
    & \geq (\mu_k - 2 \varepsilon_k) \; V_{\Gamma_k \varphi} (x_k) \\ 
    & = (\mu_k - 2 \varepsilon_k) \; V_{\Gamma_k \varphi} (g_{k-1}(x_{k-1})) \\ 
    & \geq (\mu_k - 2 \varepsilon_k) (\mu_{k-1} - 2 \varepsilon_{k-1}) \; V_{(\Gamma_{k-1} \circ \Gamma_{k}) \varphi} (g_{k-2}(x_{k-2})) \\ 
    & \geq \; \cdots \\ 
    & \geq  (\mu_k - 2 \varepsilon_k) (\mu_{k-1} - 2 \varepsilon_{k-1}) \cdots (\mu_{\bar{k}} - 2 \varepsilon_{\bar{k}}) \; V_{(\Gamma_{\bar{k}} \circ \dots \circ \Gamma_{k-1} \circ \Gamma_k) \varphi}(x_{\bar{k}}).
\end{align*}
Since this inequality holds for all $\varphi \in \calF_1$, it holds in particular for the zero function $0 \in \calF_1$.
Then, for all $k \geq \bar{k}$,
\begin{align} \label{eq:pu_bounded}
    \| \pu(x_{k+1}) \| = 
    V_0 (x_{k+1}) 
    \geq (\mu_k - 2 \varepsilon_k) \cdots (\mu_{\bar{k}} - 2 \varepsilon_{\bar{k}}) V_{\varphi_{\bar{k}, k}}(x_{\bar{k}}),
\end{align}
where we defined the following functions (independent of the sequence $(x_k)_{k \geq 0}$):
\begin{align}
    \varphi_{\bar{k}, k} \coloneqq (\Gamma_{\bar{k}} \circ \cdots \circ \Gamma_{k-1} \circ \Gamma_k)(0) && \textrm{for all } k \geq \bar{k}.
    \label{eq:varphibarkk}
\end{align}
These are indeed in $\calF_1$ because each map $\Gamma_i$ maps $\calF_1$ to $\calF_1$.

\paragraph{First use of the non-summability assumption.}

Let us show that the product of factors $(\mu_i - 2 \varepsilon_i)$ in~\eqref{eq:pu_bounded} diverges to infinity.
Recall from Definition~\ref{def:pseudo-hyperbolicity} that $1 \leq \lambda_k < \mu_k$ and $\mu_k - \lambda_k > 4\varepsilon_k$ (from \aref{assumption:small-lipschitz}).
Combined, these yield $\mu_k - 2 \varepsilon_k - 1 > 2 \varepsilon_k > 0$ for all $k$.
Used together with the non-summability assumption and $\log(t) \geq \frac{t-1}{t}$ for all $t > 1$, we obtain
\begin{align} \label{eq:nonsummabilityfirstuse}
    \sum_{k=\bar{k}}^\infty \log(\mu_k - 2 \varepsilon_k) \geq 
    \sum_{k=\bar{k}}^\infty \frac{\mu_k - 2 \varepsilon_k - 1}{\mu_k - 2 \varepsilon_k} \geq 
        \sum_{k=\bar{k}}^\infty \frac{\varepsilon_k}{\mu_k - 2 \varepsilon_k} = 
    \infty.
\end{align}
Taking exponentials, we find $\prod_{k = \bar{k}}^\infty (\mu_k - 2 \varepsilon_k) = \infty$ as announced.

\paragraph{Implications for bounded trajectories.}

If the sequence $( x_k )_{k \geq 0}$ is bounded, there exists $r \geq 0$ such that $\|\pu(x_{k+1})\| \leq \|x_{k+1}\| \leq r$ for all $k$.
Combined with~\eqref{eq:pu_bounded} and~\eqref{eq:nonsummabilityfirstuse},
this implies
\begin{align}
    V_{\varphi_{\bar{k}, k}}(x_{\bar{k}}) = \| \pu(x_{\bar{k}}) - \varphi_{\bar{k}, k}(\pcs(x_{\bar{k}})) \| \xrightarrow[k \to \infty]{} 0. 
    \label{eq:limitVforboundedseq}
\end{align}
Thus, $\pu(x_{\bar{k}}) = \lim_{k \to \infty} \varphi_{\bar{k}, k} (\pcs(x_{\bar{k}}))$.
This brings us to the central question: for each fixed $\bar{k} \geq 0$, does the sequence of functions $(\varphi_{\bar{k}, k})_{k \geq \bar{k}}$ have a (nice) limit?
It does (as we show next), upon which it follows from~\eqref{eq:limitVforboundedseq} that $x_{\bar{k}}$ is on its graph.

\paragraph{Building the functions $\varphi_0, \varphi_1, \ldots$}

Tentatively define $\varphi_{\bar{k}} \coloneqq \lim_{k \to \infty} \varphi_{\bar{k}, k} $.
It remains to show that $\varphi_{\bar{k}}$ exists and that it belongs to $\calF_1$.
To this end, we show the sequence $( \varphi_{\bar{k}, \bar{k} + t} )_{t \geq 0} $ is Cauchy in $\calF_1$.
Let $\bar{k} \leq i < j$.
By definition of $\varphi_{\bar{k}, i}$, we have that 
\begin{align} \label{eq:varphi-sequence-cauchy}
    \| \varphi_{\bar{k}, j} - \varphi_{\bar{k}, i} \|
    &= \| ( \Gamma_{\bar{k}} \circ \dots \circ \Gamma_{i} \circ \Gamma_{i+1} \circ \dots \circ \Gamma_j) (0) - ( \Gamma_{\bar{k}} \circ \dots \circ \Gamma_{i} ) (0) \| \notag \\ 
    &\leq \Lip \big( \Gamma_{\bar{k}} \circ \cdots \circ \Gamma_{i} \big) \; \| (\Gamma_{i+1} \circ \cdots \circ \Gamma_j) (0) \| \notag \\ 
    &\leq \Lip(\Gamma_{\bar{k}} ) \cdots \Lip(\Gamma_{i}) \; \| \varphi_{i+1, j} \| \notag \\ 
    &\leq \Lip(\Gamma_{\bar{k}} ) \cdots \Lip(\Gamma_{i}), 
\end{align}
where in the last step we used that $\varphi_{i+1, j}$ is in $\calF_1$, and that for any $\varphi \in \calF_1$ it holds that 
\begin{align*}
    \| \varphi \| = 
    \sup_{y \neq 0} \frac{\| \varphi(y) \|}{\| y \|} =
    \sup_{y \neq 0} \frac{\| \varphi(y) - \varphi(0) \|}{\| y \|} \leq
    \Lip(\varphi) \leq 1.
\end{align*}
Let us show that the right-hand side of Eq.~\eqref{eq:varphi-sequence-cauchy} goes to zero as $i \to \infty$.
Once we secure this, we obtain for each $\bar{k} \geq 0$ that the sequence $( \varphi_{\bar{k}, k} )_{k \geq \bar{k}}$ is Cauchy in a complete metric space, and thus its limit (which we preemptively called $\varphi_{\bar{k}}$) exists and belongs to $\calF_1$.

\paragraph{Second use of the non-summability assumption.}

By Lemma~\ref{lemma:gamma-lipschitz} and using $\log(t) \leq - (1-t)$ for all $t \in (0, 1)$ we have
\begin{align*}
    \log\!\Bigg( \prod_{k = \bar{k}}^i \Lip(\Gamma_k) \Bigg) \leq
    \sum_{k = \bar{k}}^i \log\!\Bigg( \frac{\lambda_{k} + \varepsilon_{k}}{\mu_{k} - 2 \varepsilon_{k} } \Bigg) \leq
    - \sum_{k = \bar{k}}^i \Bigg( 1 - \frac{\lambda_{k} + \varepsilon_{k}}{\mu_{k} - 2 \varepsilon_{k} } \Bigg) =
    - \sum_{k = \bar{k}}^i \frac{\mu_k - \lambda_k - 3 \varepsilon_k}{\mu_k - 2 \varepsilon_k}. 
\end{align*}
Again using $\mu_k - \lambda_k - 3 \varepsilon_k \geq \varepsilon_k$ from~\aref{assumption:small-lipschitz} as well as the non-summability assumption, this last series diverges to $ - \infty$ as $i \to \infty$.
Therefore, we have confirmed that $\prod_{k = \bar{k}}^\infty \Lip(\Gamma_k) = 0$.

\paragraph{Conclusion.}

For each $\bar{k} \geq 0$ we showed $\varphi_{\bar{k}} \coloneqq \lim_{k \to \infty} \varphi_{\bar{k},k}$ is a valid function in $\calF_1$: this provides us with a sequence of Lipschitz functions $\varphi_0, \varphi_1, \ldots$ as promised.
From~\eqref{eq:limitVforboundedseq}, we see that if $(x_k)_{k \geq 0}$ is any bounded sequence satisfying $x_{k+1} = g_k(x_k)$ for all $k \geq \bar{k}$, then $x_{\bar{k}}$ belongs to the graph of $\varphi_{\bar{k}}$.
Thus, the proof of Theorem~\ref{theorem:local-csmt-generic} is complete.

For good measure, let us connect this back to the driving intuition stated as the invariance condition~\eqref{eq:graphinvarianceproofmainthm}:
since $\varphi_{\bar{k}, k} = \Gamma_{\bar{k}} \varphi_{\bar{k}+1, k}$, passing to the limit $k \to \infty$ reveals $\varphi_{\bar{k}} = \Gamma_{\bar{k}} \varphi_{\bar{k}+1}$.
Thus,
\begin{align*}
    x \in \graph(\varphi_{k}) \iff x \in \graph(\Gamma_{k} \varphi_{k+1}) \overset{\eqref{eq:graph-invariance-gamma}}{\implies} g_{k}(x) \in \graph(\varphi_{k+1}) && \textrm{ for all } k \geq 0,
\end{align*}
as desired.

\begin{remark}
    By direct inspection of the two uses of the non-summability assumption in the proof of Theorem~\ref{theorem:local-csmt-generic}, it is clear that the assumption can be slightly relaxed to (both)
    \begin{align*}
        \sum_{k=0}^\infty \frac{\mu_k - 2 \varepsilon_k - 1 }{\mu_k - 2 \varepsilon_k} = + \infty 
        \quad\qquad \textrm{and} \quad\qquad 
        \sum_{k=0}^\infty \frac{\mu_k - \lambda_k - 3 \varepsilon_k}{\mu_k - 2 \varepsilon_k} = + \infty.
    \end{align*}
\end{remark}

\begin{remark}
    Theorem~\ref{theorem:local-csmt-generic} guarantees the existence of useful functions $\varphi_0, \varphi_1, \ldots$.
    These are Lipschitz, hence their graphs have zero measure: this is enough regularity for saddle avoidance purposes.
    For contrast, the \emph{autonomous} center-stable \emph{manifold} theorem further ensures that the limit function $\varphi$ (and hence its graph) is smooth~\citep[Thm.~5.1]{hirsch1977invariant}.
\end{remark}

\section{Gradient descent with vanishing step sizes} \label{section:gd}

As an application of Theorem~\ref{thm:global-avoidance-nonautonomous}, we study GD with various step-size schedules, that is, the dynamical system $(g_k)_{k \geq 0}$ with positive step sizes $(\alpha_k)_{k \geq 0}$ and
\begin{align}
    g_k(x) = x - \alpha_k \nabla f(x),
    \tag{GD}
    \label{eq:GD}
\end{align}
where $f$ is a $C^2$ cost function.
The following is the main result of this section.
It holds for a wider variety of step-size schedules (ruled by the conditions of Proposition~\ref{proposition:GD-admissible-corollary}): we single out a few important ones for illustration.
Notice $\nabla f$ need not be Lipschitz, and the step sizes may be arbitrarily large.

\begin{theorem} \label{thm:gd-vanishing-theorem}
    Let $f \colon \reals^d \to \reals$ be $C^2$.
    Consider~\eqref{eq:GD} with given $\alpha_0 > 0$ and one of the following step-size schedules for $k \geq 1$:
    \begin{itemize}
        \item Constant: $\alpha_k = \alpha_0$.
        \item Polynomial decay: $\alpha_k = \frac{\alpha_0}{(k+1)^\gamma}$ for some $\gamma \in (0, 1]$.
        \item Cosine decay: $\alpha_k = \frac{\alpha_0}{(k+1)^\gamma} \Big( 1 + \cos\!\big( \frac{\pi (k + \frac{1}{2})}{2 T + 1} \big) \Big)$ for some $\gamma \in (0, 1]$ and integer $T \geq 0$.
    \end{itemize}
    Then, for each fixed choice of the auxiliary parameters ($\gamma, T$) and for almost all $\alpha_0 > 0$, \eqref{eq:GD} avoids the strict saddle points of $f$.
\end{theorem}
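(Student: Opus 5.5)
The plan is to apply Theorem~\ref{thm:global-avoidance-nonautonomous}. Two things must be checked: (i) every strict saddle $x^\ast$ of $f$ is an NPH unstable fixed point of the system $g_k(x) = x - \alpha_k \nabla f(x)$; (ii) for almost every $\alpha_0 > 0$, every $g_k$ has the Luzin $N^{-1}$ property. The conclusion then follows immediately.

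For (i), let $H = \nabla^2 f(x^\ast)$ and take $T_k = I - \alpha_k H$. All $T_k$ are polynomials in the same symmetric $H$, so they share one invariant splitting. Take $\Eu$ to be the span of eigenvectors of $H$ with negative eigenvalues and $\Ecs$ its orthogonal complement, both with Euclidean norms. Let $-\eta < 0$ be the largest negative eigenvalue of $H$ and $\beta \geq 0$ the largest magnitude among the nonpositive part of the spectrum. Then on $\Eu$ we have $\|T_k z\| \geq (1 + \alpha_k \eta)\|z\|$. On $\Ecs$ the bound is only $\|T_k y\| \leq \max(1, |1-\alpha_k \lambda_{\max}(H)|)\|y\|$.

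For large $k$ the steps are small, so $\lambda_k = 1$ and $\mu_k = 1 + \alpha_k\eta$ work. For the constant schedule, $\alpha_k \eta$ may be small relative to $|1-\alpha_0\lambda_{\max}|$, so a better split is needed. Move into $\Eu$ every eigendirection whose eigenvalue $\nu$ of $I - \alpha_0 H$ has $|\nu| > 1$. Take $\lambda = 1$ and $\mu = \min_{|\nu|>1}|\nu|$. Since $\alpha_0$ can be chosen off the countable set where some $|1 - \alpha_0 \nu_H| = 1$, this gives $\mu > \lambda$. That case is then the classical autonomous one.

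The small-Lipschitz condition comes from continuity of $\nabla^2 f$. On $B_r(x^\ast)$, $\Lip(g_k - x^\ast - T_k) \leq \alpha_k \omega(r)$, where $\omega$ is the modulus of continuity of $\nabla^2 f$ at $x^\ast$; the max-norm versus Euclidean norm only costs a constant factor. Choose $r$ with $\omega(r) \leq c\,\eta$ for a small $c$, and set $\varepsilon_k = 4\alpha_k\omega(r)$. Then $\varepsilon_k < (\mu_k-\lambda_k)/4 = \alpha_k\eta/4$. Non-summability reduces to $\sum_k \alpha_k/(1+\alpha_k\eta) = \infty$. Since $\alpha_k \to 0$ or stays constant, this is equivalent to $\sum \alpha_k = \infty$. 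That holds for $\gamma \leq 1$ and for the cosine schedule, because $1+\cos(\cdot) \geq 0$ and is bounded below by a positive constant on a positive fraction of indices (the argument cycles with period $4(2T+1)$). The cosine factor can vanish at specific $k$ only if $\pi(k+\frac12)/(2T+1)$ is an odd multiple of $\pi$, i.e.\ $2k+1 = (2T+1)(2m+1)$. At such $k$ we get $\alpha_k = 0$ and $g_k = \mathrm{id}$. We must allow zero steps, or rather choose $K$ and verify that steps with $\alpha_k=0$ still satisfy \aref{assumption:pseudo-hyperbolic}; they do not when $\mu_k > 1$. I would handle this by grouping: define the system as the composition over consecutive indices, or simply note that identity steps can be deleted from the trajectory without changing convergence. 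Deleting them reindexes the system, and the remaining sum $\sum\alpha_k$ still diverges.

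For (ii), by Lemma~\ref{lemma:luzin-verify} we need $\det(I - \alpha_k \nabla^2 f(x)) \neq 0$ for almost every $x$, i.e.\ $1/\alpha_k$ is not an eigenvalue of $\nabla^2 f(x)$ on a positive-measure set. Let $A_t = \{x : \det(I - t\nabla^2 f(x)) = 0\}$. I expect this to be the main obstacle. The plan is a Fubini argument on $\reals^d \times (0,\infty)$. For each $x$, $A_t \ni x$ holds for at most $d$ values of $t$, so the set $\{(x,t): x \in A_t\}$ has measure zero. Hence $\mu(A_t) = 0$ for almost every $t$, and the set $N$ of bad $t$ is null. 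Each $\alpha_k = \alpha_0 c_k$ with $c_k > 0$ fixed by the schedule, so $\alpha_0 \in N/c_k$ is a null condition. Over countably many $k$, almost every $\alpha_0$ is good for all $k$ at once. Together with the countably many exclusions from the constant case, this completes the argument.
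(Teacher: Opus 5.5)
Your plan matches the paper's proof. You invoke Theorem~\ref{thm:global-avoidance-nonautonomous} after checking two things. First, strict saddles are NPH unstable, using the common eigen-splitting of $T_k = I - \alpha_k \nabla^2 f(x^\ast)$ with $\lambda_k = 1$, $\mu_k = 1 + c\alpha_k$, $\varepsilon_k \propto \alpha_k$, and non-summability inherited from $\sum_k \alpha_k = \infty$. Second, every $g_k$ has the Luzin $N^{-1}$ property for almost every $\alpha_0$, by pulling a null set of bad step sizes back through $\alpha_0 \mapsto c_k \alpha_0$. Your Fubini argument for the second part correctly proves the fact the paper imports from prior work. Note that $\{(x,t) : \det(I - t\nabla^2 f(x)) = 0\}$ is relatively closed in $\reals^d \times (0,\infty)$, hence measurable, and each $x$-section has at most $d$ points. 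Three slips need fixing.

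\emph{Cosine schedule.} The factor $1 + \cos(\cdot)$ never vanishes. The argument $\pi(k+\frac12)/(2T+1)$ equals $(2m+1)\pi$ only if $2k+1 = 2(2T+1)(2m+1)$, which is impossible by parity. Your condition $2k+1 = (2T+1)(2m+1)$ locates the zeros of the cosine, not the points where it equals $-1$. The arguments are odd multiples of $\frac{\pi}{2(2T+1)}$, while odd multiples of $\pi$ are even multiples of that unit. Hence $1 + \cos(\cdot) \geq 1 - \cos\big(\frac{\pi}{2(2T+1)}\big) > 0$ for every $k$. So all steps are positive and non-summability is immediate. The detour about deleting identity steps should go; it also clashes with your later use of $c_k > 0$.

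\emph{Constant schedule.} No exclusion of $\alpha_0$ is needed. Put every eigendirection with $|1 - \alpha_0 h| \leq 1$, equality included, into $\Ecs$ with $\lambda = 1$. Then $\mu = \min_{|\nu|>1} |\nu| > 1$ automatically, as a minimum of finitely many numbers exceeding $1$; this is exactly the paper's admissibility condition. Nor could the exclusion be made as you state it. The excluded values depend on the saddle, and with non-isolated saddles their union need not be countable or even null. For $f(x,y,z) = -\frac{1}{2}x^2 + \frac{1}{2}(1+z^2)y^2$, every point $(0,0,z)$ is a strict saddle with Hessian eigenvalue $1+z^2$, so the values $2/h$ to be excluded fill $(0,2]$. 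Keeping that step would destroy the ``almost all $\alpha_0$'' conclusion, so drop it.

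\emph{Choice of $\varepsilon_k$.} Setting $\varepsilon_k = 4\alpha_k\omega(r)$ breaks down if $\nabla^2 f$ is constant near $x^\ast$. Then $\omega(r) = 0$ and $\varepsilon_k = 0$, which is neither positive nor non-summable. Tie $\varepsilon_k$ to the spectral gap instead, e.g.\ $\varepsilon_k = \alpha_k\eta/5 < (\mu_k - \lambda_k)/4$. Then choose $r$ so that the Lipschitz constant of $(g_k - x^\ast) - T_k$ on $B_r(0)$, in the max-norm, is at most $\varepsilon_k/4$; this mirrors the paper's $\varepsilon_k = c\alpha_k/5$. With these corrections your proof is complete.
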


We dedicate the rest of this section to a proof: first, in Section~\ref{sec:saddles-are-nph-gd}, we show that the strict saddles of $f$ correspond to NPH unstable fixed points of $(g_k)_{k \geq 0}$ (Definition~\ref{def:unstable-fixed-point}).
Then, in Section~\ref{sec:luzin-gd}, we make sure the iteration maps satisfy the Luzin $N^{-1}$ property (Definition~\ref{def:luzin}).
Both happen under certain conditions on the step-size schedule: we identify sufficient yet permissive ones.
Once both of these are secured, Theorem~\ref{thm:gd-vanishing-theorem} follows from Theorem~\ref{thm:global-avoidance-nonautonomous}.

\subsection{Strict saddles are NPH unstable} \label{sec:saddles-are-nph-gd}

We begin with a lemma establishing Lipschitz control of the iteration maps $g_k$ in a common neighborhood of the origin (later shifted to a strict saddle).
This is useful for claiming the uniform local Lipschitz control required by the definition of NPH unstable fixed points.

\begin{lemma} \label{lemma:local-lipschitz-control-GD}
    Let $f \colon \reals^d \to \reals$ be $C^2$ and
    consider \eqref{eq:GD} with positive step sizes $\alpha_0, \alpha_1, \ldots$.
    Then, for every $c > 0$, there exists $r > 0$ such that 
    \begin{align*}
        \Lip \Big( (g_k - \D g_k(0)) \vert_{B_r(0)} \Big) \leq c \alpha_k, 
        \qquad \textrm{ for all } k \geq 0.
    \end{align*}
    (This holds for all norms on $\reals^d$, only affecting the value of $r$.)
\end{lemma}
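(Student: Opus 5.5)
Observe first that $g_k - \D g_k(0)$ has a simple form. Since $\D g_k(x) = I - \alpha_k \nabla^2 f(x)$, we have $\D g_k(0) = I - \alpha_k \nabla^2 f(0)$. Therefore
\begin{align*}
    (g_k - \D g_k(0))(x) = -\alpha_k \big( \nabla f(x) - \nabla^2 f(0)\, x \big) \eqqcolon -\alpha_k\, \psi(x),
\end{align*}
where $\psi(x) \coloneqq \nabla f(x) - \nabla^2 f(0) x$ does not depend on $k$. By homogeneity of Lipschitz constants,
\begin{align*}
    \Lip\big((g_k - \D g_k(0))\vert_{B_r(0)}\big) = \alpha_k \Lip(\psi\vert_{B_r(0)}).
\end{align*}
So it suffices to find a single $r > 0$, independent of $k$, with $\Lip(\psi\vert_{B_r(0)}) \leq c$. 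This factorization is the whole point: the uniformity in $k$ comes for free because $\alpha_k$ scales out exactly.

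To bound $\Lip(\psi\vert_{B_r(0)})$, note that $\psi$ is $C^1$ with $\D\psi(x) = \nabla^2 f(x) - \nabla^2 f(0)$, which is continuous and vanishes at $0$. Fix any norm on $\reals^d$ and its induced operator norm. By continuity of $\nabla^2 f$ at $0$, there is $r > 0$ such that $\|\nabla^2 f(x) - \nabla^2 f(0)\| \leq c$ for all $x \in B_r(0)$. The ball $B_r(0)$ is convex, so for $x, y \in B_r(0)$ the segment between them stays in the ball. The mean value inequality, in integral form $\psi(x) - \psi(y) = \int_0^1 \D\psi(y + t(x-y))(x-y)\,dt$, then gives $\|\psi(x)-\psi(y)\| \leq c\|x-y\|$.

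Finally, the parenthetical remark holds because balls in any norm are convex and all norms on $\reals^d$ are equivalent. Continuity of $\nabla^2 f$ in the chosen operator norm yields some $r$ for the chosen norm, and only its value changes. There is no real obstacle here. The only points needing care are that the segment argument requires convexity of the ball, and that $r$ depends only on $f$ and $c$, never on $\alpha_k$.
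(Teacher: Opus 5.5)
Your proof is correct and follows essentially the same route as the paper: both note that $\D\big(g_k - \D g_k(0)\big)(x) = -\alpha_k\big(\nabla^2 f(x) - \nabla^2 f(0)\big)$. Hence $\alpha_k$ factors out, and a single $k$-independent radius comes from continuity of $\nabla^2 f$ at $0$ together with the mean value inequality on the (convex) ball. The paper phrases the continuity step through the modulus $\omega(r) = \max_{\|x\|\le r}\|\nabla^2 f(x) - \nabla^2 f(0)\|$ and the Maximum Theorem, whereas you use continuity at $0$ directly and make the segment argument explicit; this difference is purely cosmetic.
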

\begin{proof}
    Since $f$ is $C^2$, all $g_k$ are $C^1$ and we can compute 
    $\D g_k(x) = I_d - \alpha_k \nabla^2 f(x)$.
    Define $h_k(x) = g_k(x) - \D g_k(0)[x]$, which is also $C^1$ and satisfies 
    \begin{align*}
        \D h_k(x) = 
        \D g_k(x) - \D g_k(0) = 
        -\alpha_k (\nabla^2 f(x) - \nabla^2 f(0)).
    \end{align*}
    Then, we have that  
    \begin{align*}
        \Lip(h_k\vert_{B_r(0)}) 
        \leq \max_{\|x\| \leq r} \| \D h_k(x) \| 
        = \alpha_k \max_{\|x\| \leq r} \| \nabla^2 f(x) - \nabla^2 f(0) \| \eqqcolon \alpha_k \, \omega(r).
    \end{align*}
    The function $\omega \colon [0, \infty) \to \reals$ satisfies $\omega(0) = 0$ and it is continuous owing to the Maximum Theorem and continuity of $\nabla^2 f$~\citep[p.~116]{berge1963topological}.
    Therefore, given any $c > 0$, we can find $r > 0$ such that $\omega(r) \leq c$ (independent of $k$).
\end{proof}

Consider a strict saddle point $x^\ast$ of $f$.
We aim to show $x^\ast$ is NPH for \eqref{eq:GD}.
This notably requires us to split $\reals^d$ into a direct sum $\Ecs \oplus \Eu$ that (eventually) works for \emph{all} $g_k$.
If the eigenvalues of $\nabla^2 f(x^\ast)$ are $h_1, \ldots, h_d$, the eigenvalues of $\D g_k(x^\ast) = I_d - \alpha_k \nabla^2 f(x^\ast)$ are $1-\alpha_k h_1, \ldots, 1-\alpha_k h_d$.
From this, it is clear that negative eigenvalues ($h_i < 0$) always correspond to unstable directions ($1-\alpha_k h_i > 1$), for all $\alpha_k > 0$.
However, positive eigenvalues ($h_i > 0$) may be center-stable ($1-\alpha_k h_i \geq -1$) or unstable ($1-\alpha_k h_i < -1$), depending on how large $\alpha_k$ is.
For that reason, we need some condition on the step sizes that ensures the classification is eventually fixed.

The definition below formalizes that requirement, with an additional condition.
For $h_i < 0$, it is clear that $|1 - \alpha_k h_i| = 1 + \alpha_k |h_i|$ stays away from 1 proportionally to $\alpha_k$.
For $h_i > 0$, this is not automatic, hence we require it explicitly.
(This holds if $\alpha_k$ does not accumulate at $2/h_i$.)

\begin{definition} \label{def:admissiblesequence}
    A sequence of positive reals $\alpha_0, \alpha_1, \ldots$ is \emph{admissible} if, for each $h \in \Rd$, there exists $K \geq 0$ such that both of the following are true:
    \begin{enumerate}[label=(\roman*)]
        \item There exists a partition of the indices $\{1, \ldots, d\}$ as $\Ics \cup \Iu$ such that
        \begin{align*}
            |1 - \alpha_k h_i| \leq 1 && \textrm{ and } && |1 - \alpha_k h_j| > 1
        \end{align*}
        for all $k \geq K$, $i \in \Ics$ and $j \in \Iu$.
        \item There exists a constant $c > 0$ such that $|1 - \alpha_k h_j| \geq 1 + c \alpha_k$ for all $k \geq K$ and $j \in \Iu$.
    \end{enumerate}
\end{definition}

\begin{remark} \label{remark:admissible-steps}
    If the step sizes $\alpha_0, \alpha_1, \ldots$ are positive and either constant ($\alpha_k = \alpha_0$) or vanishing ($\alpha_k \to 0$), then the sequence $( \alpha_k )_{k \ge 0}$ is admissible.
    For the latter, the eventual partition associated to a vector $h \in \Rd$ is $\Ics = \{i : h_i \geq 0\}$ and $\Iu = \{j : h_j < 0\}$.
\end{remark}

The following lemma ensures that strict saddle points of a $C^2$ function are NPH unstable fixed points for gradient descent with admissible step sizes.

\begin{lemma} [Strict saddles are NPH unstable] \label{lemma:csmt-sequence-of-gradient-maps}
    Let $f \colon \reals^d \to \reals$ be $C^2$.
    If $( \alpha_k )_{k \geq 0}$ is admissible (Definition~\ref{def:admissiblesequence}) and $\sum_{k \geq 0} \alpha_k = \infty$,
    then every strict saddle point of $f$ is an NPH unstable fixed point for \eqref{eq:GD} (Definition~\ref{def:unstable-fixed-point}).
\end{lemma}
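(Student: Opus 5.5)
Fix a strict saddle $x^\ast$, translate it to the origin, and diagonalize $\nabla^2 f(x^\ast)$ in an orthonormal eigenbasis with eigenvalues $h = (h_1,\dots,h_d)$. Apply admissibility (Definition~\ref{def:admissiblesequence}) to this $h$. It yields $K$, a partition $\Ics \cup \Iu$, and $c>0$. Set $\Ecs = \spann\{e_i : i\in\Ics\}$ and $\Eu = \spann\{e_j : j \in \Iu\}$, both equipped with the Euclidean norm, and take $T_k = \D g_k(x^\ast) = I_d - \alpha_k \nabla^2 f(x^\ast)$. Since $x^\ast$ is a strict saddle, some $h_j<0$, hence $|1-\alpha_k h_j|>1$ for every $k$. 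Therefore $j\in\Iu$, because it cannot lie in $\Ics$; this gives $\dim \Eu\ge 1$.

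Conditions \aref{assumption:fixed-origin} and \aref{assumption:invariance} are immediate: $\nabla f(x^\ast)=0$, and $T_k$ is diagonal in the eigenbasis, so it preserves both spans. For \aref{assumption:pseudo-hyperbolic}, put $\lambda_k = 1$ and $\mu_k = 1 + c\alpha_k$. On $\Ecs$ the map $T_k$ acts diagonally with entries of modulus at most $1$, and on $\Eu$ with entries of modulus at least $1+c\alpha_k$, for $k\ge K$. Because the basis is orthonormal, this gives $\|T_k y\|\le \|y\|$ and $\|T_k z\|\ge (1+c\alpha_k)\|z\|$.

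For the small-Lipschitz condition, choose $\varepsilon_k = c\alpha_k/8$. This satisfies $\varepsilon_k < (\mu_k-\lambda_k)/4 = c\alpha_k/4$. Apply Lemma~\ref{lemma:local-lipschitz-control-GD} with constant $c/32$, using the max-norm~\eqref{eq:Rd-norm} on $\reals^d$. It gives $r>0$ such that $\Lip((g_k - x^\ast - T_k)\vert_{B_r(0)}) \le c\alpha_k/32 = \varepsilon_k/4$ for all $k$. This also secures the requirement $\varepsilon_k/4 < (\mu_k-\lambda_k)/4$ inside PH.

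The only step needing care is non-summability. We have $\varepsilon_k/(\mu_k - 2\varepsilon_k) = (c\alpha_k/8)/(1 + 3c\alpha_k/4)$, and we must show its sum diverges. If $\alpha_k$ is bounded by some $A$ for $k \ge K$, each term is at least $(c\alpha_k/8)/(1+3cA/4)$, and divergence follows from $\sum\alpha_k=\infty$. In general $\alpha_k$ might be unbounded. In that case, note that $t\mapsto (ct/8)/(1+3ct/4)$ is increasing, so the terms with $\alpha_k\ge 1$ are bounded below by the positive constant $(c/8)/(1+3c/4)$. If infinitely many such $k$ exist, the series diverges. Otherwise $\alpha_k<1$ eventually, and the bounded case applies. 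This completes the verification of Definition~\ref{def:unstable-fixed-point}.
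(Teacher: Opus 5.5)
Your proposal is correct and follows essentially the same route as the paper: the same splitting obtained from admissibility, $T_k = I_d - \alpha_k \nabla^2 f(x^\ast)$, $\lambda_k = 1$, $\mu_k = 1 + c\alpha_k$, $\varepsilon_k$ proportional to $c\alpha_k$ (you take $c\alpha_k/8$, the paper takes $c\alpha_k/5$), and Lemma~\ref{lemma:local-lipschitz-control-GD} for the small-Lipschitz condition. Your case analysis for non-summability reproves Lemma~\ref{lemma:summability} inline, and your explicit check that $\dim(\Eu) \geq 1$ (a negative eigenvalue $h_j$ forces $j \in \Iu$) supplies a detail the paper leaves implicit.
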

\begin{proof}
    Let $x^\ast$ be a strict saddle point of $f$.
    Without loss of generality, we can assume $x^\ast = 0$.
    Let $h_1, \ldots, h_d$ denote the eigenvalues of $\nabla^2 f(0)$.
    Since $\nabla f(0) = 0$ we have $g_k(0) = 0$ for all $k \geq 0$, so~\aref{assumption:fixed-origin} is satisfied for all $g_k$.
    Below, we let $T_k \coloneqq \D g_k(0) = I_d - \alpha_k \nabla^2 f(0)$ and check all other conditions of Definition~\ref{def:unstable-fixed-point}.

    \paragraph{Linearization and invariant splitting.}

    Since $(\alpha_k)_{k \geq 0}$ is admissible, there exist $K \geq 0$, $c > 0$ and a partition $\{1, \ldots, d\} = \Ics \cup \Iu$ such that, for all $k \geq K$,
    \begin{align*}
        |1 - \alpha_k h_i| \leq 1 \textrm{ for all } i \in \Ics && \textrm{ and } && |1 - \alpha_k h_j| \geq 1 + c \alpha_k \textrm{ for all } j \in \Iu.
    \end{align*}
    Let $v_1, \ldots, v_d$ be a basis of eigenvectors of $\nabla^2 f(0)$ associated to $h_1, \ldots, h_d$.
    These are also eigenvectors for all $T_k$ hence we may define the center-stable and unstable spaces as
    \begin{align*}
        \Ecs = \spann \{ v_i : i \in \Ics \}
        && \textrm{ and } && 
        \Eu = \spann \{ v_j : j \in \Iu \}.
    \end{align*}
    By construction, it holds that $T_k(\Ecs) \subseteq \Ecs$ and $T_k(\Eu) \subseteq \Eu$ for all $k$, so the invariance assumption~\aref{assumption:invariance} is satisfied with the same splitting for each pair $(g_k, T_k)$.
    
    \paragraph{Pseudo-hyperbolicity of the linear part.}
    Recall that the notion of NPH depends on the choice of norms on $\Ecs$ and $\Eu$.
    Here, we equip both $\Ecs$ and $\Eu$ with the (Euclidean) 2-norm, and use the max-norm from Eq.~\eqref{eq:Rd-norm} on $\reals^d$.
    
    Let us find constants $1 \leq \lambda_k < \mu_k$ such that $\| T_k y \| \leq \lambda_k \| y \|$ and $\| T_k z \| \geq \mu_k \| z \|$ for all $y \in \Ecs, z \in \Eu$.
    For all $y \in \Ecs$, we have
    \begin{align*}
        \| T_k y \| \leq 
        \Bigg( \sup_{\substack{x \in \Ecs \\ x \neq 0}} \frac{\| T_k x \|}{\| x \|} \Bigg) \; \| y \| = 
        \Bigg( \max_{i \in \Ics} \left| 1 - \alpha_k h_i \right|  \Bigg) \; \| y \| \leq \| y \|,
    \end{align*}
    hence we can choose $\lambda_k = 1$ for all $k \geq K$.
    (If $\Ics$ is empty, this remains a valid choice.)

    Similarly, for any $z \in \Eu$, we have that
    \begin{align*}
        \| T_k z \| \geq 
        \Bigg( \inf_{\substack{x \in \Eu \\ x \neq 0}} \frac{\| T_k x \|}{\| x \|} \Bigg) \; \| z \| = 
        \Bigg( \min_{j \in \Iu} \left| 1 - \alpha_k h_j \right| \Bigg) \; \| z \| \geq (1 + c \alpha_k) \| z \|,
    \end{align*}
    hence we can choose $\mu_k = 1 + c \alpha_k$ for all $k \geq K$.
    Observe $\mu_k > 1 = \lambda_k$, confirming~\aref{assumption:pseudo-hyperbolic}.

    \paragraph{Choosing $\varepsilon_k$ and concluding joint pseudo-hyperbolicity.}

    Set $\varepsilon_k = (c \alpha_k)/5$, so that (in particular) $\varepsilon_k = (\mu_k - \lambda_k)/5 < (\mu_k - \lambda_k)/4$.
    From Lemma~\ref{lemma:local-lipschitz-control-GD} we also get $r > 0$ such that
    \begin{align*}
        \Lip\Big( (g_k - T_k)\vert_{B_r(0)} \Big) \leq \frac{\varepsilon_k}{4}
        \quad\quad \textrm{ for all } k \geq K.
    \end{align*}
    This confirms \aref{assumption:small-lipschitz}.
    Combined with the above, it follows that
    \begin{align*}
        (g_k, T_k) \in \mathrm{PH}\Big(\mu_k, \lambda_k, \frac{\varepsilon_k}{4}; B_r(0), \Ecs \oplus \Eu \Big)
        \quad\quad \textrm{ for all } k \geq K.
    \end{align*}

    \paragraph{Verifying the non-summability condition.}

    The numbers $\frac{\varepsilon_k}{\mu_k - 2\varepsilon_k} = \frac{\frac{1}{5} c\alpha_k}{1 + \frac{3}{5} c\alpha_k}$ are non-summable by Lemma~\ref{lemma:summability}, owing to the assumption that the $\alpha_k$ themselves are non-summable. 

    All of the above, combined, confirms that $x^\ast$ (which was an arbitrary strict saddle point of $f$) is an NPH unstable fixed point.
\end{proof}

From Lemma~\ref{lemma:csmt-sequence-of-gradient-maps} and the global avoidance result in Theorem~\ref{thm:global-avoidance-nonautonomous}, we readily obtain the following saddle avoidance result:

\begin{proposition} \label{proposition:GD-admissible-corollary}
    Let $f \colon \reals^d \to \reals$ be $C^2$ and let $( \alpha_k )_{k \geq 0}$ be an admissible sequence of step sizes (Definition~\ref{def:admissiblesequence}) with $\sum_{k = 0}^\infty \alpha_k = \infty$.
    Assume $g_k(x) = x - \alpha_k \nabla f(x)$ satisfies the Luzin $N^{-1}$ property for all $k \geq 0$.
    Then, \eqref{eq:GD} avoids the strict saddle points of $f$.
\end{proposition}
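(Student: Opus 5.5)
This proposition follows by chaining two earlier results. I plan to invoke Lemma~\ref{lemma:csmt-sequence-of-gradient-maps} first and then Theorem~\ref{thm:global-avoidance-nonautonomous}.

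\textbf{Step one.} Since $f$ is $C^2$ and $(\alpha_k)_{k \geq 0}$ is admissible with $\sum_k \alpha_k = \infty$, Lemma~\ref{lemma:csmt-sequence-of-gradient-maps} applies directly. It states that every strict saddle point of $f$ is an NPH unstable fixed point of the system $(g_k)_{k\geq 0}$ with $g_k(x) = x - \alpha_k \nabla f(x)$. So if $S_f$ denotes the set of strict saddles of $f$ and $S$ the set of NPH unstable fixed points of the system, we have $S_f \subseteq S$.

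\textbf{Step two.} Each $g_k$ has the Luzin $N^{-1}$ property by hypothesis. Theorem~\ref{thm:global-avoidance-nonautonomous} therefore gives that the system avoids $S$, meaning $W(S) = \bigcup_{x^\ast \in S} W(x^\ast)$ has Lebesgue measure zero (Definition~\ref{def:stable-set}).

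\textbf{Conclusion.} The stable set of $S_f$ is monotone in its index set, so $W(S_f) \subseteq W(S)$. A subset of a measure-zero set is Lebesgue-null, since Lebesgue measure is complete. Hence $W(S_f)$ has measure zero, which is exactly the statement that \eqref{eq:GD} avoids the strict saddle points of $f$.

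There is no real obstacle here, since all the difficulty was absorbed into the lemma and the theorem. The only point to watch is that Lemma~\ref{lemma:csmt-sequence-of-gradient-maps} may produce a different splitting, index $K$, radius and constants for each saddle. This is harmless, because Definition~\ref{def:unstable-fixed-point} is pointwise in $x^\ast$, and the proof of Theorem~\ref{thm:global-avoidance-nonautonomous} already handles per-point data through the Lindel\"of countable-cover argument.
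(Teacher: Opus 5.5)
Your proposal is correct and matches the paper's argument: the paper obtains this proposition immediately by combining Lemma~\ref{lemma:csmt-sequence-of-gradient-maps} (strict saddles are NPH unstable fixed points) with Theorem~\ref{thm:global-avoidance-nonautonomous} (avoidance under the Luzin $N^{-1}$ hypothesis). Your remark that $W(S_f) \subseteq W(S)$ only spells out the monotonicity step that the paper leaves implicit.
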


Reconsider Theorem~\ref{thm:gd-vanishing-theorem}.
From Remark~\ref{remark:admissible-steps}, we see that all three step-size schedules are admissible.
They also satisfy $\sum_k \alpha_k = \infty$ owing to $\alpha_0 > 0$ and $\gamma \in (0, 1]$.
(For cosine decay, this is because the cosine term is periodic and never equal to $-1$.)
%
%
%
Thus, to establish the theorem, it remains to secure the Luzin $N^{-1}$ property: we do so next.

\subsection{The Luzin $N^{-1}$ property holds generically} \label{sec:luzin-gd}

To apply Proposition~\ref{proposition:GD-admissible-corollary}, we need each iteration map $g_k$ of \eqref{eq:GD} to satisfy the Luzin $N^{-1}$ property (Definition~\ref{def:luzin}).
This requirement is mild: the set of step sizes $\alpha > 0$ such that the map $g_\alpha(x) := x - \alpha \nabla f(x)$ lacks the Luzin $N^{-1}$ property has Lebesgue measure zero, see \citep[Lem.~3.7]{musat2025saddlelinesearch}.
Consequently, it is reasonable to expect that many step-size schedules $(\alpha_k)$ yield sequences of maps $g_k$ for \eqref{eq:GD} which \emph{all} have the Luzin $N^{-1}$ property.

We formalize one such setting.
When each step size $\alpha_k$ depends regularly on an initial parameter $\alpha_0$, we show that all corresponding iteration maps satisfy the Luzin $N^{-1}$ property for almost all $\alpha_0 > 0$.
(See also Remark~\ref{remark:gd-restricted-step-always-luzin} for an alternative based on a Lipschitz gradient assumption.)

\begin{lemma} \label{lemma:all-maps-have-luzin}
    Let $f \colon \reals^d \to \reals$ be $C^2$.
    For some $\alpha_0 > 0$, generate $(\alpha_k)_{k \geq 0}$ as $\alpha_k = h_k(\alpha_0)$, where each $h_k \colon \positive \to \positive$ 
    is $C^1$ and satisfies $h_k'(\alpha) \neq 0$ for almost all $\alpha > 0$.
    Then, for almost all $\alpha_0 > 0$, every iteration map $g_k$ of \eqref{eq:GD} satisfies the Luzin $N^{-1}$ property.  
\end{lemma}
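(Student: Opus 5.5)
The plan is to combine the cited fact \citep[Lem.~3.7]{musat2025saddlelinesearch} with a change-of-variables argument through each $h_k$, and then take a countable union over $k$. Let $\mathcal{B} \subseteq \positive$ denote the set of step sizes $\alpha > 0$ for which $g_\alpha(x) = x - \alpha \nabla f(x)$ fails the Luzin $N^{-1}$ property. By the cited lemma, $\mathcal{B}$ has Lebesgue measure zero. (If one prefers a self-contained route: by Lemma~\ref{lemma:luzin-verify}, $\alpha \in \mathcal{B}$ iff the set $\{x : \det(I_d - \alpha \nabla^2 f(x)) = 0\}$ has positive measure. That determinant vanishes iff $1/\alpha$ is an eigenvalue of $\nabla^2 f(x)$. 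A Fubini argument on $\{(x,\alpha)\}$, using that each $x$ contributes at most $d$ bad values of $\alpha$, then shows $\mathcal{B}$ is null.)

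For each $k$, the bad set of initial parameters is $h_k^{-1}(\mathcal{B}) = \{\alpha_0 > 0 : g_k \text{ fails Luzin } N^{-1}\}$. If each of these is null, the set of $\alpha_0$ for which some $g_k$ fails is the countable union $\bigcup_{k \geq 0} h_k^{-1}(\mathcal{B})$, which is null, and the lemma follows.

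The main step is showing $h_k^{-1}(\mathcal{B})$ is null, i.e.\ that the one-dimensional $C^1$ map $h_k$ has the Luzin $N^{-1}$ property. Since $h_k'(\alpha) \neq 0$ for almost all $\alpha$, this is exactly Lemma~\ref{lemma:luzin-verify} in dimension $d=1$. That lemma is stated on $\reals^d$ whereas $h_k$ is defined on $\positive$, so I would either note that the argument is local, or prove the claim directly:
\begin{itemize}
\item The open set $U_k = \{\alpha : h_k'(\alpha) \neq 0\}$ has full measure in $\positive$.
\item $U_k$ is a countable union of open intervals on each of which $h_k$ is a $C^1$ diffeomorphism onto its image.
\item On each such interval, the inverse is $C^1$, hence locally Lipschitz, so it maps null sets to null sets. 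Thus the preimage of $\mathcal{B}$ within each interval is null.
\item Adding the null complement $\positive \setminus U_k$ keeps the total null.
\end{itemize}

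The only delicate point is this interval decomposition and the local Lipschitz property of the inverse. It is routine, and it is the step I would write out most carefully.
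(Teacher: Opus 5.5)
Your proposal is correct and follows the paper's proof: the set of bad step sizes is null by \citep[Lem.~3.7]{musat2025saddlelinesearch}, each $h_k$ has the Luzin $N^{-1}$ property via Lemma~\ref{lemma:luzin-verify} in dimension one, and a countable union over $k$ finishes the argument. Two of your additions are correct refinements of points the paper leaves implicit: the interval decomposition of $\{h_k' \neq 0\}$, which handles the fact that $h_k$ is defined on $\positive$ rather than on all of $\reals$, and the Fubini sketch showing the bad step-size set is null.
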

\begin{proof}
    The set $A \coloneqq \{ \alpha > 0 \mid x \mapsto x - \alpha \nabla f(x) \textrm{ does not satisfy Luzin } N^{-1} \} $ has measure zero \citep[Lem.~3.7]{musat2025saddlelinesearch}.
    Since we assumed that $h_k' \neq 0$ almost everywhere, Lemma~\ref{lemma:luzin-verify} implies that $h_k$ itself satisfies the Luzin $N^{-1}$ property and so the set 
    \begin{align*}
        h_k^{-1}(A) = \{ \alpha_0 > 0 \mid g_k \textrm{ does not satisfy Luzin } N^{-1} \}
    \end{align*} 
    also has measure zero.
    Therefore, the set of all bad initial step sizes $\alpha_0$ is
    \begin{align*}
        \left\{ \alpha_0 \mid \exists k \geq 0 \textrm{ such that } g_k \textrm{ does not satisfy Luzin } N^{-1} \right\} = 
        \bigcup_{k \geq 0} h_k^{-1}(A).
    \end{align*}
    This has measure zero as a countable union of measure zero sets, as claimed.
\end{proof}

Theorem~\ref{thm:gd-vanishing-theorem} now follows from Proposition~\ref{proposition:GD-admissible-corollary} (and its subsequent comment) and from Lemma~\ref{lemma:all-maps-have-luzin} upon checking that it (indeed) applies to all three step-size schedules: they have $\alpha_k = h_k(\alpha_0)$ with $h_k'(\alpha) > 0$ for all $k \geq 0$ and all $\alpha > 0$.
(For cosine decay, this uses again that the cosine term is never equal to $-1$.)

\section{Extension to systems and algorithms on manifolds} \label{sec:extension-manifolds}

Riemannian gradient descent (RGD) for a cost function $f \colon \calM \to \reals$ on a Riemannian manifold $\calM$ corresponds to the non-autonomous dynamical system
\begin{align}
    g_k \colon \calM \to \calM, && g_k(x) = \Retr_x(-\alpha_k \grad f(x)),
    \tag{RGD}
    \label{eq:RGD}
\end{align}
where $\grad f$ is the (Riemannian) gradient of $f$ and $\Retr \colon \T\calM \to \calM$ is a \emph{retraction}, that is, a smooth map defined on the tangent bundle $\T\calM = \{ (x, v) : x \in \calM \textrm{ and } v \in \T_x\calM \}$ such that for all $(x, v)$ the curve $c(t) := \Retr_x(tv)$ satisfies $c(0) = x$ and $c'(0) = v$~\citep{AMS08}.
(This generalizes GD, where $\calM = \Rd$, $\grad f = \nabla f$ and $\Retr_x(v) = x+v$.)

To study saddle avoidance of RGD with vanishing step sizes, we first discuss how to extend Theorem~\ref{thm:global-avoidance-nonautonomous} to manifolds (Section~\ref{subsec:riemannian-reduction}), then we study RGD specifically, in the same way that we studied GD (Section~\ref{subsec:RGD}).

\subsection{Reduction to the Euclidean case} \label{subsec:riemannian-reduction}

As a first step, we must extend the notion of an NPH unstable fixed point $x^\ast$ to smooth manifolds $\calM$.
In the Euclidean setting, the small-Lipschitz assumption~\aref{assumption:small-lipschitz} is stated as
\begin{align*}
    \Lip((g-T)\vert_{U}) \leq \varepsilon
\end{align*}
for a map $g \colon \Rd \to \Rd$ and a linear map $T \colon \Rd \to \Rd$.
On a manifold $\calM$, however, $g$ maps $\calM$ to $\calM$ whereas $T$ is a linear map on the tangent space $\T_{x^\ast} \calM$.
Thus, the expression $g-T$ is not well defined.

One way to deal with this is to lift the iteration maps to the tangent space at the fixed point $x^\ast$.
In general, there is no diffeomorphism from $\calM$ to $\T_{x^\ast}\calM$ as a whole, but it is enough to have this locally.
Specifically, choose any pair of smooth maps
\begin{align}
    \psi \colon \T_{x^\ast}\calM \to \calM && \textrm{ and } && \hat\psi \colon \calM \to \T_{x^\ast}\calM
    \label{eq:psihatpsi}
\end{align}
together with a neighborhood $U$ of $x^\ast$ on $\calM$ and a neighborhood $V$ of the origin in $\T_{x^\ast}\calM$ such that (a) $\psi(0) = x^\ast$, and (b) the restrictions $\psi|_V \colon V \to U$ and $\hat\psi|_U \colon U \to V$ are each other's inverse.
(See Remark~\ref{rem:choiceofpsihatpsi} below for an example.)

Then, if $x^\ast$ is a fixed point for all iteration maps in the system $(g_k)_{k \geq 0}$ on $\calM$, we define the lifted maps
\begin{align} \label{eq:iteration-maps-lifts}
    \tilde{g}_k \coloneqq \hat\psi \circ g_k \circ \psi \colon \T_{x^\ast}\calM \to \T_{x^\ast}\calM.
\end{align}
The new sequence $(\tilde g_k)_{k \geq 0}$ defines a system on the vector space $\T_{x^\ast}\calM$, so that we can define NPH instability of $x^\ast$ by relying on Definition~\ref{def:unstable-fixed-point}, as follows.

\begin{definition} \label{def:NPHmanifolds}
    We call $x^\ast$ an \emph{NPH unstable fixed point} of $( g_k \colon \calM \to \calM )_{k \ge 0 }$ if there exist $\psi, \hat\psi$ as above such that $0 \in \T_{x^\ast}\calM$ is an NPH unstable fixed point of $(\tilde{g}_k \coloneqq \hat\psi \circ g_k \circ \psi)_{k \geq 0}$ in the sense of Definition~\ref{def:unstable-fixed-point}.
\end{definition}

We can now extend Theorem~\ref{thm:global-avoidance-nonautonomous} to manifolds.

\begin{theorem} \label{thm:global-avoidance-nonautonomous-riemannian}
    Let $\calM$ be a smooth manifold.
    If each map $g_k$ of the non-autonomous dynamical system $\big( g_k \colon \calM \to \calM \big)_{k \geq 0}$ has the Luzin $N^{-1}$ property, then the system avoids its set of NPH unstable fixed points (Definition~\ref{def:NPHmanifolds}).
\end{theorem}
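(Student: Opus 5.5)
The plan mirrors the proof of Theorem~\ref{thm:global-avoidance-nonautonomous}, transported through the charts $\psi,\hat\psi$. We first fix what measure zero means on $\calM$: a set $E \subseteq \calM$ has measure zero if $\phi(E \cap \mathrm{dom}\,\phi)$ has Lebesgue measure zero for every smooth chart $\phi$. This is well defined because $C^1$ transition maps preserve null sets. The Luzin $N^{-1}$ property for $g_k \colon \calM \to \calM$ is then understood with respect to these null sets.

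\textbf{Local step.} Let $x^\ast$ be an NPH unstable fixed point, with $\psi,\hat\psi,U,V$ as in Definition~\ref{def:NPHmanifolds}. Identifying $\T_{x^\ast}\calM \cong \reals^n$ by a linear isomorphism, Theorem~\ref{theorem:csmt-general} applies to $(\tilde g_k)$ at $0$. It yields a neighborhood $\tilde B$ of $0$, a null set $\Wcsloctilde$ and an index $K$. We shrink $\tilde B$ to $\tilde B \cap V$ and set $B(x^\ast) \coloneqq \psi(\tilde B) \subseteq U$, which is open in $\calM$ since $\psi|_V$ is a diffeomorphism onto the open set $U$. We also set $\Wcsloc(x^\ast) \coloneqq \psi(\Wcsloctilde \cap V)$, which is null because $\hat\psi|_U$ serves as a chart up to a linear map and $\psi|_V$ is smooth.

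Now take a trajectory with $x_k \in B(x^\ast)$ for all $k \ge \bar k \ge K$, and set $\tilde x_k \coloneqq \hat\psi(x_k) \in \tilde B$. Since $x_k \in U$, we have $\psi(\tilde x_k) = x_k$, and therefore $\tilde g_k(\tilde x_k) = \hat\psi(g_k(x_k)) = \hat\psi(x_{k+1}) = \tilde x_{k+1}$. So $(\tilde x_k)$ is a trajectory of $(\tilde g_k)$ that stays in $\tilde B$. Theorem~\ref{theorem:csmt-general} then gives $\tilde x_k \in \Wcsloctilde$, and hence $x_k = \psi(\tilde x_k) \in \Wcsloc(x^\ast)$. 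The main subtlety sits here: $\tilde g_k$ agrees with the true dynamics only on $V$, so the lifted trajectory has to be checked to be a genuine $\tilde g_k$-trajectory. Choosing $\tilde B \subseteq V$ is exactly what guarantees this.

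\textbf{Globalization.} We repeat the Lindel\"of argument. Smooth manifolds are second countable, so the cover $\{B(x^\ast)\}$ of the set $S$ of NPH unstable fixed points has a countable subcover $\{B(x_i^\ast)\}$. Any $x_0$ whose trajectory converges to some $x^\ast \in S$ eventually stays in some $B(x_i^\ast)$. It therefore lies in
\[
\bigcup_i \bigcup_{k\ge K_i} \Phi_k^{-1}(\Wcsloc(x_i^\ast)), \qquad \Phi_k = g_{k-1}\circ\cdots\circ g_0 .
\]
Compositions of maps with the Luzin $N^{-1}$ property retain it, so each preimage is null, and a countable union of null sets is null. This concludes the argument.
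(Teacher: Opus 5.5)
Your proposal is correct and follows essentially the same route as the paper. You lift via $\hat\psi$ on $U$, apply Theorem~\ref{theorem:csmt-general} to $(\tilde g_k)$ with $\tilde B$ shrunk into $V$, push $\tilde B$ and $\Wcsloctilde$ forward by the diffeomorphism $\psi|_V$ (checking along the way that the lifted sequence is a genuine $\tilde g_k$-trajectory), and then globalize with the Lindel\"of and Luzin $N^{-1}$ argument of Theorem~\ref{thm:global-avoidance-nonautonomous}, using a separate index $K_i$ for each $x_i^\ast$.
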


Recall that the notions of ``avoidance'' (Definition~\ref{def:stable-set}) and ``Luzin $N^{-1}$'' (Definition~\ref{def:luzin}) require a concept of \emph{measure zero} set.
On manifolds, we use the standard chart-based notion~\citep[Ch.~6]{lee2012smoothmanifolds}:
a set $A \subset \calM$ has measure zero in $\calM$ if, for every smooth chart $(U,\varphi)$ of $\calM$, the coordinate image $\varphi(A \cap U) \subset \reals^d$ has (Lebesgue) measure zero.

\begin{proof}[Proof of Theorem~\ref{thm:global-avoidance-nonautonomous-riemannian}.]
    Let $x^\ast$ be an NPH unstable fixed point of $(g_k)_{k \geq 0}$.
    Summon $\psi, \hat\psi$ and $U, V$ as per Definition~\ref{def:NPHmanifolds}.
    Apply Theorem~\ref{theorem:csmt-general} to $(\tilde{g}_k)_{k \geq 0}$: this provides an integer $K \geq 0$, a neighborhood $\widetilde{B}$ of the origin in $\T_{x^\ast}\calM$ and a set $\Wcsloctilde$ of measure zero in $\T_{x^\ast} \calM$ with the following property:
    if $(v_k)_{k\geq 0}$ is a sequence such that, for some $\bar{k} \geq K$, we have $v_{k+1} = \tilde{g}_k(v_k)$ and $v_k \in \widetilde{B}$ for all $k \geq \bar{k}$, then $v_k$ is in $\Wcsloctilde$ for all $k \geq \bar{k}$.
    If need be, restrict both sets to $V \subseteq \T_{x^\ast}\calM$: the property remains valid.
    
    Push these sets to $\calM$ as $B(x^\ast) \coloneqq \psi(\widetilde{B})$ and $\Wcsloc(x^\ast) \coloneq \psi(\Wcsloctilde)$.
    Since $\psi|_V \colon V \to U$ is a diffeomorphism, we see that $B(x^\ast)$ is a neighborhood of $x^\ast$ and $\Wcsloc(x^\ast)$ has measure zero.

    Now consider a trajectory $(x_k)_{k \geq 0}$ with $x_{k+1} = g_k(x_k)$.
    Assume it eventually enters and never leaves $B(x^\ast)$, that is, there exists $\bar{k} \geq K$ such that $x_k \in B(x^\ast)$ for all $k \geq \bar{k}$.
    Since $B(x^\ast)$ is in $U$, we can lift those iterates to $\T_{x^\ast}\calM$ as $v_k \coloneqq \hat\psi(x_k)$ in such a way that $\psi(v_k) = x_k$.
    Thus, by design,
    \begin{align}
        v_{k+1} = \hat\psi(x_{k+1}) = \hat\psi(g_k(x_k)) = \hat\psi(g_k(\psi(v_k))) = \tilde g_k(v_k), && \forall k \geq \bar{k}.
        \label{eq:conjugatedynamicspsi}
    \end{align}
    In other words: $(v_k)_{k \geq \bar{k}}$ is (the tail-end of) a trajectory of the lifted system, and it is contained in $\tilde B$.
    Therefore, it is also contained in $\Wcsloctilde$, and (returning to the manifold) we find that $x_k$ is in $\Wcsloc(x^\ast)$ for all $k \geq \bar{k}$.
    
    From here, the Luzin $N^{-1}$ property allows to conclude as in the proof of Theorem~\ref{thm:global-avoidance-nonautonomous}.
\end{proof}

\begin{remark}[Choices of $\psi, \hat\psi$] \label{rem:choiceofpsihatpsi}
    There exists many possible choices of maps for~\eqref{eq:psihatpsi}.
    For example, on a complete Riemannian manifold, let $\psi \coloneqq \Exp_{x^\ast}$ be the (Riemannian) exponential map at $x^\ast$ (smooth, and defined globally since $\calM$ is complete) \citep[Prop.~5.19]{lee2018riemannian}.
    This has a smooth inverse $\Log_{x^\ast}$ when restricted to open balls of radius given by the injectivity radius $\inj(x^\ast) > 0$.
    A smooth transition function can be used to define globally a smooth $\hat\psi$ that matches $\Log_{x^\ast}$ on a ball of radius $\frac{1}{2}\inj(x^\ast)$ and then smoothly drops to $0 \in \T_{x^\ast}\calM$.
    This construction has the added benefit that $\D\psi(0)$ is identity on $\T_{x^\ast}\calM$, and likewise for $\D\hat\psi(x^\ast)$.
\end{remark}

\subsection{Riemannian Gradient Descent with vanishing step sizes} \label{subsec:RGD}

We show the following generalization of Theorem~\ref{thm:gd-vanishing-theorem}.
The restriction to real-analytic manifolds and retractions is mild: this holds for most applications, and the cost function $f$ only needs to be $C^2$.
This provision is used to secure the Luzin $N^{-1}$ property (see below).

\begin{theorem} \label{thm:RGDmain}
    Let $\calM$ be a real-analytic complete Riemannian manifold and let $f \colon \calM \to \reals$ be $C^2$.
    Assume that the retraction $\Retr \colon \T\calM \to \calM$ is such that for each $x \in \calM$, the map $v \mapsto \Retr_x(v)$ is real analytic.
    Consider \eqref{eq:RGD} with given $\alpha_0 > 0$ and one of the step-size schedules from Theorem~\ref{thm:gd-vanishing-theorem}.
    Then, for each fixed choice of the auxiliary parameters ($\gamma, T$) and for almost all $\alpha_0 > 0$,~\eqref{eq:RGD} avoids the strict saddle points of $f$.
\end{theorem}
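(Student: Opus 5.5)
The plan is to reduce to Theorem~\ref{thm:global-avoidance-nonautonomous-riemannian}, mirroring the proof of Theorem~\ref{thm:gd-vanishing-theorem}. Two things must be verified. First, every strict saddle of $f$ is an NPH unstable fixed point of \eqref{eq:RGD} in the sense of Definition~\ref{def:NPHmanifolds}. Second, for almost all $\alpha_0>0$, every map $g_k$ has the Luzin $N^{-1}$ property.

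\textbf{Strict saddles are NPH.} Fix a strict saddle $x^\ast$; it is fixed by all $g_k$ since $\grad f(x^\ast)=0$ and $\Retr_{x^\ast}(0)=x^\ast$. Take $\psi=\Exp_{x^\ast}$ and $\hat\psi$ as in Remark~\ref{rem:choiceofpsihatpsi}, and set $\tilde g_k=\hat\psi\circ g_k\circ\psi$. The key observation is that $\tilde g_k(v)=G(\alpha_k,v)$, where
\begin{align*}
    G(t,v)=\hat\psi\big(\Retr_{\psi(v)}(-t\,\grad f(\psi(v)))\big)
\end{align*}
is $C^1$ jointly in $(t,v)$, because $f$ is $C^2$ so $\grad f$ is $C^1$. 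It satisfies $G(0,v)=v$ near $0$ and $G(t,0)=0$. At $x^\ast$, with $\D\psi(0)=\D\hat\psi(x^\ast)=I$ and $\D\Retr_{x}(0)=I$, we get $T_k:=\D_v G(\alpha_k,0)=I-\alpha_k \Hess f(x^\ast)$. The differential of $v\mapsto \grad f(\psi(v))$ at $0$ is the Riemannian Hessian because $\grad f(x^\ast)=0$. This $T_k$ is self-adjoint in the inner product on $\T_{x^\ast}\calM$.

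We then take the same splitting, $\lambda_k=1$, $\mu_k=1+c\alpha_k$ and $\varepsilon_k=c\alpha_k/5$ as in Lemma~\ref{lemma:csmt-sequence-of-gradient-maps}; admissibility holds by Remark~\ref{remark:admissible-steps}, and non-summability follows from $\sum\alpha_k=\infty$. What remains is an analogue of Lemma~\ref{lemma:local-lipschitz-control-GD}: for every $c'>0$ there is $r>0$ with $\Lip((\tilde g_k-T_k)|_{B_r(0)})\le c'\alpha_k$ for all $k$ (equivalently, for all $k\ge K$).

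\textbf{Main obstacle: the factor $\alpha_k$ in the Lipschitz bound.} I will write
\begin{align*}
    \D_vG(t,v)-\D_vG(t,0)=\int_0^t\big(\partial_s\D_vG(s,v)-\partial_s\D_vG(s,0)\big)\,ds ,
\end{align*}
valid because $\D_vG(0,\cdot)=I$. This requires $\partial_s\D_v G$ to exist and be continuous, which needs slightly more than $C^1$. To avoid that, I will instead use the structure $G(t,v)=\hat\psi(\Retr_{\psi(v)}(-t\,w(v)))$ with $w(v)=\grad f(\psi(v))$, which is $C^1$ with $w(0)=0$. Note $\Retr$ is smooth on $\T\calM$, so only $w$ is merely $C^1$. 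Expanding, $\D_vG(t,v)$ is a sum of terms of two kinds. Terms smooth in $(t,v,w)$ that vanish when $t w=0$ contribute $O(t\|w(v)\|)=O(t\|v\|)$. The remaining term is $-t\,A(t,v,w)\,\D w(v)$ with $A$ smooth and $A(0,0,0)=I$. Subtracting the value at $v=0$ gives a difference bounded by $t\,\omega(\|v\|)$ with $\omega(r)\to0$, uniformly for $t$ in a bounded set.

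Since admissible steps are eventually bounded (constant or vanishing), we restrict to $k\ge K$ with $\alpha_k\le\bar\alpha$. This yields the required bound with $r$ independent of $k$. Theorem~\ref{theorem:csmt-general} only needs $k\ge K$, so this is harmless.

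\textbf{Luzin.} For Luzin $N^{-1}$, I will adapt Lemma~\ref{lemma:all-maps-have-luzin}. It suffices to show that $A=\{\alpha>0: g_\alpha \text{ fails Luzin } N^{-1}\}$ has measure zero; the composition argument with $h_k$ is then unchanged. By the manifold version of Lemma~\ref{lemma:luzin-verify}, applied in charts, $g_\alpha$ is Luzin iff $\det\D g_\alpha(x)\ne0$ for almost all $x$. Consider $B=\{(\alpha,x):\det \D g_\alpha(x)=0\}$. For fixed $x$, the map $\alpha\mapsto\det \D g_\alpha(x)$ is real analytic, since $\Retr_x$ is real analytic, and it equals $1$ at $\alpha=0$ because $\D\Retr_x(0)=I$. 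So for each $x$ it has a discrete zero set in $(0,\infty)$, and the $x$-slices of $B$ are null. By Fubini, $B$ is null, hence almost every $\alpha$-slice is null, i.e.\ $A$ is null.

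For Fubini I would work in a chart, using a countable atlas. The one point needing care is that $\D g_\alpha(x)$ involves $\D_x$ of $\Retr_x$ in its base point, so analyticity of the full map $\Retr$ on $\T\calM$ is used, not only of $\Retr_x$. This is available because $\calM$ is real analytic, and I would assume $\Retr$ analytic on $\T\calM$ as in \citep{musat2025saddlelinesearch}. With both ingredients secured, Theorem~\ref{thm:global-avoidance-nonautonomous-riemannian} concludes.
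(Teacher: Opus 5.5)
Your overall architecture matches the paper's. You lift with $\Exp_{x^\ast}$, take $T_k=I-\alpha_k\Hess f(x^\ast)$ with $\lambda_k=1$, $\mu_k=1+c\alpha_k$, $\varepsilon_k=c\alpha_k/5$, prove a uniform small-Lipschitz bound of order $\alpha_k$, secure Luzin maps for almost all $\alpha_0$, and conclude with Theorem~\ref{thm:global-avoidance-nonautonomous-riemannian}. For the small-Lipschitz step, the paper applies Lemma~\ref{lemma:uniform-alpha-control} to $h(\alpha,v)=\D\tilde g_\alpha(v)-\D\tilde g_\alpha(0)$. Your worry that $\partial_s\D_vG$ might not exist is unfounded. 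The variable $s$ enters only through the smooth map $\Retr$ and the product $s\,w(v)$, so $\partial_s\D_vG$ involves only $w$, $\D w$ and derivatives of $\Retr,\psi,\hat\psi$, and it is jointly continuous. Your chain-rule expansion is a correct, equivalent way to obtain the same bound.

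The genuine gap is in the Luzin step. The paper does not reprove that $A=\{\alpha>0 : g_\alpha \text{ fails Luzin } N^{-1}\}$ is null. It imports this from \citep[Thm.~3.11]{musat2025saddlelinesearch} under exactly the hypothesis in the statement, namely that each $v\mapsto\Retr_x(v)$ is real analytic (Lemma~\ref{lemma:all-maps-have-luzin-riemannian}). Your Fubini argument needs $\alpha\mapsto\D g_\alpha(x)$ to be analytic for almost every $x$. But $\D g_\alpha(x)$ contains the base-point derivative of $\Retr$ evaluated along $-\alpha\grad f(x)$. Analyticity of each fiber map $\Retr_x$ does not give analyticity of that base-point derivative: a jointly smooth family of analytic functions can have a non-analytic $x$-derivative. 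Nor does a real-analytic structure on $\calM$ make a given smooth retraction jointly analytic on $\T\calM$. So your claim that this is ``available because $\calM$ is real analytic'' is unjustified. As written, you prove the theorem only under the stronger assumption that $\Retr$ is real analytic on all of $\T\calM$. To get the stated result, take the null-ness of $A$ from the cited theorem, as the paper does. Even under your stronger hypothesis, one technical point needs fixing. $\det\D g_\alpha(x)$ is chart-dependent, and $g_\alpha(x)$ can leave a chart as $\alpha$ varies. Work instead with the rank condition: use local analytic determinants in analytic charts and propagate the identity theorem along $[0,\infty)$ from $\alpha=0$. There $\D g_0(x)=I$ because $g_0=\mathrm{id}$, which is a consequence of $\Retr_x(0)=x$, not of $\D\Retr_x(0)=I$.
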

\noindent This follows from Theorem~\ref{thm:global-avoidance-nonautonomous-riemannian}: we prove first that strict saddles of $f$ are NPH unstable fixed points of~\eqref{eq:RGD} (Lemma~\ref{lemma:rgd-saddle-avoidance}).
Then, leveraging the real-analyticity assumptions, we confirm that all~\eqref{eq:RGD} maps have the Luzin $N^{-1}$ property because each step size $\alpha_k$ depends regularly on the initial step size (Lemma~\ref{lemma:all-maps-have-luzin-riemannian}). 

\begin{lemma} \label{lemma:rgd-saddle-avoidance}
    Let $\calM$ be a complete Riemannian manifold and let $f \colon \calM \to \reals$ be $C^2$.
    If $( \alpha_k )_{k \geq 0}$ is admissible (Definition~\ref{def:admissiblesequence}) and bounded, and $\sum_{k \geq 0} \alpha_k = \infty$,
    then every strict saddle point of $f$ is an NPH unstable fixed point for \eqref{eq:RGD} (Definition~\ref{def:NPHmanifolds}).
\end{lemma}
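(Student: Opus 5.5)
The plan is to reduce to Lemma~\ref{lemma:csmt-sequence-of-gradient-maps} by lifting to $\T_{x^\ast}\calM$. Let $x^\ast$ be a strict saddle and choose $\psi = \Exp_{x^\ast}$ and $\hat\psi$ as in Remark~\ref{rem:choiceofpsihatpsi}, so that $\D\psi(0)$ and $\D\hat\psi(x^\ast)$ are the identity. Then $\tilde g_k = \hat\psi\circ g_k\circ\psi$ fixes $0$, since $\grad f(x^\ast)=0$ and $\Retr_{x^\ast}(0)=x^\ast$. Write $\tilde g_k(v) = G(v, \alpha_k)$, where
\begin{align*}
G(v,\alpha) \coloneqq \hat\psi\big(\Retr_{\psi(v)}(-\alpha\,\grad f(\psi(v)))\big).
\end{align*}
$G$ is $C^1$ jointly in $(v,\alpha)$ near $(0,\alpha)$ for every $\alpha$, because $f$ is $C^2$ and $\Retr$, $\psi$, $\hat\psi$ are smooth. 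Moreover $G(v,0)=\hat\psi(\psi(v)) = v$ for $v$ near $0$. The linearization is $T_k \coloneqq \D\tilde g_k(0) = I - \alpha_k \Hess f(x^\ast)$. This uses $\D\Retr_{x^\ast}(0)=\mathrm{id}$, that the derivative of $x\mapsto \grad f(x)$ at a critical point is the Hessian (independent of connection there), and that $\D\psi(0)$, $\D\hat\psi(x^\ast)$ are identities. So $T_k$ has exactly the same spectral form as in the Euclidean case, with the eigenvalues $h_i$ of $\Hess f(x^\ast)$, which is self-adjoint on $\T_{x^\ast}\calM$ with the Riemannian inner product.

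Given this, the splitting $\Ecs\oplus\Eu$, the norms, and the constants $\lambda_k=1$, $\mu_k = 1 + c\alpha_k$, $\varepsilon_k = c\alpha_k/5$ are chosen exactly as in the proof of Lemma~\ref{lemma:csmt-sequence-of-gradient-maps}, using admissibility. The non-summability then follows from $\sum\alpha_k=\infty$ via Lemma~\ref{lemma:summability}. The only remaining point is the uniform local small-Lipschitz bound: we need an analogue of Lemma~\ref{lemma:local-lipschitz-control-GD}, namely that for every $c'>0$ there is $r>0$ with $\Lip\big((\tilde g_k - T_k)|_{B_r(0)}\big)\le c'\alpha_k$ for all $k$.

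That bound is the main obstacle, since $\tilde g_k$ is no longer affine in $\alpha_k$. Here boundedness of $(\alpha_k)$ comes in. Let $\bar\alpha = \sup_k \alpha_k$. Since $G(v,0)=v$, we can write
\begin{align*}
\D_v G(v,\alpha) - \D_v G(0,\alpha) = \int_0^1 \big(\partial_\alpha \D_v G(v,s\alpha) - \partial_\alpha \D_v G(0,s\alpha)\big)\,\alpha\, ds,
\end{align*}
which is $\alpha\,\omega(\|v\|)$ with $\omega(r)\to0$. The required uniform continuity of $\partial_\alpha\D_v G$ on the compact set $\overline{B_r(0)}\times[0,\bar\alpha]$ is plausible but needs care, because $f$ is only $C^2$. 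The mixed derivative $\partial_\alpha \D_v G$ involves $\D(\grad f)$, which is continuous, composed with smooth derivatives of $\Retr$, $\psi$, $\hat\psi$. Differentiating first in $\alpha$ and then in $v$ only uses one derivative of $\grad f$, so it is continuous.

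If that order-of-derivatives issue is troublesome, the fallback is to write $\tilde g_k(v) = \Phi(v, \alpha_k\, w(v))$ with $w(v) = \grad f(\psi(v))$ expressed in a fixed frame. Here $\Phi$ is smooth, $\Phi(v,0)=v$ and $w$ is $C^1$. The difference $\Phi(v,\alpha w)-v$ then has $v$-derivative of size $O(\alpha)$ times a continuous function that is close to its value at $0$ for $v$ near $0$. Taking the mean value bound as in Lemma~\ref{lemma:local-lipschitz-control-GD} gives $\Lip \le \alpha_k\,\omega(r)$. Choosing $r$ with $\omega(r)\le \varepsilon_k/(4\alpha_k) = c/20$ completes the NPH verification in the sense of Definition~\ref{def:NPHmanifolds}.
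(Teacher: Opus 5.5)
Your proposal is correct and follows essentially the same route as the paper. You lift via $\psi=\Exp_{x^\ast}$ and $\hat\psi$, take $T_k = I-\alpha_k\Hess f(x^\ast)$ with $\lambda_k=1$, $\mu_k=1+c\alpha_k$, $\varepsilon_k=c\alpha_k/5$, and get the uniform small-Lipschitz bound by integrating $\partial_\alpha\bigl(\D_v\tilde g_\alpha(v)-\D_v\tilde g_\alpha(0)\bigr)$ from $\alpha=0$ over the compact range $[0,\sup_k\alpha_k]$, which is exactly what Lemma~\ref{lemma:uniform-alpha-control} does; your explicit check that this mixed derivative is jointly continuous when $f$ is only $C^2$ is a detail the paper's invocation of that lemma leaves implicit.
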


\begin{proof}
Let $x^\ast$ be a strict saddle point of $f$.
Select maps $\psi, \hat\psi$ and neighborhoods $U, V$ conforming to~\eqref{eq:psihatpsi} as instructed in Remark~\ref{rem:choiceofpsihatpsi}.
Lift the iteration maps as $\tilde g_k \coloneqq \hat\psi \circ g_k \circ \psi$.

To show that $0 \in \T_{x^\ast}\calM$ is NPH unstable for $(\tilde{g}_k)_{k \geq 0}$ in the sense of Definition~\ref{def:unstable-fixed-point}, as usual, we start by defining the linear maps
\begin{align*}
    T_k \coloneqq 
    \D \tilde{g}_k(0)
    & = \D\hat\psi(g_k(\psi(0))) \circ \D g_k(\psi(0)) \circ \D\psi(0) \\
    & = \D\hat\psi(x^\ast) \circ \D g_k(x^\ast) \circ \D\psi(0) = \D g_k(x^\ast),
\end{align*}
where we used $\psi(0) = x^\ast$ and $g_k(x^\ast) = x^\ast$, as well as the fact that $\D\psi(0)$ and $\D\hat\psi(x^\ast)$ are identity (as noted in Remark~\ref{rem:choiceofpsihatpsi}).
It follows that \citep[Lem.~3.8]{musat2025saddlelinesearch}:
\begin{align} \label{eq:T_k_riemannian}
    T_k = 
    \D g_k(x^\ast) = 
    I - \alpha_k \Hess f(x^\ast),
\end{align}
where $\Hess f$ denotes the (Riemannian) Hessian of $f$.

We now check all conditions in Definition~\ref{def:unstable-fixed-point}, starting with the pairs $(\tilde g_k, T_k)$.

First, $\tilde{g}_k(0) = \hat\psi(g_k(\psi(0))) = 0$ for all $k$, so~\aref{assumption:fixed-origin} is satisfied.
Next, $T_k$~\eqref{eq:T_k_riemannian} is diagonalizable in an eigenbasis of $\Hess f(x^\ast)$.
Hence, \aref{assumption:invariance} and \aref{assumption:pseudo-hyperbolic} follow exactly as in the Euclidean case (Lemma~\ref{lemma:csmt-sequence-of-gradient-maps}).
Some details follow.

The admissibility of $(\alpha_k)$ yields a fixed splitting $\T_{x^\ast} \calM = \Ecs \oplus \Eu$ such that $T_k(\Ecs)\subseteq \Ecs$ and $T_k(\Eu)\subseteq \Eu$ for all $k$.
Recall that NPH depends on a choice of norms; here we use the norms induced by the Riemannian metric at $x^\ast$ when restricted to $\Ecs$ and $\Eu$, and equip $\T_{x^\ast} \calM = \Ecs \oplus \Eu$ with the associated max-norm as in Eq.~\eqref{eq:Rd-norm}.
Then, as in the Euclidean case, for all large enough $k$,
$\|T_k y\| \leq \|y\|$ on $\Ecs$ and $\| T_k z\| \geq (1+c\alpha_k) \|z\|$ on $\Eu$.
In particular, we may take $\lambda_k \coloneqq 1$ and $\mu_k \coloneqq 1+c\alpha_k$ to satisfy~\aref{assumption:pseudo-hyperbolic}.

Choosing $\varepsilon_k \coloneqq (c \alpha_k)/5$ ensures $\varepsilon_k < (\mu_k - \lambda_k)/4$.
Since the choice of constants $\mu_k, \lambda_k, \varepsilon_k$ is the same as in the Euclidean case, the non-summability condition for $\frac{\varepsilon_k}{\mu_k - 2\varepsilon_k}$ follows identically.

It remains to verify the $(\varepsilon_k / 4)$-Lipschitz control required by~\aref{assumption:small-lipschitz}.
To this end, it suffices to show that, for all $c > 0$, there exist an index $k_0 \geq 0$ and some radius $r > 0$ (small enough so that $B_r(0)$ is included in $V$) such that 
\begin{align} \label{eq:rgd-small-lipschitz}
    \Lip \Big( (\tilde{g}_k - \D \tilde{g}_k(0))\vert_{B_{r}(0)} \Big) \leq c \alpha_k
    \qquad \textrm{for all } k \geq k_0.
\end{align}
In the Euclidean case, we did so with Lemma~\ref{lemma:local-lipschitz-control-GD}.
However, that lemma crucially relies on the explicit identity $\D g_k(x) = I - \alpha_k \nabla^2 f(x)$ holding for all points $x$.
In the Riemannian setting, this is only satisfied at critical points.
Nevertheless, we can define $g_\alpha(x) = \Retr_x(-\alpha \grad f(x))$ and $\tilde{g}_\alpha = \hat\psi \circ g_\alpha \circ \psi$, then
\begin{equation*}
    h \colon \reals \times \T_{x^\ast} \calM \to \textrm{Lin}(\T_{x^\ast} \calM,  \T_{x^\ast} \calM) \colon (\alpha, v) \mapsto h(\alpha, v) = \D \tilde{g}_{\alpha}(v) - \D \tilde{g}_{\alpha}(0).
\end{equation*}
Observe that
$h(\alpha, 0) = 0$ for all $\alpha$ (trivially), and
$h(0, v) = 0$ for all $v \in V$ (because $g_0$ is identity and $\tilde{g}_0 = \hat\psi \circ \psi$ is identity on $V$).
Also, $h$ is smooth in $\alpha$ and jointly continuous in $(\alpha, v)$.
Then, since the step sizes are bounded (note this is the only place where the boundedness assumption is used), the desired Lipschitz control in Eq.~\eqref{eq:rgd-small-lipschitz} follows from Lemma~\ref{lemma:uniform-alpha-control}.
\end{proof}

To prove Theorem~\ref{thm:RGDmain}, it only remains to verify the Luzin $N^{-1}$ property for the iteration maps $g_k$ of~\eqref{eq:RGD}.
The following lemma provides what is needed:

\begin{lemma} \label{lemma:all-maps-have-luzin-riemannian}
    Assume $\calM$ is a real-analytic manifold with a Riemannian metric and a retraction $\Retr \colon \T\calM \to \calM$ such that, for each $x \in \calM$, the map $v \mapsto \Retr_x(v)$ is real analytic.
    Let $f \colon \calM \to \reals$ be $C^2$.
    Then, under the same step size generation assumptions as in Lemma~\ref{lemma:all-maps-have-luzin}, for almost all $\alpha_0 > 0$, every iteration map $g_k$ of~\eqref{eq:RGD} satisfies the Luzin $N^{-1}$ property.
\end{lemma}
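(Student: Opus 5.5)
The plan mirrors the proof of Lemma~\ref{lemma:all-maps-have-luzin}: first show that the set
\begin{align*}
    A \coloneqq \{ \alpha > 0 \mid x \mapsto \Retr_x(-\alpha \grad f(x)) \textrm{ does not satisfy Luzin } N^{-1} \}
\end{align*}
has measure zero. Then pull it back through the maps $h_k$. Each $h_k$ has the Luzin $N^{-1}$ property by Lemma~\ref{lemma:luzin-verify}, since $h_k' \neq 0$ almost everywhere. Hence each $h_k^{-1}(A)$ is null, and so is the countable union $\bigcup_{k\ge 0} h_k^{-1}(A)$ of bad initial step sizes. Only the first claim needs new work; the Euclidean analogue is \citep[Lem.~3.7]{musat2025saddlelinesearch}, and I would adapt its argument.

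To show $A$ is null, I would use the chart-based notion of measure zero together with a manifold version of Lemma~\ref{lemma:luzin-verify}. Since $g_\alpha$ is $C^1$ between $d$-dimensional manifolds, I would check in charts (using second countability) that $g_\alpha$ has Luzin $N^{-1}$ as soon as $\D g_\alpha(x)$ is invertible for almost every $x \in \calM$. Now fix $x \in \calM$ and set $v = \grad f(x)$. The map $\alpha \mapsto \D g_\alpha(x)$ is where real analyticity enters. In a real-analytic chart around $x$, write $g_\alpha(y) = \Retr_y(-\alpha \grad f(y))$. Its differential at $x$ splits into a term from varying the base point and a term $-\alpha\, \D(\Retr_x)(-\alpha v)[\D(\grad f)(x)\,\cdot\,]$. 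So $D(\alpha,x) \coloneqq \det \D g_\alpha(x)$ (in charts) is a function of $\alpha$ which, for fixed $x$, I expect to be real analytic on an interval containing $0$ and $\alpha$, with $D(0,x) = 1$ because $g_0 = \mathrm{id}$.

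The main obstacle is exactly this analyticity in $\alpha$. The retraction is only assumed analytic in $v$ for fixed $x$, while the base-point derivative $\partial_x \Retr_x(w)$ also appears. I would handle it by noting that $\calM$ being real analytic lets one take $\Retr$ analytic in charts. Failing that, I would use only that $\partial_x\Retr_x(w)$ is analytic in $w$, which follows by differentiating an analytic family; this needs care and may require strengthening the assumption slightly. Once analyticity holds, $\alpha \mapsto D(\alpha,x)$ is a nonzero analytic function on $(0,\infty)$, so its zero set is countable, hence null.

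To conclude, apply Fubini/Tonelli to the measurable set
\begin{align*}
    Z \coloneqq \{ (\alpha, x) \in \positive \times \calM \mid D(\alpha,x) = 0 \},
\end{align*}
working in countably many charts. For each fixed $x$ the $\alpha$-slice is null, so $Z$ is null. Hence for almost every $\alpha$ the $x$-slice is null, that is, $\D g_\alpha(x)$ is invertible for almost every $x$, and $g_\alpha$ has Luzin $N^{-1}$. This shows $A$ is null and completes the argument above.
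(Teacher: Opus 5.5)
\textbf{Verdict: the proof has a genuine gap.} Your outer reduction is exactly the paper's proof: show $A$ is null, pull it back through each $h_k$ (Luzin $N^{-1}$ by Lemma~\ref{lemma:luzin-verify}), and take a countable union. The difference is that the paper does not prove that $A$ is null; it imports this from \citep[Thm.~3.11]{musat2025saddlelinesearch}.

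Your own argument for that claim breaks at exactly the step you flag. Under the stated hypothesis, only $v \mapsto \Retr_x(v)$ is analytic for each fixed $x$. In a chart, however, the differential
$\D g_\alpha(x) = \partial_x \Retr(x, -\alpha \grad f(x)) - \alpha\, \partial_v \Retr(x, -\alpha \grad f(x))\, \D \grad f(x)$
involves the base-point derivative. Neither of your fixes handles this:
\begin{itemize}
\item The retraction is given, so the analytic structure of $\calM$ does not let you ``take'' it analytic in charts.
\item Differentiating a smooth family of analytic maps with respect to the parameter does not preserve analyticity.
\end{itemize}

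\textbf{Counterexample.} On $\calM = \reals$, let $\psi$ be smooth and bounded with bounded derivatives, with $\psi(v) = -1/v^2$ on $[-2,-1]$. Let $u$ be its heat-flow evolution: $u$ is smooth on $[0,\infty)\times\reals$, $u(0,\cdot)=\psi$, and $u(t,\cdot)$ is entire for $t>0$. Then $\Retr_x(v) = x + v + x v^2 u(x^2, v)$ is a smooth retraction and every $\Retr_x$ is analytic. Yet $\partial_x \Retr_x(v)\vert_{x=0} = 1 + v^2\psi(v)$. With $f(x) = x$ you get $\D g_\alpha(0) = 1 + \alpha^2\psi(-\alpha) = 0$ for all $\alpha \in [1,2]$, although $\D g_0(0) = 1$. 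So your per-$x$ claim, that $\alpha \mapsto \det \D g_\alpha(x)$ has only countably many zeros, is false as stated.

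\textbf{What would close the gap.} Your Fubini step needs that claim only for almost every $x$. You would therefore have to show that such bad base points form a null set, and the stated hypotheses give no evident handle on that. Your argument does go through if you assume $\Retr$ jointly real analytic on $\T\calM$. Even then, you must patch analytic charts along the curve $\alpha \mapsto g_\alpha(x)$ via the identity theorem, since no single chart need contain it. Under the lemma's actual hypothesis, the missing ingredient is precisely what the paper takes from the citation.
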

\noindent The proof is identical to that of Lemma~\ref{lemma:all-maps-have-luzin} because the set 
\begin{align*}
    A \coloneqq \{ \alpha > 0 \mid x \mapsto \Retr_x(-\alpha \grad f(x)) \textrm{ does not satisfy Luzin } N^{-1} \}
\end{align*}
has measure zero \citep[Thm.~3.11]{musat2025saddlelinesearch}, owing to real analyticity.

\section{Proximal point method with non-constant step sizes} \label{section:pp}

For a different application of Theorem~\ref{thm:global-avoidance-nonautonomous}, consider a $C^2$ cost function $f \colon \reals^d \to \reals$ with $L$-Lipschitz continuous gradient.
The proximal point method for minimizing $f$ initialized at $x_0$ iterates $x_{k+1} = g_k(x_k)$ using the maps
\begin{align}
    g_k(x) = \argmin_{z \in \reals^d} \! \left[ f(z) + \frac{1}{2 \alpha_k} \| z - x \|_2^2 \right]
    \tag{PP}
    \label{eq:pp-update}
\end{align}
with step sizes $\alpha_k \in (0, 1/L)$.
These form a non-autonomous dynamical system.
We show that this method, too, avoids strict saddles.
The proof is a combination of the three lemmas below.
\begin{theorem} \label{thm:pp-avoidance}
    Let $f \colon \reals^d \to \reals$ be $C^2$ with $L$-Lipschitz continuous gradient and let $(\alpha_k)_{k \geq 0}$ be a sequence of step sizes in $(0, 1/L)$.
    If $\alpha_{\max} \coloneqq \sup_{k \geq 0} \alpha_k < 1/L$ and $\sum_{k=0}^\infty \alpha_k = \infty$, then \eqref{eq:pp-update} avoids the strict saddle points of $f$.
\end{theorem}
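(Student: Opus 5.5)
We will apply Theorem~\ref{thm:global-avoidance-nonautonomous} to the maps~\eqref{eq:pp-update}. Two things must be checked: each $g_k$ has the Luzin $N^{-1}$ property, and every strict saddle of $f$ is an NPH unstable fixed point of $(g_k)_{k\ge0}$.

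\textbf{Step 1: structure of $g_k$.} Since $\alpha_k < 1/L$, the function $z \mapsto f(z) + \frac{1}{2\alpha_k}\|z-x\|_2^2$ is $(1/\alpha_k - L)$-strongly convex. Hence $g_k(x)$ is well defined and unique, and it is characterized by $g_k(x) + \alpha_k \nabla f(g_k(x)) = x$. In other words, $g_k = F_k^{-1}$ with $F_k(z) := z + \alpha_k \nabla f(z)$. The Jacobian $\D F_k(z) = I + \alpha_k \nabla^2 f(z)$ has eigenvalues at least $1 - \alpha_k L > 0$, so $F_k$ is a $C^1$ diffeomorphism of $\reals^d$. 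It is injective because $\langle F_k(z)-F_k(z'), z-z'\rangle \ge (1-\alpha_k L)\|z-z'\|^2$, and it is surjective by the strong convexity argument above. Consequently $g_k$ is $C^1$ with $\D g_k(x) = (I + \alpha_k \nabla^2 f(g_k(x)))^{-1}$, which is invertible everywhere. Lemma~\ref{lemma:luzin-verify} then gives the Luzin $N^{-1}$ property. Alternatively, $g_k^{-1}(E) = F_k(E)$, and $F_k$ is locally Lipschitz, so it maps null sets to null sets.

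\textbf{Step 2: strict saddles are NPH.} Fix a strict saddle, shifted to $x^\ast = 0$. Then $g_k(0)=0$. Set $T_k := \D g_k(0) = (I+\alpha_k H)^{-1}$ with $H = \nabla^2 f(0)$, whose eigenvalues $h_i$ lie in $[-L,L]$. The eigenvalues of $T_k$ are $1/(1+\alpha_k h_i)$.
\begin{itemize}
\item Take $\Ecs$ as the span of eigenvectors with $h_i \ge 0$ and $\Eu$ as the span of those with $h_i<0$. This splitting is independent of $k$, and $\Eu \ne \{0\}$ since $x^\ast$ is a strict saddle.
\item On $\Ecs$ we take $\lambda_k = 1$.
\item On $\Eu$ we take $\mu_k = 1/(1-\alpha_k|h_{\min}|) \ge 1 + \alpha_k |h_{\min}| =: 1 + c\alpha_k$, where $h_{\min}$ is the negative eigenvalue closest to $0$. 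Using the Euclidean norm on each subspace, this bound is valid because $\alpha_k|h_j| \le \alpha_{\max}L < 1$.
\item Set $\mu_k := 1 + c\alpha_k$ and $\varepsilon_k := c\alpha_k/5$. Non-summability of $\varepsilon_k/(\mu_k - 2\varepsilon_k)$ then follows from $\sum\alpha_k=\infty$, exactly as in Lemma~\ref{lemma:csmt-sequence-of-gradient-maps}.
\end{itemize}

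\textbf{Step 3: small-Lipschitz control (the main obstacle).} We need $r>0$ with $\Lip((g_k - T_k)|_{B_r(0)}) \le c\alpha_k/20$ for all $k$. For $\|x\|\le r$, first note that $\|g_k(x)\| \le \|x\|/(1-\alpha_{\max}L) \le r/(1-\alpha_{\max}L)$, since $g_k$ is Lipschitz with that constant. Next, write
\[
\D g_k(x) - T_k = (I+\alpha_k H_x)^{-1}\,\alpha_k(H - H_x)\,(I+\alpha_k H)^{-1},
\]
where $H_x = \nabla^2 f(g_k(x))$. This gives
\[
\|\D g_k(x) - T_k\| \le \alpha_k\, \frac{\omega\bigl(r/(1-\alpha_{\max}L)\bigr)}{(1-\alpha_{\max}L)^2},
\]
with $\omega$ the modulus of continuity of $\nabla^2 f$ at $0$, as in Lemma~\ref{lemma:local-lipschitz-control-GD}. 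Because the two inverses are uniformly bounded, choosing $r$ small makes this at most $c\alpha_k/20$, uniformly in $k$. The hypothesis $\sup_k\alpha_k<1/L$ is exactly what provides these uniform bounds. One caveat: the max-norm from~\eqref{eq:Rd-norm} differs from the $2$-norm, but only by a constant factor, which can be absorbed by shrinking $r$.

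With Steps 1--3, every strict saddle is an NPH unstable fixed point and every $g_k$ has the Luzin $N^{-1}$ property, so Theorem~\ref{thm:global-avoidance-nonautonomous} gives the result.
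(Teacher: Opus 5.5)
Your proposal is correct and follows essentially the same route as the paper. Both arguments proceed the same way: each PP map is a $C^1$ diffeomorphism with inverse $z \mapsto z + \alpha_k \nabla f(z)$, hence Luzin $N^{-1}$; strict saddles are then NPH unstable via the fixed eigen-splitting of $\nabla^2 f(0)$, with $\lambda_k = 1$, $\varepsilon_k \propto \alpha_k$, and uniform small-Lipschitz control coming from $\sup_k \alpha_k < 1/L$. The differences are cosmetic: you take the weaker $\mu_k = 1 + c\alpha_k$ instead of the paper's exact $1/(1+\alpha_k h_{s+1})$, and you bound $\D g_k(x) - T_k$ through the resolvent identity instead of the paper's decomposition via $\nabla f(z) - \nabla^2 f(0)[z]$, which yields the identical estimate $\alpha_k \rho^2 \omega(\rho r)$ with $\rho = 1/(1-\alpha_{\max}L)$.
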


\begin{remark}
    Unlike Theorems~\ref{thm:gd-vanishing-theorem} and~\ref{thm:RGDmain}, this result does not require an ``almost all'' qualification on the choice of the initial step size.
    The reason is that we restrict the steps to satisfy $\alpha_k \in (0, 1/L)$, which ensures that all maps have the Luzin $N^{-1}$ property.
    Under the same assumptions, an identical conclusion holds for GD and RGD; see Remark~\ref{remark:gd-restricted-step-always-luzin}.
\end{remark}

Let us first argue that for $\alpha_k \in (0, 1/L)$ the maps $g_k$ are well defined.
Since $\nabla f$ is $L$-Lipschitz and $f$ is $C^2$, we have $\nabla^2 f(z) \succeq - L I$ for all $z$, hence 
$ \nabla^2 f(z) + \frac{1}{\alpha_k} I \succeq (\frac{1}{\alpha_k} - L) I \succ 0 $.
Thus, the objective to be minimized in the~\eqref{eq:pp-update} iteration is strongly convex: it has a unique minimizer.

As in previous sections, we deduce Theorem~\ref{thm:pp-avoidance} from Theorem~\ref{thm:global-avoidance-nonautonomous}.
To do so, we show that the strict saddles of $f$ are NPH unstable fixed points of the system $(g_{k})_{k \geq 0}$.
As a first step, in the next lemma we show that the~\eqref{eq:pp-update} maps are $C^1$ diffeomorphisms (this is standard).
In particular, they satisfy the Luzin $N^{-1}$ property by Lemma~\ref{lemma:luzin-verify}.

\begin{lemma} \label{lemma:pp-inverse-gd-minus-f}
    Let $f \colon \reals^d \to \reals$ be $C^2$ with $L$-Lipschitz continuous gradient.
    For all $\alpha \in (0, 1/L)$, the map $g_{\alpha}(x) = \argmin_{z \in \reals^d}\!\big[ f(z) + \frac{1}{2\alpha} \| z - x \|_2^2 \big]$ is a $C^1$ diffeomorphism from $\Rd$ to $\Rd$, with inverse $u_\alpha(x) = x + \alpha \nabla f(x)$.
    Its differential at $x \in \reals^d$ is
    \begin{align}
        \D g_\alpha(x) = 
        \big( I + \alpha \nabla^2 f(g_\alpha(x)) \big)^{-1}.
        \label{eq:D_g_alpha_pp}
    \end{align}
\end{lemma}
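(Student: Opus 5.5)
The approach is to work with the explicit map $u_\alpha(x) = x + \alpha \nabla f(x)$: show it is a $C^1$ diffeomorphism of $\reals^d$, then identify $g_\alpha$ as its inverse via first-order optimality, and read off the differential from the inverse function theorem.

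\textbf{Step 1 ($g_\alpha$ is a right inverse of $u_\alpha$).} Fix $x$. As argued just before the lemma, $z \mapsto f(z) + \frac{1}{2\alpha}\|z-x\|_2^2$ is strongly convex, so $g_\alpha(x)$ is its unique minimizer. That minimizer is characterized by the stationarity condition
\begin{align*}
    \nabla f(z) + \tfrac{1}{\alpha}(z - x) = 0,
    \qquad \text{that is,} \qquad
    z + \alpha \nabla f(z) = x .
\end{align*}
By strong convexity, this equation has exactly one solution $z$. Hence $u_\alpha(g_\alpha(x)) = x$ for all $x$, so $u_\alpha$ is surjective.

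\textbf{Step 2 ($u_\alpha$ is a bijection and $g_\alpha = u_\alpha^{-1}$).} For injectivity, suppose $u_\alpha(z_1) = u_\alpha(z_2) = x$. Then both $z_1$ and $z_2$ are stationary points of the strongly convex objective associated with $x$, so $z_1 = z_2$. Alternatively, use $\|u_\alpha(z_1) - u_\alpha(z_2)\| \geq (1-\alpha L)\|z_1 - z_2\|$, which follows from $\|\nabla f(z_1) - \nabla f(z_2)\| \leq L\|z_1 - z_2\|$. Therefore $u_\alpha$ is bijective and $g_\alpha = u_\alpha^{-1}$.

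\textbf{Step 3 (smoothness and the differential).} Since $f$ is $C^2$, $u_\alpha$ is $C^1$ with $\D u_\alpha(z) = I + \alpha \nabla^2 f(z)$. From $\nabla^2 f(z) \succeq -L I$ (Lipschitz gradient) we get $I + \alpha\nabla^2 f(z) \succeq (1-\alpha L) I \succ 0$, so this differential is invertible everywhere. By the inverse function theorem, the bijection $u_\alpha$ has a $C^1$ inverse, and $u_\alpha$ is a $C^1$ diffeomorphism. Differentiating $u_\alpha(g_\alpha(x)) = x$ by the chain rule gives
\begin{align*}
    \bigl(I + \alpha \nabla^2 f(g_\alpha(x))\bigr)\,\D g_\alpha(x) = I,
\end{align*}
which is Eq.~\eqref{eq:D_g_alpha_pp}.

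The only point that needs care is that the inverse function theorem is local. Global $C^1$ regularity of the inverse follows because we already have a global bijection and the local inverses must coincide with it. No step is a real obstacle; Step 2's injectivity is the main place where $\alpha < 1/L$ is used.
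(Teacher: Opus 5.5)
Your proposal is correct and follows the paper's proof essentially step for step. The stationarity condition gives $u_\alpha \circ g_\alpha = \mathrm{id}$. Injectivity comes from the bound $\|u_\alpha(z_1)-u_\alpha(z_2)\| \geq (1-\alpha L)\|z_1-z_2\|$, with uniqueness of the stationary point as an equivalent alternative. Finally, the inverse function theorem, with the local $C^1$ inverses coinciding with the global inverse $g_\alpha$, yields both the $C^1$ regularity and the formula for $\D g_\alpha$.
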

\begin{proof}
    By the optimality condition for the minimization problem within $g_\alpha$, we have that
    \begin{align}
        x = \alpha \nabla f(g_{\alpha}(x)) + g_{\alpha}(x) = u_{\alpha}(g_{\alpha}(x)) 
        && \textrm{ for all } x \in \reals^d.
        \label{eq:pp-optim}
    \end{align}
    In particular, $u_\alpha$ is surjective.
    Also, $u_{\alpha}$ is injective owing to $\alpha < 1/L$ since, for all $x, y \in \reals^d$,
    \begin{align*}
        \| u_{\alpha}(x) - u_{\alpha}(y) \|_2 = 
        \| (x - y) + \alpha (\nabla f(x) - \nabla f(y)) \|_2 \geq
        (1 - \alpha L) \| x - y \|_2.
    \end{align*}
    Thus, $u_\alpha$ is bijective, and its inverse is $g_\alpha$ owing to Eq.~\eqref{eq:pp-optim}.

    Since $\D u_{\alpha}(x) = I + \alpha \nabla^2 f(x)$ and $\nabla^2 f(x) \succeq -L I$, it follows (again from $\alpha < 1/L$) that $\D u_{\alpha}(x)$ is invertible.
    By the inverse function theorem, for each $x$ there exists a neighborhood on which $u_\alpha$ has a unique inverse, and that inverse is $C^1$.
    This local inverse must coincide with $g_\alpha$, hence $g_\alpha$ is $C^1$ and, as claimed, 
    $ \D g_\alpha(x) = \big( \D u_\alpha(g_\alpha(x)) \big)^{-1} = \big( I + \alpha \nabla^2 f(g_\alpha(x)) \big)^{-1} $.
\end{proof}

The following is an analogue of Lemma~\ref{lemma:local-lipschitz-control-GD} for the proximal point iteration maps.

\begin{lemma} \label{lemma:local-lipschitz-control-PP}
    Let $f \colon \reals^d \to \reals$ be $C^2$ with $L$-Lipschitz gradient.
    Assume $\nabla f(0) = 0$.
    Then, for all $c > 0$ and $\alpha_{\max} < 1/L$, there exists $r > 0$ such that all~\eqref{eq:pp-update} maps $g_k$ with $0 < \alpha_k \leq \alpha_{\max}$ satisfy 
    \begin{equation} \label{eq:lipschitz-bound-pp}
        \Lip \big( (g_k - \D g_k(0))\vert_{B_r(0)} \big) \leq c \alpha_k.
    \end{equation}
\end{lemma}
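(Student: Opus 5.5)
We mirror the proof of Lemma~\ref{lemma:local-lipschitz-control-GD} and work with $h_k(x) \coloneqq g_k(x) - \D g_k(0)[x]$. Since $\nabla f(0) = 0$, the optimality condition~\eqref{eq:pp-optim} gives $g_k(0) = 0$. By Lemma~\ref{lemma:pp-inverse-gd-minus-f}, each $g_k$ is $C^1$ with
\begin{align*}
    \D h_k(x) = \big( I + \alpha_k \nabla^2 f(g_k(x)) \big)^{-1} - \big( I + \alpha_k \nabla^2 f(0) \big)^{-1}.
\end{align*}
Once $\D h_k$ is controlled on $B_r(0)$, the mean value inequality on the convex ball gives $\Lip(h_k\vert_{B_r(0)}) \leq \sup_{\|x\| \leq r} \|\D h_k(x)\|$, with operator norm taken w.r.t.\ the chosen norm. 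By equivalence of norms we may argue in the 2-norm and absorb a constant into $c$; this only changes $r$.

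\textbf{Step 1: factor out $\alpha_k$.} We use the resolvent identity $A^{-1} - B^{-1} = A^{-1}(B - A)B^{-1}$ with $A = I + \alpha_k \nabla^2 f(g_k(x))$ and $B = I + \alpha_k \nabla^2 f(0)$. This gives
\begin{align*}
    \D h_k(x) = \alpha_k \, A^{-1} \big( \nabla^2 f(0) - \nabla^2 f(g_k(x)) \big) B^{-1}.
\end{align*}
Since $\nabla^2 f \succeq -L I$ and $\alpha_k \leq \alpha_{\max} < 1/L$, both $A$ and $B$ are symmetric with eigenvalues at least $1 - \alpha_{\max} L > 0$. Hence $\|A^{-1}\|_2, \|B^{-1}\|_2 \leq (1 - \alpha_{\max} L)^{-1}$, uniformly in $k$. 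This is where $\alpha_{\max} < 1/L$ is essential.

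\textbf{Step 2: localize $g_k(x)$ uniformly in $k$.} This is the main obstacle. The Hessian is evaluated at $g_k(x)$ rather than at $x$, so we need $g_k(B_r(0))$ to lie in a small ball independent of $k$. Since $g_k = u_{\alpha_k}^{-1}$ with $u_\alpha(x) = x + \alpha \nabla f(x)$, the proof of Lemma~\ref{lemma:pp-inverse-gd-minus-f} shows $\|u_\alpha(x) - u_\alpha(y)\|_2 \geq (1 - \alpha L)\|x - y\|_2$. Therefore $\Lip(g_k) \leq (1 - \alpha_{\max} L)^{-1} \eqqcolon \kappa$, and with $g_k(0) = 0$ we get $\|g_k(x)\|_2 \leq \kappa \|x\|_2$ for all $k$.

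\textbf{Step 3: conclude.} Set $\omega(s) \coloneqq \max_{\|z\| \leq s} \|\nabla^2 f(z) - \nabla^2 f(0)\|$, which is continuous with $\omega(0) = 0$ as in Lemma~\ref{lemma:local-lipschitz-control-GD}. Combining the previous steps,
\begin{align*}
    \sup_{\|x\| \leq r} \|\D h_k(x)\| \leq \alpha_k \, \kappa^2 \, \omega(\kappa r).
\end{align*}
It then suffices to choose $r > 0$ with $\kappa^2 \omega(\kappa r) \leq c$, adjusted for the norm-equivalence constant. This choice is independent of $k$ and yields~\eqref{eq:lipschitz-bound-pp}.
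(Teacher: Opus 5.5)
Your proof is correct and follows essentially the same route as the paper. Both arguments use the same ingredients: the uniform bound $\rho = (1-\alpha_{\max}L)^{-1}$ on $\|\D g_k(x)\|$ and on $\Lip(g_k)$, the localization $g_k(B_r(0)) \subseteq B_{\rho r}(0)$, and continuity of $\nabla^2 f$ at $0$, and both reach the same final bound $\alpha_k \rho^2 \omega(\rho r)$.

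The only difference is cosmetic. You extract the factor $\alpha_k$ at the derivative level via the resolvent identity. The paper instead works at the level of maps, writing $g_\alpha(x) - \D g_\alpha(0)[x] = -\alpha\, \D g_\alpha(0)[\xi(g_\alpha(x))]$ with $\xi(z) = \nabla f(z) - \nabla^2 f(0)[z]$, and then applies the chain rule. Differentiating that identity gives your expression for $\D h_k(x)$, with the two inverses in the opposite order.
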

\begin{proof}
    Fix $\alpha_{\max} < 1/L$.
    For $\alpha \in (0, \alpha_{\max}]$, consider the maps $g_\alpha$ and $u_\alpha$ as in Lemma~\ref{lemma:pp-inverse-gd-minus-f}.
    From that lemma, $g_{\alpha}$ is a diffeomorphism with inverse $u_{\alpha}$, and $\D g_{\alpha}(x)$ satisfies Eq.~\eqref{eq:D_g_alpha_pp}.
    This allows us to bound its operator norm (induced by the Euclidean norm $\| \cdot \|_2$) as
    \begin{align} \label{eq:pp-Dg-bound}
        \| \D g_\alpha(x) \| = 
        \frac{1}{\sigma_{\min} (I + \alpha \nabla^2 f(g_\alpha(x)))} \leq 
        \frac{1}{1 - \alpha L} \leq 
        \frac{1}{1 - \alpha_{\max} L} \eqqcolon \rho.
    \end{align}    
    Define the $C^1$ function $\xi(z) = \nabla f(z) - \nabla^2 f(0)[z]$.
    Plugging this in Eq.~\eqref{eq:pp-optim}, we obtain
    \begin{align*}
        x = 
        \alpha \Big( \nabla^2 f(0) [g_{\alpha}(x)] + \xi (g_{\alpha}(x)) \Big) + g_{\alpha}(x) =
        \Big(I + \alpha \nabla^2 f(0)\Big) g_{\alpha}(x) + \alpha \xi (g_{\alpha}(x)).
    \end{align*}
    Notice $u_\alpha(0) = 0$ hence $g_\alpha(0) = 0$.
    Thus, we get from Eq.~\eqref{eq:D_g_alpha_pp} that $\D g_\alpha(0) = \big(I + \alpha \nabla^2 f(0)\big)^{-1}$.
    Apply $\D g_\alpha(0)$ to both sides of the previous equation to obtain
    \begin{align*}
        g_\alpha(x) -  \D g_{\alpha}(0)[x] = - \alpha \D g_{\alpha}(0) [\xi (g_{\alpha}(x))].
    \end{align*}
    Therefore, for all $r > 0$, 
    \begin{align}
        \Lip \big( (g_{\alpha} - \D g_{\alpha}(0) )\vert_{B_r(0)} \big) \leq
        \alpha \; \| \D g_{\alpha} (0) \| \; \Lip \big( (\xi \circ g_{\alpha})\vert_{B_r(0)} \big) \leq 
        \alpha \rho \; \Lip((\xi \circ g_\alpha)\vert_{B_r(0)}).
        \label{eq:lipschitz-decomp}
    \end{align}
    By continuity of $\D \xi(z) = \nabla^2 f(z) - \nabla^2 f(0)$, for any $\delta > 0$, we can pick $R > 0$ such that for all $z$ with $\| z \| \leq R$, it holds that $\| \D \xi(z) \| \leq \delta / \rho$.
    Since $g_\alpha$ is $\rho$-Lipschitz (by Eq.~\eqref{eq:pp-Dg-bound}) and $g_\alpha(0) = 0$, we find $ \| g_\alpha(x) \| \leq \rho \| x \| $.
    Let $r = R / \rho$.
    If $ \| x \| \leq r $, then $\| g_\alpha(x) \| \leq R$, so
    \begin{align*}
        \Lip \big( (\xi \circ g_\alpha)\vert_{B_r(0)} \big) \leq
        \max_{\| x \| \leq r} \| \D \xi(g_\alpha(x)) \circ \D g_\alpha(x) \| \leq
        \max_{\| z \| \leq R} \| \D \xi(z) \| \; \max_{\| x \| \leq r} \| \D g_\alpha(x) \| \leq
        \delta.
    \end{align*}
    Going back to Eq.~\eqref{eq:lipschitz-decomp}, this yields $\Lip \big( (g_{\alpha} - \D g_{\alpha}(0) )\vert_{B_r(0)} \big) \leq \alpha \rho \delta $.
    Accordingly, given $c > 0$, we can pick $\delta = c / \rho$ and the associated $r > 0$ to conclude.
\end{proof}

The next lemma confirms that strict saddles of $f$ are NPH unstable fixed points of~\eqref{eq:pp-update}.

\begin{lemma} \label{lemma:saddles-unstable-pp}
    Let $f \colon \reals^d \to \reals$ be $C^2$ with $L$-Lipschitz continuous gradient and let $(\alpha_k)_{k \geq 0}$ be a sequence of step sizes in $(0, 1/L)$.
    If $\alpha_{\max} \coloneqq \sup_{k \geq 0} \alpha_k < 1/L$ and $\sum_{k=0}^\infty \alpha_k = \infty$,
    then the strict saddle points of $f$ are NPH unstable fixed point of~\eqref{eq:pp-update}.
\end{lemma}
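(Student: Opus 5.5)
We follow the template of Lemma~\ref{lemma:csmt-sequence-of-gradient-maps}. Let $x^\ast$ be a strict saddle; shift so that $x^\ast = 0$. Since $\nabla f(0)=0$, we have $u_{\alpha_k}(0)=0$, hence $g_k(0)=0$ for all $k$, which gives~\aref{assumption:fixed-origin}. Set $T_k \coloneqq \D g_k(0) = (I + \alpha_k \nabla^2 f(0))^{-1}$ by Lemma~\ref{lemma:pp-inverse-gd-minus-f}. Let $h_1,\ldots,h_d$ be the eigenvalues of $\nabla^2 f(0)$ with orthonormal eigenvectors $v_1,\ldots,v_d$, and note $|h_i|\leq L$. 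The eigenvalues of $T_k$ are $1/(1+\alpha_k h_i)$, which are well defined and positive because $\alpha_k h_i \geq -\alpha_{\max} L > -1$. Take $\Ics = \{i : h_i \geq 0\}$ and $\Iu = \{j : h_j < 0\}$. Then $\Eu \neq \{0\}$ because $x^\ast$ is a strict saddle, and both spans are invariant under every $T_k$, so~\aref{assumption:invariance} holds with a splitting independent of $k$ (here $K=0$). We use Euclidean norms on $\Ecs,\Eu$ and the max-norm~\eqref{eq:Rd-norm}.

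For~\aref{assumption:pseudo-hyperbolic}: on $\Ecs$ we have $0 < 1/(1+\alpha_k h_i) \leq 1$, so $\lambda_k = 1$. On $\Eu$ let $h_- \coloneqq \min_{j\in\Iu}|h_j| > 0$. Then $1/(1-\alpha_k|h_j|) \geq 1/(1-\alpha_k h_-) \geq 1 + \alpha_k h_-$, so we take $\mu_k \coloneqq 1 + c\alpha_k$ with $c \coloneqq h_-$. Set $\varepsilon_k \coloneqq c\alpha_k/5 < (\mu_k-\lambda_k)/4$, exactly as for GD. Non-summability of $\varepsilon_k/(\mu_k-2\varepsilon_k) = \frac{c\alpha_k/5}{1+3c\alpha_k/5}$ follows from $\sum\alpha_k=\infty$ via Lemma~\ref{lemma:summability}. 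Indeed, boundedness $\alpha_k\leq \alpha_{\max}$ already makes the denominator uniformly bounded, so this step is direct.

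The step requiring care is the small-Lipschitz condition with constant $\varepsilon_k/4 = c\alpha_k/20$ on a ball $B_r(0)$ common to all $k$. This is exactly what Lemma~\ref{lemma:local-lipschitz-control-PP} provides: apply it with constant $c/20$ and the given $\alpha_{\max}<1/L$. It yields a single $r>0$ with $\Lip((g_k - T_k)\vert_{B_r(0)}) \leq c\alpha_k/20$ for all $k$. That lemma is stated for the Euclidean norm, whereas here we use the max-norm built from Euclidean norms on orthogonal eigenspaces. These norms are equivalent with constants independent of $k$; in fact, for an orthogonal splitting, $\|x\|_{\max}\leq\|x\|_2\leq\sqrt2\|x\|_{\max}$. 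Hence we apply the lemma with $c$ shrunk by a factor $\sqrt2$ and adjust $r$ accordingly, as in the parenthetical of Lemma~\ref{lemma:local-lipschitz-control-GD}.

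Together these give $(g_k, T_k)\in\mathrm{PH}(\mu_k,\lambda_k,\varepsilon_k/4;B_r(0),\Ecs\oplus\Eu)$ for all $k\geq0$, with $\dim\Eu\geq1$, $\varepsilon_k<(\mu_k-\lambda_k)/4$, and the non-summability condition. So $x^\ast$ is an NPH unstable fixed point per Definition~\ref{def:unstable-fixed-point}.
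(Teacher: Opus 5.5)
Your proof is correct and follows the paper's argument. You use the same eigenbasis splitting of $(I+\alpha_k\nabla^2 f(0))^{-1}$, take $\lambda_k = 1$ and $\varepsilon_k$ proportional to $\alpha_k$, and invoke Lemma~\ref{lemma:local-lipschitz-control-PP} for the uniform small-Lipschitz control. The differences are minor. You take the lower bound $\mu_k = 1 + c\alpha_k$ rather than the exact value $1/(1+\alpha_k h_{s+1})$, which lets you reuse Lemma~\ref{lemma:summability} instead of the paper's explicit $t_k$ computation. You also spell out the Euclidean-versus-max-norm conversion that the paper leaves implicit.
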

\begin{proof}
Let $x^\ast$ be a strict saddle point of $f$.
Assume without loss of generality that $x^\ast = 0$.
Let us check the conditions in Definition~\ref{def:unstable-fixed-point} for the sequence of maps $(g_k)_{k \geq 0}$ and the associated linear maps $T_k = \D g_k(0)$.

Recall $g_k(0) = 0$ for all $k$, hence \aref{assumption:fixed-origin} holds.
Denote the eigenvalues of $\nabla^2 f(0)$ by $h_1 \geq \cdots \geq h_d$, with associated eigenvectors $v_1, \ldots, v_d$.
From Eq.~\eqref{eq:D_g_alpha_pp}, we know $\D g_k(0) = (I + \alpha_k \nabla^2 f(0))^{-1}$ has eigenvalues $1/(1+\alpha_k h_1) \leq \cdots \leq 1/(1+\alpha_k h_d)$ with the same eigenvectors.

Let $s$ be the index such that $h_1, \ldots, h_s \in [0, L]$ and $h_{s+1}, \ldots, h_d \in [-L, 0)$.
Accordingly, $1/(1+\alpha_k h_i)$ is in $(1/2, 1]$ for $i \leq s$ and it is strictly greater than $1$ for $i > s$.
Let $\Ecs = \mathrm{span}\{ v_1, \ldots, v_s \}$ and $\Eu = \mathrm{span}\{ v_{s+1}, \ldots, v_d \}$.
This guarantees invariance as per \aref{assumption:invariance}.
Endow $\Ecs$ and $\Eu$ with the Euclidean norm $\| \cdot \|_2$.
We choose constants $\lambda_k \coloneqq 1$ and $\mu_k \coloneqq 1 / (1 + \alpha_k h_{s+1}) > 1$ for all $k \geq 0$, so~\aref{assumption:pseudo-hyperbolic} is satisfied.

Next, observe that for all $k \geq 0$ we have
\begin{equation*}
    \mu_k - \lambda_k = \frac{- \alpha_k h_{s+1}}{1 + \alpha_k h_{s+1}} > - \alpha_k h_{s+1} > 0.
\end{equation*}
Let $\varepsilon_k \coloneqq \frac{-h_{s+1}}{5} \alpha_k > 0$.
This ensures $\varepsilon_k < (\mu_k - \lambda_k) / 4$ as required by Definition~\ref{def:unstable-fixed-point}.
To complete \aref{assumption:small-lipschitz}, invoke Lemma~\ref{lemma:local-lipschitz-control-PP}: there exists $r > 0$ such that $\Lip( (g_k - T_k)\vert_{B_r(0)}) \leq \varepsilon_k / 4$
for all $k \geq 0$.

It only remains to check the non-summability of $\varepsilon_k/(\mu_k - 2\varepsilon_k)$.
To this end, let $\beta \coloneqq -h_{s+1} \in (0, L]$ and $t_k \coloneqq \beta \alpha_k \in (0, t_{\max}]$, where $t_{\max} \coloneqq \beta \alpha_{\max} \in (0, 1)$.
Since 
\begin{align*}
    \frac{\varepsilon_k}{\mu_k - 2 \varepsilon_k} = 
    \frac{t_k (1 - t_k)}{5 - 2 t_k (1 - t_k)} \geq
    \frac{t_k (1 - t_{\max})}{5} = 
    \frac{\beta (1 - t_{\max})}{5} \alpha_k,
\end{align*} 
the conclusion follows immediately from the non-summability of $\alpha_k$.
\end{proof}

\section{Perspectives}

We conclude by listing a few questions.

\begin{itemize}
    \item \textbf{Broader algorithmic scope.}
    Our approach views algorithms that apply a different update at each time step as non-autonomous dynamical systems, and it relies on a shared form of pseudo-hyperbolicity near saddle points.
    For which other algorithms could we apply the same technique to show avoidance of saddle points?

    \item \textbf{Non-summability.}
    Theorem~\ref{theorem:local-csmt-generic} requires the series $\sum_k \varepsilon_k / (\mu_k - 2\varepsilon_k)$ to be non-summable.
    This condition is more permissive than the uniform contraction/expansion requirements imposed in existing non-autonomous versions of the CSMT such as the one by \citet[Thm.~6.2.8]{katok1995introduction}: we needed this additional leeway to cover vanishing step-size regimes.
    For \eqref{eq:GD}, \eqref{eq:RGD} and \eqref{eq:pp-update}, we showed that the condition is satisfied as soon as the step sizes themselves are non-summable, which is rather natural for optimization methods.
    Still, it is natural to ask: could we construct a center-stable set with weaker (or no) non-summability assumption?
\end{itemize}

\section{Funding and Conflicts of interests}

This work was supported by the Swiss State Secretariat for Education, Research and Innovation (SERI) under contract number MB22.00027.
The authors declare no conflicts of interest.

\appendix

\section{Technical lemmas}

\begin{lemma}[Uniform contraction principle] \label{lemma:uniform-contraction}
    Let $Y, Z$ be two non-empty metric spaces and assume $Z$ is complete.
    Let $h \colon Y \times Z \to Z$ satisfy the following for some $0 \leq \ell < 1$ and $L \geq 0$ 
    uniformly for all $y_1, y_2 \in Y$ and $z_1, z_2 \in Z$:
    \begin{align*}
        \dist( h(y_1, z_1), h(y_1, z_2) ) &\leq \ell \; \dist(z_1, z_2), \textrm{ and } \\ 
        \dist( h(y_1, z_1), h(y_2, z_1) ) &\leq L \; \dist(y_1, y_2).
    \end{align*}
    Define $\xi \colon Y \to Z$ such that $\xi(y)$ is the unique fixed point of the contraction 
    $h(y, \cdot) \colon Z \to Z$.
    Then, $\xi$ is well defined and it is $L / (1 - \ell)$-Lipschitz continuous.
\end{lemma}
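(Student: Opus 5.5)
The plan is to prove the uniform contraction principle in two stages. First we show that $\xi$ is well defined, then we show it is Lipschitz with constant $L/(1-\ell)$.

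\emph{Well-definedness.} Fix $y \in Y$. By the first hypothesis, $h(y,\cdot)\colon Z \to Z$ is $\ell$-Lipschitz with $\ell < 1$, so it is a contraction. Since $Z$ is complete and non-empty, the Banach fixed-point theorem gives a unique fixed point, which we call $\xi(y)$. This defines $\xi\colon Y \to Z$ unambiguously.

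\emph{Lipschitz bound.} Take $y_1, y_2 \in Y$ and write $z_i \coloneqq \xi(y_i)$, so that $z_i = h(y_i, z_i)$. We insert the intermediate point $h(y_1, z_2)$ and apply the triangle inequality:
\begin{align*}
    \dist(z_1, z_2) &= \dist\big(h(y_1,z_1), h(y_2,z_2)\big) \\
    &\leq \dist\big(h(y_1,z_1), h(y_1,z_2)\big) + \dist\big(h(y_1,z_2), h(y_2,z_2)\big) \\
    &\leq \ell\, \dist(z_1,z_2) + L\, \dist(y_1,y_2).
\end{align*}
The first term is bounded by the contraction hypothesis in $z$ at the fixed $y_1$. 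The second is bounded by the Lipschitz hypothesis in $y$ at the fixed $z_2$; this hypothesis holds uniformly in $z$, which is exactly what is needed here.

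Since $\dist(z_1,z_2)$ is finite and $1-\ell>0$, we can rearrange to get $(1-\ell)\,\dist(z_1,z_2) \leq L\,\dist(y_1,y_2)$, which is the claimed $L/(1-\ell)$ bound.

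There is no real obstacle in this argument. The only point that needs care is choosing the intermediate point so that each hypothesis is applied with the other variable held fixed.
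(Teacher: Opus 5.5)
Your proposal is correct and follows essentially the same argument as the paper: the Banach fixed-point theorem gives well-definedness, and the triangle inequality through the intermediate point $h(y_1, \xi(y_2))$ followed by rearrangement gives the $L/(1-\ell)$ bound.
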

\begin{proof}
    By the contraction mapping principle, $\xi$ is well defined and satisfies $h(y, \xi(y)) = \xi(y)$ for all $y \in Y$.
    To show that it is Lipschitz continuous, observe that for any $y_1, y_2 \in Y$ we have 
    \begin{align*}
        \dist(\xi(y_1), \xi(y_2)) &= 
        \dist(h(y_1, \xi(y_1)), h(y_2, \xi(y_2))) \\ &\leq 
        \dist(h(y_1, \xi(y_1)), h(y_1, \xi(y_2))) + \dist(h(y_1, \xi(y_2)), h(y_2, \xi(y_2))) \\ &\leq 
        \ell \dist(\xi(y_1), \xi(y_2)) + L \dist(y_1, y_2).
    \end{align*}
    We obtain the conclusion by rearranging.
\end{proof}

\begin{lemma} [Local to global Lipschitz control] \label{lemma:local-to-global-small-lipschitz}
    Fix any norm $\| \cdot \|$ on $\reals^d$.
    Let $\big( g_k \colon \reals^d \to \reals^d \big)_{k \geq 0}$ be a sequence of maps with $g_k(0) = 0$ and $( T_k \colon \reals^d \to \reals^d )_{k \geq 0}$ be a sequence of linear maps.
    Suppose there exist $r > 0$ and positive $(\varepsilon_k)_{k \geq 0}$ such that 
    $\Lip\big( (g_k - T_k) \vert_{B_r(0)} \big) \leq \varepsilon_k / 4$ for all $k \geq 0$.
    Then, there exists a sequence of maps $( \tilde{g}_k \colon \reals^d \to \reals^d )_{k \geq 0}$ such that 
    \begin{align*}
        \tilde{g}_k = g_k \textrm{ on } B_{r/2}(0) 
        \quad\quad \textrm{ and } \quad\quad 
        \Lip \big( \tilde{g}_k - T_k \big) \leq \varepsilon_k.
    \end{align*}
\end{lemma}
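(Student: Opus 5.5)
The plan is a standard cut-off construction applied to the nonlinear remainder only. Write $h_k \coloneqq g_k - T_k$, so that $h_k(0) = 0$ (since $g_k(0)=0$ and $T_k$ is linear) and $\Lip(h_k\vert_{B_r(0)}) \leq \varepsilon_k/4$. We will define
\begin{align*}
    \tilde g_k(x) \coloneqq T_k x + \rho(x)\, h_k(x),
\end{align*}
where $\rho \colon \reals^d \to [0,1]$ is a single cut-off function, independent of $k$. We require $\rho \equiv 1$ on $B_{r/2}(0)$, $\rho \equiv 0$ outside $B_r(0)$, and $\Lip(\rho) \leq 2/r$ with respect to the given norm. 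A concrete choice that avoids smoothness issues for general norms is $\rho(x) = \min\{1, \max\{0, 2 - 2\|x\|/r\}\}$, which is Lipschitz with constant $2/r$. Then $\tilde g_k = g_k$ on $B_{r/2}(0)$ holds by construction, and $\tilde g_k - T_k = \rho h_k$. It therefore remains to bound $\Lip(\rho h_k) \leq \varepsilon_k$ on all of $\reals^d$.

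First, since $h_k(0)=0$, on the ball we have $\|h_k(x)\| \leq (\varepsilon_k/4)\|x\| \leq (\varepsilon_k/4) r$; by continuity this extends to the closed ball. Next, take $x,y$ and bound $\|\rho(x)h_k(x) - \rho(y)h_k(y)\|$ in cases.
\begin{itemize}
    \item If both points lie outside $B_r(0)$, the difference is $0$.
    \item If both lie in $B_r(0)$, write
    \begin{align*}
        \rho(x)h_k(x) - \rho(y)h_k(y) = \rho(x)\bigl(h_k(x)-h_k(y)\bigr) + \bigl(\rho(x)-\rho(y)\bigr)h_k(y).
    \end{align*}
    The first term is at most $(\varepsilon_k/4)\|x-y\|$. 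The second is at most $(2/r)\|x-y\| \cdot (\varepsilon_k/4) r = (\varepsilon_k/2)\|x-y\|$. The total is $\tfrac34 \varepsilon_k \|x-y\| \leq \varepsilon_k\|x-y\|$.
    \item If $x \in B_r(0)$ and $y \notin B_r(0)$, the segment from $x$ to $y$ meets the sphere $\|z\|=r$ at some point $z$ with $\|x-z\|+\|z-y\| = \|x-y\|$. Then $\rho h_k$ vanishes at $z$ (since $\rho(z)=0$) and at $y$. Applying the previous estimate to the pair $x, z$, using the closure of the ball and continuity of $h_k$ there, gives $\|\rho(x)h_k(x)\| \leq \tfrac34\varepsilon_k\|x-z\| \leq \varepsilon_k\|x-y\|$.
\end{itemize}

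The main obstacle is this mixed case, together with the requirement that $h_k$ be continuous up to the boundary of the ball. The Lipschitz bound on the open ball makes $h_k$ uniformly continuous there, so it extends to the closure with the same constant. Hence the bound holds at the boundary point $z$; alternatively, one can shrink to radius $r' < r$ and take $\rho$ with support in $B_{r'}(0)$. The key design choice throughout is that $\rho$ and $r$ are common to all $k$, which makes the constant $\varepsilon_k$ scale correctly for every $k$ simultaneously. The factor $1/4$ in the hypothesis leaves exactly the slack needed to absorb the $2/r$ gradient of the cut-off.
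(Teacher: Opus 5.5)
Your proof is correct and follows the paper's construction: the same cut-off $\rho(x)=\min\{1,\max\{0,2-2\|x\|/r\}\}$ common to all $k$, the same $\tilde g_k = T_k x + \rho\, h_k$, and the same $\tfrac34\varepsilon_k$ bound when both points lie in $B_r(0)$. The only difference is the mixed case, where the paper needs neither your boundary point $z$ nor the closure/continuity discussion; it simply uses $\rho(y)=0$ to get $\|\rho(x)h_k(x)-\rho(y)h_k(y)\| = |\rho(x)-\rho(y)|\,\|h_k(x)\| \le \tfrac{2}{r}\|x-y\|\cdot\tfrac{\varepsilon_k}{4}r = \tfrac12\varepsilon_k\|x-y\|$. (Your closure argument is really about the continuous extension of $h_k$ from the open ball, not its actual values on the sphere, since $g_k$ is not assumed continuous there; it works only because $\rho(z)=0$ makes the value at $z$ irrelevant.)
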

\begin{proof}
    For each $k \geq 0$, set $u_k = g_k - T_k$.
    Notice $u_k(0) = 0$.
    Define the bump function
    \begin{align*}
        q(x) \coloneqq 
        \min\!\Big\{ 1, \max\!\big\{ 0, 2 - \frac{2}{r} \| x \| \big\} \Big\} \in [0, 1],
    \end{align*}
    and observe that $q(x) = 1$ for all $x \in B_{r/2}(0)$ and $q(x) = 0$ for all $x \in \reals^d \setminus B_r(0)$.
    Moreover, $q$ is Lipschitz continuous with $\Lip(q) \leq 2/r$.
    For each $k \geq 0$, define $\tilde{g}_k \colon \reals^d \to \reals^d$ as
    \begin{align*}
        \tilde{g}_k(x) \coloneqq
        T_k \, x + q(x) u_k(x).
    \end{align*}
    Since $q \equiv 1$ on $B_{r/2}(0)$, this satisfies $\tilde{g}_k = g_k$ on $B_{r/2}(0)$.
    It remains to bound $\Lip(q u_k)$.

    If $x, y \in B_r(0)$, using $u_k(0) = 0$, $q(x) \in [0, 1]$, $\Lip(u_k\vert_{B_r(0)}) \leq \varepsilon_k / 4$ and $\Lip(q) \leq 2/r$, 
    \begin{align}
        \| u_k(x) q(x) - u_k(y) q(y) \|
        & \leq \|  u_k(x) \| \; \left| q(x) - q(y) \right| + \left| q(y) \right| \; \| u_k(x) - u_k(y) \| \label{eq:Lipuqintermediate} \\ 
        & \leq \Lip \big(u_k \vert_{B_r(0)} \big) \; \| x \| \; \Lip(q) \; \| x - y \| + \Lip \big(u_k \vert_{B_r(0)} \big) \; \| x - y \| \nonumber\\
        & \leq \frac{3}{4} \varepsilon_k \; \| x - y \|. \nonumber
    \end{align}
    Otherwise, if $x \in B_r(0)$ and $y \in \reals^d \setminus B_r(0)$, resume from~\eqref{eq:Lipuqintermediate} and use $q(y) = 0$ to get
    \begin{align*}
        \| u_k(x) q(x) - u_k(y) q(y) \|
        & \leq \|  u_k(x) \| \; \left| q(x) - q(y) \right| \\
        & \leq \Lip \big(u_k \vert_{B_r(0)} \big) \; \| x \| \; \Lip(q) \; \| x - y \|
        \leq \frac{1}{2} \varepsilon_k \; \| x - y \|.
    \end{align*}
    Finally, if both $x$ and $y$ are outside $B_r(0)$, then $\| u_k(x) q(x) - u_k(y) q(y) \| = 0$.
    Therefore, we showed that $\Lip(\tilde{g}_k - T_k) \leq \frac{3}{4}\varepsilon_k$ for all $k \geq 0$.
\end{proof}

\begin{lemma} \label{lemma:summability}
    Let $( \alpha_k )_{k \geq 0}$ be a positive sequence with $\sum_{k \geq 0} \alpha_k = \infty$ and let $Q > 0$.
    Then, $\sum_{k \geq 0} \alpha_k / (1 + \alpha_k Q) = \infty$.
\end{lemma}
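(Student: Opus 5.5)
The plan is a case split on whether the step sizes are bounded away from zero along the tail, or equivalently a direct comparison of terms. Set $b_k \coloneqq \alpha_k / (1 + \alpha_k Q)$, which is positive. The map $t \mapsto t/(1+tQ)$ is increasing on $(0,\infty)$ and bounded above by $1/Q$. Two regimes matter: when $\alpha_k$ is small, $b_k$ is comparable to $\alpha_k$, and when $\alpha_k$ is large, $b_k$ is comparable to the constant $1/Q$.

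\textbf{Case 1:} $\alpha_k \geq 1/Q$ for infinitely many $k$. For each such $k$, monotonicity gives
\begin{align*}
    b_k \geq \frac{1/Q}{1 + 1} = \frac{1}{2Q}.
\end{align*}
Infinitely many terms are then bounded below by a fixed positive constant. Since all terms are positive, the series diverges.

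\textbf{Case 2:} $\alpha_k < 1/Q$ for all $k \geq K$, for some index $K$. For these $k$ we have $1 + \alpha_k Q < 2$, so $b_k > \alpha_k / 2$. The tail $\sum_{k \geq K} \alpha_k$ diverges because the full series does and only finitely many terms are removed. By comparison, $\sum_{k \geq K} b_k = \infty$, and hence $\sum_{k\ge0} b_k=\infty$.

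A single line also covers both cases at once: $b_k \geq \tfrac12 \min(\alpha_k, 1/Q)$. One then argues that $\sum_k \min(\alpha_k, 1/Q) = \infty$, which follows because either the minimum equals $1/Q$ infinitely often or it eventually equals $\alpha_k$.

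There is no real obstacle here. The only point needing care is to avoid assuming $\alpha_k \to 0$, since the sequence may be unbounded, as for arbitrarily large initial GD steps. The case split above handles that possibility.
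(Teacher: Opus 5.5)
Your proof is correct and rests on the same two bounds as the paper: $b_k \ge \alpha_k/2$ when $\alpha_k Q \le 1$, and $b_k \ge 1/(2Q)$ when $\alpha_k Q \ge 1$. Your case split is the sound one. The paper splits on whether infinitely many indices satisfy $\alpha_k Q \le 1$, and then asserts $\sum_{k \in \mathcal{I}} \alpha_k = \infty$ over that infinite subfamily. That need not hold, for instance when small summable terms are interleaved with large ones. Your split into ``infinitely many $\alpha_k \ge 1/Q$'' versus ``eventually $\alpha_k < 1/Q$'' lets you appeal to divergence of a tail of $\sum_k \alpha_k$, which is justified.
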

\begin{proof}
    We study two cases, depending on whether there are finitely or infinitely many indices $k$ such that $\alpha_k Q \leq 1$.
    Assume first that there is an infinite family $\mathcal{I}$ of such indices.
    Then
    \begin{align*}
        \sum_{k = 0}^\infty \frac{\alpha_k}{1 + \alpha_k Q} \geq 
        \sum_{k \in \mathcal{I}} \frac{\alpha_k}{1 + \alpha_k Q} \geq 
        \frac{1}{2} \sum_{k \in \mathcal{I}} \alpha_k = \infty.
    \end{align*}    
    In the second case, assume we have an infinite family $\mathcal{I}$ of indices $k$ such that $\alpha_k Q \geq 1$.
    Then
    \begin{align*}
        \sum_{k = 0}^\infty \frac{\alpha_k}{1 + \alpha_k Q} \geq 
        \sum_{k \in \mathcal{I}} \frac{\alpha_k}{1 + \alpha_k Q} \geq 
        \sum_{k \in \mathcal{I}} \frac{1}{2Q} = \infty,
    \end{align*}
    so indeed the sequence is non-summable in both cases.
\end{proof}

\begin{lemma} \label{lemma:uniform-alpha-control}
    Let $h \colon \reals \times \Rd \to \Rn$ be $C^1$ in its first argument.
    Assume
    \begin{itemize}
        \item For every $\alpha \in \reals$, $h(\alpha, 0) = 0$,
        \item There exists a neighborhood $V \subset \Rd$ of $0$ such that $h(0, v) = 0$ for all $v \in V$, and
        \item The partial derivative with respect to the first variable, $\partial_{\alpha} h$, is continuous.
    \end{itemize}
    Then, for all $\bar{\alpha} > 0$ and $c > 0$, there exists $r > 0$ such that
    \begin{align*}
        \sup_{\| v \| \leq r } \| h(\alpha, v) \| \leq c \alpha
        \quad \textrm{ for all } \alpha \in [0, \bar{\alpha}].
    \end{align*}
    (This holds for all norms on $\Rd$ and $\Rn$, only affecting the value of $r$.)
\end{lemma}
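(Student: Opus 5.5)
The plan is to bound $h(\alpha, v)$ by integrating its $\alpha$-derivative from $0$, using $h(0,v) = 0$ on $V$. For $v \in V$ and $\alpha \geq 0$, the fundamental theorem of calculus in the first argument gives
\begin{align*}
    h(\alpha, v) = h(0, v) + \int_0^\alpha \partial_\alpha h(s, v)\, \mathrm{d}s = \int_0^\alpha \partial_\alpha h(s, v)\, \mathrm{d}s,
\end{align*}
and therefore $\| h(\alpha, v) \| \leq \alpha \sup_{s \in [0, \alpha]} \| \partial_\alpha h(s, v) \|$. It then suffices to find $r > 0$ with $\overline{B_r(0)} \subseteq V$ and
\begin{align*}
    \sup_{s \in [0, \bar{\alpha}],\ \|v\| \leq r} \| \partial_\alpha h(s, v) \| \leq c.
\end{align*}

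Next, I would use the first hypothesis: since $h(\alpha, 0) = 0$ for every $\alpha$, differentiating in $\alpha$ gives $\partial_\alpha h(s, 0) = 0$ for all $s$. The continuous function $(s, v) \mapsto \| \partial_\alpha h(s, v) \|$ therefore vanishes on the compact segment $[0, \bar{\alpha}] \times \{0\}$.

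The main step is making the smallness uniform in $s \in [0, \bar\alpha]$. I would do this with a standard compactness argument (the tube lemma). For each $s$, continuity gives an open neighborhood $I_s \times B_{r_s}(0)$ on which $\| \partial_\alpha h \| < c$. Finitely many of the $I_s$ cover $[0, \bar{\alpha}]$, and taking $r$ to be the minimum of the corresponding $r_s$ (shrunk further so that $\overline{B_r(0)} \subseteq V$) yields the required uniform bound. An alternative is to argue by contradiction: sequences $(s_n, v_n)$ with $v_n \to 0$ and $\| \partial_\alpha h(s_n, v_n) \| > c$ have a subsequence with $s_n \to s$, which contradicts continuity at $(s, 0)$.

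Combining the two parts gives $\| h(\alpha, v) \| \leq c \alpha$ for $\|v\| \leq r$ and $\alpha \in [0, \bar\alpha]$. The integral representation is legitimate because $h(\cdot, v)$ is $C^1$. Since all norms on finite-dimensional spaces are equivalent, changing norms only rescales $c$ and $r$. In the application to \eqref{eq:rgd-small-lipschitz}, the codomain is the space of linear maps, identified with $\Rn$.
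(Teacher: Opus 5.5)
Your proof is correct and is essentially the paper's argument: the same integral representation using $h(0,v)=0$ on $V$, the same observation that $\partial_\alpha h(\cdot,0)\equiv 0$, and a compactness argument for uniformity over $[0,\bar\alpha]$, which you phrase via the tube lemma while the paper invokes Berge's Maximum Theorem to get continuity at $r=0$ of $\omega(r)=\sup_{\alpha\in[0,\bar\alpha],\,\|v\|\le r}\|\partial_\alpha h(\alpha,v)\|$. The only detail to tidy is to take $r$ strictly smaller than $\min_s r_s$, so that the closed ball $\|v\|\le r$ lies inside the open balls $B_{r_s}(0)$ on which your bound $\|\partial_\alpha h\|<c$ holds.
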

\begin{proof}
    Fix $\bar\alpha>0$ and $c>0$.
    Because $h(\alpha,0)=0$ for all $\alpha$, we have
    \[
    \partial_\alpha h(\alpha,0)=0
    \quad\text{for all }\alpha.
    \]
    This implies that the function $\omega \colon [0, \infty) \to \reals$ defined below satisfies $\omega(0) = 0$:
    \begin{align*}
        \omega(r) \coloneqq \sup_{\alpha \in [0, \bar\alpha], \|v\| \leq r} \|\partial_\alpha h(\alpha,v)\|.
    \end{align*}
    As in Lemma~\ref{lemma:local-lipschitz-control-GD},
    notice that $\omega$ is continuous owing to the Maximum Theorem and continuity of $(\alpha, v) \mapsto \partial_\alpha h(\alpha, v)$~\citep[p.~116]{berge1963topological}.
    Therefore, we can find $r > 0$ such that $\omega(r) \leq c$.
    If need be, reduce $r$ such that $B_r(0)$ is contained in $V$.
    For $\alpha \in [0, \bar{\alpha}]$ and $\|v\| \leq r$, using $h(0, v) = 0$, we find
    \[
    \|h(\alpha,v)\| = \left\| h(0, v) + \int_0^\alpha \partial_\alpha h(\tau,v)\,\mathrm{d}\tau \right\|
    \le \int_0^\alpha \|\partial_\alpha h(\tau,v)\|\,\mathrm{d}\tau
    \le \omega(r) \alpha \leq c\alpha.
    \]
    Taking the supremum over $\|v\|\le r$ completes the proof.
\end{proof}

\section{Proofs about the graph transform} \label{appendix:graph-transform-proofs}

This appendix holds proofs for some standard lemmas about the classical graph transform, as stated in Section~\ref{section:graph-transform-autonomous}.
They are extracted from the book by~\citet[Thm.~5.1, p56]{hirsch1977invariant}, where the authors attribute the core ideas to \citet{hadamard1901iteration}.
The lemmas and proofs as laid out below appeared earlier in a blog post by the authors.\footnote{\href{https://www.racetothebottom.xyz/posts/saddle-avoidance-general/}{racetothebottom.xyz/posts/saddle-avoidance-general/}}

Observe that $\pcs$ and $\pu$ (as defined in the introduction of Section~\ref{section:graph-transform-autonomous}) are $1$-Lipschitz with respect to the max-norm in Eq.~\eqref{eq:Rd-norm}.\footnote{$\| \pu(x) - \pu(x') \| = \| \pu(x - x')\| \leq \max(\| \pu(x - x')\|, \| \pcs(x - x') \|) = \| x - x' \|_{\reals^d} $ for any $x, x' \in \reals^d$.}
This property is used repeatedly below.

\subsection{Proof of Lemma~\ref{lemma:h_y_phi_lipschitz}} \label{sec:proof_lemma:h_y_phi_lipschitz}
\begin{proof} 
    Fix an arbitrary $y \in \Ecs$.
    For any $z, z' \in \Eu$, we have 
    \begin{align} \label{eq:h_y_varphi_contraction_bound}
        \mu \| \hyvarphi(z) - \hyvarphi(z') \| 
        & \leq \| \TEu \big( \hyvarphi(z) - \hyvarphi(z') \big) \| \notag \\
        & \leq \big\| \varphi(\gcs(y, z)) - (\gu - \Tu)(y, z) - \varphi(\gcs(y, z')) + (\gu - \Tu)(y, z') \big\| \notag \\
        & \leq \big\| \varphi(\gcs(y, z)) - \varphi(\gcs(y, z'))  \big\| + \big\|  (\gu - \Tu)(y, z)  - (\gu - \Tu)(y, z') \big\| \notag \\ 
        & \leq \Lip(\varphi) \, \| \gcs(y, z) - \gcs(y, z') \| + \Lip(\pu \circ (g - T) ) \, \| z - z' \|,
    \end{align}  
    where in the first step we used~\aref{assumption:pseudo-hyperbolic}, and in the second step we used 
    the definition of $\hyvarphi$ from Eq.~\eqref{eq:h-phi-y-definition}.
    For the first term, we split $g = T + (g - T)$
    \begin{align*}
        \| \gcs(y, z) - \gcs(y, z') \| 
        & \leq \| (\gcs-\Tcs)(y, z) - (\gcs-\Tcs)(y, z') \| + \| \Tcs(y, z) - \Tcs(y, z') \| \\
        & \leq \Lip(\pcs \circ (g - T)) \| z - z' \|,
    \end{align*}
    where in the second step we used $\pcs \circ T = T \circ \pcs$ to claim $\Tcs(y, z) = \Tcs(y, z')$.
    Since $\Lip(\varphi) \leq 1$ and $\Lip(g - T) \leq \varepsilon$, we continue from~\eqref{eq:h_y_varphi_contraction_bound} and, as announced, obtain that 
    \begin{align*}
         \mu \| \hyvarphi(z) - \hyvarphi(z') \| &\leq 2 \varepsilon \| z - z' \|.
    \end{align*}

    For the second part of the lemma, fix $z \in \Eu$.
    For any $y, y' \in \Ecs$, we proceed similarly to Eq.~\eqref{eq:h_y_varphi_contraction_bound}, and we obtain
    \begin{align} \label{eq:h_y_varphi_lipschitz_param}
        \mu \| h_y^\varphi(z) - h_{y'}^\varphi(z) \| 
        & \leq \big\| \TEu (h_y^\varphi(z) - h_{y'}^\varphi(z)) \big\| \notag \\ 
        & \leq \big\|  \varphi(\gcs(y, z)) -\varphi(\gcs(y', z)) \big\| + \big\| (\gu - \Tu)(y, z) - (\gu - \Tu)(y', z) \big\| \notag \\ 
        & \leq \Lip(\varphi) \; \| \gcs(y, z) - \gcs(y', z) \| + \Lip(\pu \circ (g - T)) \; \| y - y' \|.
    \end{align}
    For the first term, splitting again $g = (g - T) + T$ and using that $\Lip(\Tcs) \leq \lambda$, we have 
    \begin{align*}
        \| \gcs(y, z) - \gcs(y', z) \| 
        & \leq \| (\gcs - \Tcs) (y, z) - (\gcs - \Tcs)(y', z) \| + \| \Tcs(y, z) - \Tcs(y', z) \| \\ 
        & \leq \Lip(\pcs \circ (g - T)) \; \| y - y' \| + \lambda \| y - y' \| 
    \end{align*}
    Since $\Lip(g - T) \leq \varepsilon$, plugging this back in~\eqref{eq:h_y_varphi_lipschitz_param}, we obtain
    \begin{align*}
        \mu \| h_y^\varphi(z) - h_{y'}^\varphi(z) \| \leq
        (\lambda + 2 \varepsilon) \| y - y' \|,
    \end{align*}
    which shows that $y \mapsto \hyvarphi(z)$ is $(\lambda + 2 \varepsilon) / \mu$ Lipschitz.
\end{proof}

\subsection{Proof of Lemma~\ref{lemma:gamma-lipschitz}} \label{subsec:gamma-lipschitz}
\begin{proof} 
    We first show that $\Gamma$ is well defined.
    Let $(g, T) \in \mathrm{PH}(\mu, \lambda, \varepsilon; \reals^d, \Ecs \oplus \Eu)$.
    From the first part of Lemma~\ref{lemma:h_y_phi_lipschitz}, the map $z \mapsto h_y^\varphi(z)$ is a contraction, so $\Fix(h_y^\varphi)$ exists and is unique.
    Then, to check that $\Gamma \varphi \in \calF_1$, first observe that since $g(0) = 0$ and $\varphi(0) = 0$, then $h_y^\varphi(0) = 0$, so it holds indeed that $(\Gamma \varphi)(0) = 0$.
    From the uniform contraction principle (Lemma~\ref{lemma:uniform-contraction}), we obtain that $\Gamma \varphi$ is $(\lambda + 2 \varepsilon) / (\mu - 2 \varepsilon)$-Lipschitz.
    By~\aref{assumption:small-lipschitz}, this constant is less than $1$, which confirms that $\Gamma \varphi \in \calF_1$.

    We now show that $\Gamma$ is a contraction.
    Let $\varphi_1, \varphi_2 \in \calF_1$ and $y \in \Ecs$ be arbitrary.
    For $i \in \{1, 2\}$, define $z_i \coloneq (\Gamma \varphi_i)(y)$.
    Then, since $(\Gamma \varphi_i)(y) = \Fix(h_y^\varphi)$, it follows that $z_i = h_y^{\varphi_i}(z_i)$, so we have that 
    \begin{align*}
        \| (\Gamma \varphi_1)(y) - (\Gamma \varphi_2)(y) \| = 
        \| z_1 - z_2 \| = 
        \| h_y^{\varphi_1}(z_1) - h_y^{\varphi_2}(z_2) \|.
    \end{align*}
    Since $(T\vert_{\Eu}) h_y^{\varphi_i}(z_i) = \varphi_i(\gcs(y, z_i)) - (\gu - \Tu)(y, z_i)$ and using~\aref{assumption:invariance}, it follows that 
    \begin{align} \label{eq:Gamma-contraction-start}
        \mu \| z_1 - z_2 \| 
        & \leq \| \varphi_1(\gcs(y, z_1)) - \varphi_2(\gcs(y, z_2)) \| 
        + \| (\gu-\Tu)(y, z_1) - (\gu-\Tu)(y, z_2) \|  \notag \\ 
        & \leq  \| \varphi_1(\gcs(y, z_1)) - \varphi_2(\gcs(y, z_2)) \| + 
            \varepsilon \| (y, z_1) - (y, z_2) \|, 
    \end{align}
    where in the last step we used~\aref{assumption:small-lipschitz}.
    Rearranging, we obtain 
    \begin{align}
        (\mu - \varepsilon) \| z_1 - z_2 \| \leq \| \varphi_1(\gcs(y, z_1)) - \varphi_2(\gcs(y, z_2)) \|.
        \label{eq:Gamma-contraction-rearranged}
    \end{align}
    We continue by adding and subtracting $\varphi_1(\gcs(y, z_2))$, which yields
    \begin{multline}
        \| \varphi_1(\gcs(y, z_1)) - \varphi_2(\gcs(y, z_2)) \|
        \leq \| \varphi_1(\gcs(y, z_1)) - \varphi_1(\gcs(y, z_2)) \| \\
        + \| \varphi_1(\gcs(y, z_2)) - \varphi_2(\gcs(y, z_2)) \|.
        \label{eq:diff_phi_bound}
    \end{multline}
    We bound the first term of Eq.~\eqref{eq:diff_phi_bound} as
    \begin{align} \label{eq:same_phi_different_z_bound}
        \| \varphi_1(\gcs(y, z_1)) - \varphi_1(\gcs(y, z_2)) \|
        &\leq \Lip(\varphi_1) \, \| \gcs(y, z_1) - \gcs(g(y, z_2)) \| \notag \\
        &\leq \| (\gcs-\Tcs)(y, z_1) - (\gcs-\Tcs)(y, z_2) \| \notag \\
        &\phantom{\leq}\; + \| \Tcs(y, z_1) - \Tcs(y, z_2) \| \notag \\
        &\leq \Lip(g-T) \, \| (y, z_1) - (y, z_2) \| \notag \\ 
        &\phantom{\leq}\; + \| T(\pcs(y, z_1)) - T(\pcs(y, z_2)) \| \notag \\
        &\leq \varepsilon \, \| z_1 - z_2 \| + 0.
    \end{align}
    For the second term in Eq.~\eqref{eq:diff_phi_bound}, splitting $g = (g-T)+T$ as usual, 
    and using \aref{assumption:fixed-origin}, \aref{assumption:pseudo-hyperbolic} and~\aref{assumption:small-lipschitz}, we have 
    \begin{align*} 
        \| \varphi_1(\gcs(y, z_2)) - \varphi_2(\gcs(y, z_2)) \| 
        &\leq \| \varphi_1 - \varphi_2 \| \; \| \gcs(y, z_2) \| \notag \\ 
        &\leq \| \varphi_1 - \varphi_2 \| \; \Big( \| (\gcs - \Tcs)(y, z_2) - (\gcs-\Tcs)(0, 0) \| \notag \\ 
        &\phantom{\leq} \qquad\qquad\quad\quad  + \| \Tcs(y, z_2) \| \Big) \notag \\
        &\leq \| \varphi_1 - \varphi_2 \| \; \Big( \varepsilon \| (y, z_2) \| + \lambda \| y \| \Big).
    \end{align*}
    Since $\Gamma \varphi_2 \in \calF_1$, it follows that $(\Gamma \varphi_2)(0) = 0$ and $\Lip(\Gamma \varphi_2) \leq 1$, so 
    \begin{align*}
        \| z_2 \| = \| (\Gamma \varphi_2)(y) - (\Gamma \varphi_2)(0) \| \leq \| y \|
    \end{align*}
    Therefore, continuing from above and using the definition of the norm from Eq.~\eqref{eq:Rd-norm}, 
    we obtain 
    \begin{align} \label{eq:different_phi_same_z_bound}
        \| \varphi_1(\pcs(g(y, z_2))) - \varphi_2(\pcs(g(y, z_2))) \| \leq 
        \big( \varepsilon + \lambda \big) \; \| \varphi_1 - \varphi_2 \| \; \| y \|.
    \end{align}
    Plugging the bounds from Eq.~\eqref{eq:same_phi_different_z_bound} and~\eqref{eq:different_phi_same_z_bound} back in Eq.~\eqref{eq:diff_phi_bound}, we obtain
    \begin{align*}
         \| \varphi_1(\pcs(g(y, z_1))) - \varphi_2(\pcs(g(y, z_2))) \|
         &\leq \varepsilon \| z_1 - z_2 \| + \big( \varepsilon + \lambda \big) \; \| \varphi_1 - \varphi_2 \| \; \| y \|. 
    \end{align*}
    Finally, substituting this into Eq.~\eqref{eq:Gamma-contraction-rearranged}, using $z_i = (\Gamma \varphi_i)(y)$ yields
    \begin{align*}
        (\mu - 2 \varepsilon) \; \|  (\Gamma \varphi_1)(y) -  (\Gamma \varphi_2)(y) \| \leq 
        (\varepsilon + \lambda) \; \| \varphi_1 - \varphi_2 \| \; \| y \|
    \end{align*}
    Rearranging and taking the supremum over all $y \neq 0$, we obtain
    \begin{align*}
        \| \Gamma \varphi_1 - \Gamma \varphi_2 \| = 
        \sup_{y \neq 0} \frac{ \| (\Gamma \varphi_1)(y) - (\Gamma \varphi_2)(y) \| }{\| y \|}
        \leq \frac{\lambda + \varepsilon}{\mu - 2 \varepsilon} \; \| \varphi_1 - \varphi_2 \|.
    \end{align*}
    Observe that $(\lambda + \varepsilon) / (\mu - 2 \varepsilon) < 1$ from the pseudo-hyperbolicity assumptions.
\end{proof}

\subsection{Proof of Lemma~\ref{lemma:potential-growth}} \label{sec:proof-lemma-potential-growth}
\begin{proof} 
    Fix $x = (y, z) \in \reals^d$ and consider the modified point $x' = (y, (\Gamma \varphi)(y))$.
    Since $x' \in \graph(\Gamma \varphi)$, it follows from the invariance property in 
    Eq.~\eqref{eq:graph-invariance-gamma} that $g(x') \in \graph(\varphi)$, or, equivalently, $\pu(g(x')) = \varphi(\pcs(g(x')))$.
    We use this identity to decompose $V_\varphi(g(x))$ as follows 
    \begin{align*}
        V_\varphi(g(x))
        & = \| \pu(g(x)) - \varphi(\pcs(g(x))) \| \\
        & = \|  \pu(g(x)) - \pu(g(x')) + \varphi(\pcs(g(x'))) - \varphi(\pcs(g(x))) \| \\
        & \geq \| \pu(g(x)) - \pu(g(x')) \| - \| \varphi(\pcs(g(x))) - \varphi(\pcs(g(x'))) \|.
    \end{align*}
    We bound the two terms separately.
    For the first one, using $g = (g - T) + T$, we obtain 
    \begin{align*}
        \| \pu(g(x)) - \pu(g(x')) \|
        & \geq \| \pu(T(x)) - \pu(T(x')) \| - \| \pu((g-T)(x)) - \pu((g-T)(x')) \| \\ 
        & \geq \mu \| x - x' \| - \varepsilon \| x - x' \| 
        = (\mu - \varepsilon) \| x - x' \|.
    \end{align*}
    For the second term, using again $g = (g - T) + T$, we have 
    \begin{align*}
        \| \varphi(\pcs(g(x))) - \varphi(\pcs(g(x'))) \| 
        & \leq \Lip(\varphi) \Big( \| \pcs((g-T)(x)) - \pcs((g-T)(x')) \| \\ 
        & \qquad\qquad + \| \pcs(T(x)) - \pcs(T(x'))\| \Big) \\ 
        & \leq \varepsilon \| x - x' \|,
    \end{align*}
    where in the last step we used $ \pcs \circ T = T \circ \pcs$ and $\pcs(x) = \pcs(x')$ to claim that the second term is zero.
    Combining these two, we obtain 
    \begin{align*}
        V_\varphi(g(x)) 
        & \geq (\mu - 2 \varepsilon) \; \| x - x' \| \\ 
        & = (\mu - 2 \varepsilon) \; \| \pu(x) - (\Gamma \varphi)(\pcs(x)) \| \\ 
        & = (\mu - 2 \varepsilon) \; V_{\Gamma \varphi}(x),
    \end{align*}
    which is the claimed growth condition.
    To see that $\mu - 2 \varepsilon > 1$, use~\aref{assumption:small-lipschitz} and $\lambda \geq 1$ from Definition~\ref{def:pseudo-hyperbolicity}.
\end{proof}

\ifbool{BanachAppendix}{

\section{A Banach space of functions}

The following simple fact is used in Section~\ref{section:function-space}.
Recall that a Banach space is a complete normed vector space.

\begin{proposition} \label{prop:banach-space}
Let $\Ecs$ and $\Eu$ be Banach spaces, and define
\begin{align*}
    \calF = \{ \varphi \colon \Ecs \to \Eu \mid \varphi(0) = 0 \textrm{ and } \ \|\varphi\| < \infty \}, && \textrm{ with } &&
    \| \varphi \| = \sup_{y \neq 0} \frac{\|\varphi(y)\|}{\|y\|},
\end{align*}
where $\|\cdot\|$ also denotes the norms on $\Ecs$ and $\Eu$.
Then $(\calF, \|\cdot\|)$ is a Banach space.
\end{proposition}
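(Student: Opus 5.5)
The plan is to verify the normed-space axioms directly and then prove completeness by the standard pointwise-limit argument. Since $\varphi(0)=0$ for all $\varphi\in\calF$, the quantity $\|\varphi\|$ is finite by definition of $\calF$. We have $\|\varphi\|=0$ if and only if $\varphi(y)=0$ for all $y\neq 0$, which together with $\varphi(0)=0$ gives $\varphi=0$. Homogeneity and the triangle inequality follow pointwise from those of the norm on $\Eu$: for each $y\neq 0$, $\|(\varphi_1+\varphi_2)(y)\|/\|y\|\le \|\varphi_1\|+\|\varphi_2\|$, and then we take the supremum. Closure of $\calF$ under linear combinations is immediate from the same bound.

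For completeness, let $(\varphi_n)$ be Cauchy in $\calF$. For each fixed $y\neq0$ we have $\|\varphi_n(y)-\varphi_m(y)\|\le\|\varphi_n-\varphi_m\|\,\|y\|$, so $(\varphi_n(y))$ is Cauchy in the Banach space $\Eu$. Define $\varphi(y)\coloneqq\lim_n\varphi_n(y)$, and $\varphi(0)=0$ (consistent, since $\varphi_n(0)=0$). Given $\epsilon>0$, choose $N$ with $\|\varphi_n-\varphi_m\|\le\epsilon$ for $n,m\ge N$. Then for every $y\neq0$ and $n\ge N$, letting $m\to\infty$ and using continuity of the norm, we get $\|\varphi_n(y)-\varphi(y)\|\le\epsilon\|y\|$. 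Taking the supremum over $y$ gives $\|\varphi_n-\varphi\|\le\epsilon$. In particular $\|\varphi\|\le\|\varphi_N\|+\epsilon<\infty$, so $\varphi\in\calF$, and $\varphi_n\to\varphi$ in norm.

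The only delicate point is the order of operations. The bound must be established uniformly in $y$ before taking the supremum, which is why we pass to the limit in $m$ for fixed $y$ first. Finally, although not asked here, the closedness of $\calF_1$ used in Section~\ref{section:function-space} follows from the same pointwise convergence: $\|\varphi(y)-\varphi(y')\|=\lim_n\|\varphi_n(y)-\varphi_n(y')\|\le\|y-y'\|$. No real obstacle is expected.
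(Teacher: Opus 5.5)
Your proof is correct and follows the paper's argument essentially step by step. Both show the sequence is pointwise Cauchy in $\Eu$, define the pointwise limit, and pass $m \to \infty$ for fixed $y$ to get a bound uniform in $y$ (your $\epsilon$--$N$ version is the paper's $\limsup_{m \geq n}\|\varphi_m - \varphi_n\|$ bound) before taking the supremum. Your justification $\|\varphi\| \leq \|\varphi_N\| + \epsilon$ for membership in $\calF$ is, if anything, slightly more careful than the paper's $\|\varphi\| = \lim_n \|\varphi_n\|$.
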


\begin{proof}
It is straightforward to verify that $\|\cdot\|$ defines a norm on $\calF$.
We must show that $\calF$ is complete with that norm.
To this end, let $(\varphi_n)_{n \ge 1} \subset \calF$ be a Cauchy sequence with respect to $\|\cdot\|$.
For each fixed $y \in \Ecs$, we have
\begin{align*}
    \| \varphi_n(y) - \varphi_m(y) \| = \| (\varphi_n - \varphi_m)(y) \| \le \| \varphi_n - \varphi_m \| \, \| y \|,
\end{align*}
so $( \varphi_n(y) )_{n\ge 1}$ is a Cauchy sequence in $\Eu$.
By completeness of $\Eu$, the sequence $(\varphi_n(y))_{n \geq 1}$ converges: call the limit $\varphi(y)$.
Notice $\varphi(0) = \lim_{n \to \infty} \varphi_n(0) = 0$.
Since $\varphi(y) - \varphi_n(y) = \lim_{m \to \infty} (\varphi_m(y) - \varphi_n(y))$ for any $n \geq 0$, taking norms and dividing by $\| y \|$ for $y \neq 0$, we obtain 
\begin{align}
    \frac{\| \varphi(y) - \varphi_n(y) \|}{\| y \|} = 
    \lim_{m \to \infty} \frac{\| \varphi_m(y) - \varphi_n(y) \|}{\| y \|} 
    \leq \limsup_{m \geq n} \|\varphi_m - \varphi_n\|,
\end{align}
which tends to zero as $n \to \infty$ because $( \varphi_n )$ is Cauchy.
Taking the supremum over $y \neq 0$, we conclude that
$\|\varphi - \varphi_n\| \to 0$.
In particular, $\|\varphi\| = \lim_{n \to \infty} \| \varphi_n \| < \infty$, so $ \varphi \in \calF $.  
Hence every Cauchy sequence in $(\calF, \|\cdot\|)$ converges to a limit in $\calF$, and the space is complete.
\end{proof}

}{
}

\bibliographystyle{abbrvnat}
\bibliography{../bibtex/boumal}

\end{document}